\documentclass[12pt, reqno]{amsart}
\usepackage{amsmath, amsthm, amscd, amsfonts, amssymb, graphicx, color}
\usepackage{mathrsfs}
\usepackage{newtxtext}
\usepackage[bookmarksnumbered, colorlinks, plainpages]{hyperref}
\hypersetup{colorlinks=true,linkcolor=red, anchorcolor=green, citecolor=cyan, urlcolor=red, filecolor=magenta, pdftoolbar=true}

\newtheorem{theorem}{Theorem}[section]
\newtheorem{lemma}[theorem]{Lemma}

\newtheorem{corollary}[theorem]{Corollary}

\theoremstyle{remark}
\newtheorem{remark}[theorem]{\bf{Remark}}
\numberwithin{equation}{section}
\allowdisplaybreaks
\begin{document}

\title [Estimation of $\mathrm{A}$-Berezin number and  $\mathrm{A}$-Berezin norm inequalities \ldots]{Estimation of $\mathrm{A}$-Berezin number and  $\mathrm{A}$-Berezin norm inequalities via Moore-Penrose inverse}
\author[S. Ghosh, S. Saha Mondal, S. Ojha, ]{ Sumon Ghosh, Swastika Saha Mondal, Sarita Ojha }

\address[Ghosh]{Department of Mathematics, Indian Institute of Engineering Science and Technology, Shibpur, 711103, West Bengal, India}
\email{isumonghoshmath@gmail.com}

\address[Saha Mondal]{Department of Mathematics, Raghu Engineering College (Autonomous), Visakhapatnam, Andhra Pradesh- 531162, India}
\email{swastika.sm95@gmail.com}

\address[Ojha]{Department of Mathematics, Indian Institute of Engineering Science and Technology, Shibpur, 711103, West Bengal, India}
\email{sarita.ojha89@gmail.com}

\subjclass[2020]{47A05, 47B15, 47B32}

\keywords{Berezin number; Berezin norm; Reproducing kernel Hilbert space;
Moore-Penrose inverse}

\begin{abstract}
In this article, we establish the $A$-Berezin number and $A$-Berezin norm inequalities for bounded linear operators on a reproducing kernel Hilbert space using the Moore-Penrose inverse. We further extend these inequalities to the case of the sum of two bounded linear operators. As an application, upper bounds of the Berezin number and Berezin norm of block matrices have been discussed via the Moore-Penrose inverse. The inequalities established here offer both refinements and generalizations of previous results.
\end{abstract}

\maketitle

\section{Introduction}

Let $\mathcal{B(H)}$ be the $C^*$-algebra of all bounded linear operators on a complex Hilbert space $\mathcal{H}$ with inner product $\left\langle\cdot,\cdot\right\rangle$ and associated norm $\|\cdot\|$. The usual operator norm of $\mathrm{T}$ is defined as $\|\mathrm{T}\|=\sup\{|\langle \mathrm{T}u,v\rangle|:u,v\in\mathcal{H},~\|u\|=\|v\|=1\}$. Let $\mathcal{B(H)}^+$ be the set of all positive operators of $\mathcal{B(H)}$, i.e., $\mathcal{B(H)}^+=\{\mathrm{T}\in\mathcal{B(H)}:\langle \mathrm{T}u,u\rangle\geq 0 \  ~\forall~ u\in\mathcal{H}\}$. For $\mathrm{T}\in\mathcal{B(H)}$, $\mathrm{T}^*$ denotes the adjoint of $\mathrm{T}$, and $|\mathrm{T}|$ denotes the positive operator $\sqrt{\mathrm{T}^*\mathrm{T}}$. Throughout the article, the null space of an operator $\mathrm{T}$ is denoted by $\mathcal{N}(\mathrm{T})$, and its range by $\mathcal{R}(\mathrm{T})$. $\overline{\mathcal{R}(\mathrm{T})}$ is the closure of $\mathcal{R}(\mathrm{T})$ with respect to the usual norm of $\mathcal{H}$. 

For a given $\mathrm{A}\in \mathcal{B(H)}^+$, consider a semi-inner product $\langle \cdot,\cdot \rangle_\mathrm{A}$ on $\mathcal{H}$ defined by $\langle u,v \rangle_\mathrm{A} =\langle \mathrm{A}u,v \rangle, ~\mbox{for all}~ u,v\in\mathcal{H}$. Clearly, the induced seminorm is given by $\|u\|_\mathrm{A}=\langle u,u \rangle_\mathrm{A}^\frac{1}{2}=\|\mathrm{A}^\frac{1}{2}u\|$ for every $u\in\mathcal{H}$. The vector space $\mathcal{H}$  endowed with the semi-inner product $\langle \cdot,\cdot\rangle_\mathrm{A}$ is called a semi-Hilbertian space. It can be shown that $\|\cdot\|_\mathrm{A}$ is a norm on $\mathcal{H}$ if and only if $\mathrm{A}$ is one-to-one, and the semi-Hilbertian space $(\mathcal{H},\|\cdot\|_\mathrm{A})$ is complete if and only if $\mathcal{R}(\mathrm{A})=\overline{\mathcal{R}(\mathrm{A})}$. For $\mathrm{T}\in\mathcal{B(H)}$, if there exists a constant $c>0$ such that $\|\mathrm{T}u\|_\mathrm{A}\leq c\|u\|_\mathrm{A}$ for all $u\in\overline{\mathcal{R}(\mathrm{A})}$, then the $\mathrm{A}$-operator semi-norm of $\mathrm{T}$, denoted by $\|\mathrm{T}\|_\mathrm{A}$, is defined as
\begin{equation*}
\|\mathrm{T}\|_\mathrm{A}=\underset{u\in\overline{\mathcal{R}(\mathrm{A})},\ u\neq 0}{\sup}\frac{\|\mathrm{T}u\|_\mathrm{A}}{\|u\|_\mathrm{A}}.  
\end{equation*}
For an operator $\mathrm{T}\in\mathcal{B(H)}$, an operator $\mathrm{S}\in\mathcal{B(H)}$ is referred to as an $\mathrm{A}$-adjoint of $\mathrm{T}$ if $\langle \mathrm{T}u,v \rangle_\mathrm{A}=\langle u,\mathrm{S}v \rangle_\mathrm{A}$ for all $u,v\in\mathcal{H}$. Generally, the existence of an $\mathrm{A}$-adjoint operator is not guaranteed. By Douglas Theorem \cite{Douglas-majorization}, the set of all operators in $\mathcal{B(H)}$, which admit an $\mathrm{A}$-adjoint is denoted by $\mathcal{B}_\mathrm{A}\mathcal{(H)}$, and is defined by
\begin{equation*}
 \mathcal{B}_\mathrm{A}\mathcal{(H)}=\{\mathrm{T}\in\mathcal{B}\mathcal{(H)}: \mathcal{R}(\mathrm{T}^*\mathrm{A}) \subseteq \mathcal{R}(\mathrm{A})\}.   
\end{equation*}
If $\mathrm{T}\in\mathcal{B}_\mathrm{A}\mathcal{(H)}$, then by Douglas Theorem, the equation $\mathrm{AY}=\mathrm{T}^*\mathrm{A}$ has a unique solution, denoted by $\mathrm{T}^{\#_\mathrm{A}}$, which satisfies $\mathcal{R}(\mathrm{T}^{\#_\mathrm{A}})\subseteq \overline{\mathcal{R}(\mathrm{A})}$. The set $\mathcal{B}_{\mathrm{A}^\frac{1}{2}}\mathcal{(H)}$ represents all operators that admit an $\mathrm{A}^\frac{1}{2}$-adjoint. Again, by Douglas Theorem, this set can be characterized as
\begin{equation*}
  \mathcal{B}_{\mathrm{A}^\frac{1}{2}}\mathcal{(H)}=\{\mathrm{T}\in\mathcal{B(H)}:~ \exists~\lambda>0~\mbox{such that}~\|\mathrm{T}u\|_\mathrm{A}\leq\lambda\|u\|_\mathrm{A}~\forall ~u\in\mathcal{H}\}.  
\end{equation*}
An operator $\mathrm{T}\in\mathcal{B}_{\mathrm{A}^\frac{1}{2}}\mathcal{(H)}$ is known as $\mathrm{A}$-bounded operator. The inclusions $\mathcal{B}_\mathrm{A}\mathcal{(H)}\subseteq \mathcal{B}_{\mathrm{A}^\frac{1}{2}}\mathcal{(H)}\subseteq\mathcal{B}\mathcal{(H)}$
hold with equality if $\mathrm{A}$ is one-to-one and has closed range (see \cite{Arias-Metric,Arias-Partial,Feki-Spectral}).

Let $\mathcal{CR(H)}$ be the set defined by $\mathcal{CR(H)}=\{\mathrm{T}\in\mathcal{B(H)}:\mathcal{R}(\mathrm{T}) ~\text{is closed}\}$. If $\mathrm{T}\in \mathcal{CR(H)}$, then there exists a unique $\mathrm{T}^\dag\in\mathcal{B(H)}$ which satisfies the following relations:
\begin{equation*}
    (\mbox{a})~\mathrm{TT}^\dag \mathrm{T}=\mathrm{T},~(\mbox{b})~\mathrm{T}^\dag \mathrm{TT}^\dag=\mathrm{T}^\dag,~(\mbox{c})~(\mathrm{TT}^\dag)^*=\mathrm{TT}^\dag,~(\mbox{d})~(\mathrm{T}^\dag \mathrm{T})^*=\mathrm{T}^\dag \mathrm{T}.
\end{equation*}
 Here, the operator $\mathrm{T}^\dag$ is called the Moore-Penrose inverse of $\mathrm{T}$. For a detailed study of Moore-Penrose inverse, we refer to \cite{ben2002moore,moore1920reciprocal,penrose1955generalized}. It is well-known that $\mathrm{T}^{\#_\mathrm{A}}=\mathrm{A}^\dag \mathrm{T}^* \mathrm{A}$. 

Let $\mathrm{A}\in\mathcal{B(H)}^+$. An operator $\mathrm{T}^{\dag_\mathrm{A}}\in \mathcal{B(H)}$ is called an $\mathrm{A}$-generalized inverse (see \cite{Arias-A-Partial}) of $\mathrm{T}\in\mathcal{CR(H)}$ if it satisfies
$$ \mathrm{TT}^{\dag_\mathrm{A}} \mathrm{T}=\mathrm{T},~\mathrm{T}^{\dag_\mathrm{A}} \mathrm{TT}^{\dag_\mathrm{A}}=\mathrm{T}^{\dag_\mathrm{A}},~\mathrm{ATT}^{\dag_\mathrm{A}}=(\mathrm{TT}^{\dag_\mathrm{A}})^*\mathrm{A},~\mathrm{AT}^{\dag_\mathrm{A}} \mathrm{T}=(\mathrm{T}^{\dag_\mathrm{A}} \mathrm{T})^*\mathrm{A}.$$
When $\mathrm{A}=\mathrm{I}$, the $\mathrm{A}$-generalized inverse $\mathrm{T}^{\dag_\mathrm{A}}$ reduces to the standard Moore–Penrose inverse $\mathrm{T}^\dag$. In general, the existence of an $\mathrm{A}$-generalized inverse is not guaranteed for all closed-range operators unless $\mathrm{A}$ is invertible. Given a closed subspace $V$ of $\mathcal{H}$, the pair $(\mathrm{A}, V)$ is said to be compatible if the set
$\mathcal{P}(\mathrm{A}, V):= \{\mathrm{Q} \in\mathcal{B(H)} : \mathrm{Q}^2 = \mathrm{Q},~ \mathcal{R}(\mathrm{Q}) = V,~ \mathrm{AQ} = \mathrm{Q}^*\mathrm{A}\}$ is nonempty. Equivalently, $(\mathrm{A}, V)$ is compatible if and only if $V \oplus V^{\perp_\mathrm{A}} = \mathcal{H}$ where $V^{\perp_\mathrm{A}} = \{\xi\in \mathcal{H} : \langle \xi,\eta\rangle_\mathrm{A}=0 ~\forall~ \eta\in V \}$. For a compatible pair $(\mathrm{A}, V)$, the set $\mathcal{P}(\mathrm{A}, V)$ may contain either a single element or infinitely many elements. In particular, $\mathcal{P}(\mathrm{A}, V)$ has a unique element if and only if $V \cap \mathcal{N}(\mathrm{A}) = \{0\}$. For more details on compatibility, see \cite{Corach-classification}.

Let $X$ be a nonempty set. A reproducing kernel Hilbert space $\mathcal{H}=\mathcal{H}(X)$ is a Hilbert space of complex valued functions on the set $X$ with the property that for every $x\in X$, the corresponding linear evaluation functional on $\mathcal{H}$ given by $\phi\rightarrow\phi(x)$, is continuous (see \cite{paulsen2016introduction}). By Riesz Representation theorem for each $x\in X$, there exists a unique element $k_x\in\mathcal{H}$ such that $\phi(x)=\left\langle\phi,k_x\right\rangle$ for all $\phi\in\mathcal{H}$. The collection $\{k_x:x\in X\}$ is the set of all reproducing kernels of $\mathcal{H}$ and $\{\hat{k}_x=k_x/\|k_x\|:x\in X\}$ is the set of all normalized reproducing kernels of $\mathcal{H}$. 

Let $\mathrm{T}\in\mathcal{B(H)}$ where $\mathcal{H}$ is a reproducing kernel Hilbert space. The Berezin transform of $\mathrm{T}$ (see \cite{berezin1972covariant,berezin1974quantization}) is the function $\Tilde{\mathrm{T}}$ on $X$ defined by 
 \begin{equation*}
  \Tilde{\mathrm{T}}(x)=\left\langle \mathrm{T}\hat{k}_x,\hat{k}_x\right\rangle~\text{for all}~x\in X.    
 \end{equation*}
The Berezin set (or range) corresponding to the operator $\mathrm{T}$ was introduced in \cite{karaev2013Reproducing} as
\begin{equation*}
    \textbf{Ber}(\mathrm{T})=\{\Tilde{\mathrm{T}}(x):x\in X\}.
\end{equation*}
The Berezin number and Berezin norm of $\mathrm{T}$ (see \cite{bakherad2020new,karaev2006berezin}) are denoted by $\textbf{ber}(\mathrm{T})$ and $\|\mathrm{T}\|_\textbf{Ber}$, respectively, and are defined as
\begin{eqnarray*}
    \textbf{ber}(\mathrm{T})&=&\sup\{|\Tilde{\mathrm{T}}(x)|:x\in X\}\\
\mbox{and } \ \|\mathrm{T}\|_\textbf{ber} &=& \sup\left\{\left|\left\langle \mathrm{T}\hat{k}_x,\hat{k}_y\right\rangle\right|:x,y\in X\right\}.
\end{eqnarray*} 
The Berezin number of an operator $\mathrm{T}\in\mathcal{B(H)}$ satisfies the following properties:
\begin{enumerate}
    \item $\mathbf{ber}(\mathrm{T}) \leq \|\mathrm{T}\|_\textbf{ber}\leq\|\mathrm{T}\|$.
    \item $\mathbf{ber}(\alpha \mathrm{T})=|\alpha|\mathbf{ber}(\mathrm{T})$ for all $\alpha\in\mathbb{C}$.
    \item $\mathbf{ber}(\mathrm{T}+\mathrm{S})\leq\mathbf{ber}(\mathrm{T})+\mathbf{ber}(\mathrm{S})$ for all $\mathrm{T,S}\in\mathcal{B(H)}$.
\end{enumerate}
If $\mathrm{T}\in\mathcal{B(H)}^+$, then $\|\mathrm{T}\|_\textbf{ber}=\textbf{ber}(\mathrm{T})$ (see \cite{bhunia2023inequalities}). Over the years, several mathematicians have studied the Berezin number and Berezin norm inequalities of reproducing kernel Hilbert space operators (see \cite{bakherad2019berezin,bakherad2020complete,bhunia2023new,garayev2021inequalities,guesba2023some,hajmohamadi2020improvements,majee2023numerical,sen2022berezin,yamanci2017numerical,yamanci2022further}).

For $\mathrm{T}\in\mathcal{B(H)}$, the $\mathrm{A}$-Berezin symbol of $\mathrm{T}$ is the function $\Tilde{\mathrm{T}}_\mathrm{A}:X\rightarrow \mathbb{C}$ defined by $\Tilde{\mathrm{T}}_\mathrm{A}(x)=\langle \mathrm{T}\hat{k}_x', \hat{k}_x'\rangle_\mathrm{A}$, where $\hat{k}_x'$ denotes the $\mathrm{A}$-normalized reproducing kernel, i.e., $\hat{k}_x'=\frac{k_x}{\|k_x\|_\mathrm{A}}$ with $\|k_x\|_\mathrm{A}\neq 0$. The $\mathrm{A}$-Berezin set, $\mathrm{A}$-Berezin number and $\mathrm{A}$-Berezin norm of $\mathrm{T}$ are defined as follows: 
\begin{eqnarray*}
   \textbf{Ber}_\mathrm{A}(\mathrm{T})&=&\{\langle \mathrm{T}\hat{k}_x',\hat{k}_x'\rangle_\mathrm{A} :x\in X\},\\
   \textbf{ber}_\mathrm{A}(\mathrm{T})&=&\underset{x\in X}{\sup}|\langle \mathrm{T}\hat{k}_x',\hat{k}_x'\rangle_\mathrm{A}|,\\
   \|\mathrm{T}\|_{\textbf{ber}_\mathrm{A}}&=&\underset{x,y\in X}{\sup}|\langle \mathrm{T}\hat{k}_x',\hat{k}_y'\rangle_\mathrm{A}|.
\end{eqnarray*}
For details see \cite{Conde-Berezin,Gurdal-A-Berezin,Qiu26Spectralestimates}. For $\mathrm{A}=\mathrm{I}$, the above notions reduce respectively to the Berezin range, Berezin number, and Berezin norm.

Let $X_1,X_2$ be two nonempty sets and $\mathcal{H}_i=\mathcal{H}(X_i)$ be reproducing kernel Hilbert spaces on $X_i$ for $i = 1, 2$. Consider the direct sum $\mathcal{H}=\mathcal{H}_1\oplus \mathcal{H}_2$, then $\mathcal{H}$ is an reproducing kernel Hilbert space on the nonempty set $X_1\times X_2$. Every operator $\mathrm{T}\in\mathcal{B}(\mathcal{H})$ has a $2\times 2$ operator matrix representation 
\begin{eqnarray*}
    &&\mathrm{T}=\begin{bmatrix}
    \mathrm{P}&\mathrm{Q}\\
    \mathrm{R}&\mathrm{S}
\end{bmatrix};~\mathrm{P}\in\mathcal{B}(\mathcal{H}_1),~\mathrm{Q}\in\mathcal{B}(\mathcal{H}_2,\mathcal{H}_1),~\mathrm{R}\in\mathcal{B}(\mathcal{H}_1,\mathcal{H}_2) ~\text{and}~ \mathrm{S}\in\mathcal{B}(\mathcal{H}_2),\end{eqnarray*}
where $\mathcal{B}(\mathcal{H}_i,\mathcal{H}_j)$ is the collection of all bounded linear operators from $\mathcal{H}_i$ to $\mathcal{H}_j$. Over time, mathematicians have become increasingly interested in determining bounds for the Berezin number for operator matrices (see \cite{positivity of,On the Berezin,FURTHER BEREZIN NUMBER}).

In this article, we derive the $\mathrm{A}$-Berezin number and the $\mathrm{A}$-Berezin norm inequalities as well as the Berezin number and the Berezin norm inequalities for bounded linear operators on reproducing kernel Hilbert spaces using the Moore-Penrose inverse. As an application, we explicitly give some bounds for the Berezin number and the Berezin norm inequalities for $2 \times 2$ 
block matrices using Moore-Penrose inverse. With the help of examples, we show that the resulting inequalities yield sharper bounds than previously established results.

\section*{Prerequisites}
In this section, we present the following lemmas that will be used to establish our
results in this article. Throughout this article, we assume that $\mathrm{A}\in\mathcal{B(H)}^+$.

In \cite{pecaric2005mond}, Pečarić et al. have proved the following result. 
\begin{lemma}\label{(A)^r><A^r}
    If $\mathrm{T}\in\mathcal{B(H)}$ is positive and $u\in\mathcal{H}$ with $\|u\|=1$, then
    \begin{enumerate}
        \item $\left\langle \mathrm{T}u,u\right\rangle^r\leq\left\langle \mathrm{T}^ru,u\right\rangle$ for $r\geq1,$
        \item $\left\langle \mathrm{T}u,u\right\rangle^r\geq\left\langle \mathrm{T}^ru,u\right\rangle$ for $0<r\leq1.$
    \end{enumerate}
\end{lemma}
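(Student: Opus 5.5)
We would prove this McCarthy-type inequality through the spectral theorem together with Jensen's inequality for the spectral measure of $\mathrm{T}$ at the unit vector $u$. Since $\mathrm{T}$ is positive, its spectrum $\sigma(\mathrm{T})$ is a compact subset of $[0,\infty)$, and $\mathrm{T}=\int_{\sigma(\mathrm{T})}\lambda\,dE(\lambda)$ for a projection-valued measure $E$. For the fixed unit vector $u$, the set function $\mu_u(\Delta)=\langle E(\Delta)u,u\rangle$ is a positive Borel measure with $\mu_u(\sigma(\mathrm{T}))=\|u\|^2=1$, so it is a probability measure. The functional calculus then gives, for every $r>0$,
\begin{equation*}
\langle \mathrm{T}u,u\rangle=\int_{\sigma(\mathrm{T})}\lambda\,d\mu_u(\lambda),\qquad \langle \mathrm{T}^ru,u\rangle=\int_{\sigma(\mathrm{T})}\lambda^r\,d\mu_u(\lambda).
\end{equation*}

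For part (1), with $r\ge1$, the function $f(\lambda)=\lambda^r$ is convex on $[0,\infty)$. It is continuous and bounded on the compact set $\sigma(\mathrm{T})$, so all the integrals are finite. Jensen's inequality for the probability measure $\mu_u$ then gives
\begin{equation*}
\left(\int\lambda\,d\mu_u\right)^r\le\int\lambda^r\,d\mu_u,
\end{equation*}
which is exactly $\langle \mathrm{T}u,u\rangle^r\le\langle \mathrm{T}^ru,u\rangle$.

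For part (2), with $0<r\le1$, the function $\lambda^r$ is concave on $[0,\infty)$, so Jensen's inequality reverses and yields $\langle \mathrm{T}u,u\rangle^r\ge\langle \mathrm{T}^ru,u\rangle$. Alternatively, part (2) follows from part (1). Set $\mathrm{S}=\mathrm{T}^r\ge0$ and apply (1) to $\mathrm{S}$ with exponent $1/r\ge1$. This gives $\langle \mathrm{T}^ru,u\rangle^{1/r}\le\langle \mathrm{T}u,u\rangle$, and raising both sides to the power $r$ preserves the order since both sides are nonnegative.

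The only delicate point is making the spectral-measure representation rigorous, in particular that $\mu_u$ is a genuine probability measure. This uses $\|u\|=1$ and $E(\sigma(\mathrm{T}))=\mathrm{I}$.

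If one prefers to avoid Jensen's inequality, there is an elementary route. Use the tangent-line inequality $\lambda^r\ge t^r+rt^{r-1}(\lambda-t)$ for $r\ge1$, $\lambda\ge0$, $t>0$. Pass from scalars to operators via the functional calculus to get
\begin{equation*}
\mathrm{T}^r\ge t^r\mathrm{I}+rt^{r-1}(\mathrm{T}-t\mathrm{I}).
\end{equation*}
Take the inner product with $u$ and choose $t=\langle \mathrm{T}u,u\rangle$. The degenerate case $t=0$ is handled separately: then $\mathrm{T}^{1/2}u=0$, so $\mathrm{T}^ru=0$ and both sides vanish. For $0<r\le1$ the scalar inequality reverses by concavity, and the same argument gives part (2).
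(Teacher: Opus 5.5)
The paper gives no proof of this lemma. It quotes it (the H\"older--McCarthy inequality) from Pe\v{c}ari\'c et al., so there is no in-paper argument to compare against. Your proof is correct.

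The spectral-measure route is the standard textbook argument. Here $\mu_u(\Delta)=\langle E(\Delta)u,u\rangle$ is a probability measure on $\sigma(\mathrm{T})\subseteq[0,\|\mathrm{T}\|]$, and Jensen's inequality is applied to the convex (resp.\ concave) function $\lambda^r$.

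Your supporting-line argument is essentially the Mond--Pe\v{c}ari\'c technique the cited source uses to prove $f(\langle \mathrm{T}u,u\rangle)\le\langle f(\mathrm{T})u,u\rangle$ for convex $f$. It has the advantage of needing only the continuous functional calculus and the fact that $h\ge 0$ on $\sigma(\mathrm{T})$ implies $h(\mathrm{T})\ge 0$, with no measure theory. The reduction of (2) to (1) via $\mathrm{S}=\mathrm{T}^r$, $(\mathrm{T}^r)^{1/r}=\mathrm{T}$ and exponent $1/r\ge 1$ is also valid.

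The one spot worth tightening is the degenerate case $\langle \mathrm{T}u,u\rangle=0$ in the tangent-line proof.
\begin{itemize}
\item In part (1) it is trivial, since $\langle \mathrm{T}^ru,u\rangle\ge 0$.
\item In part (2) you genuinely need $\mathrm{T}^ru=0$ for every $r>0$, including $0<r<\tfrac12$, where you cannot write $\mathrm{T}^r=\mathrm{T}^{r-1/2}\mathrm{T}^{1/2}$ with a bounded factor. This follows from $\mathcal{N}(\mathrm{T}^{1/2})=\mathcal{N}(\mathrm{T})=\mathcal{R}(E(\{0\}))$ together with $0^r=0$.
\item Alternatively, you can sidestep the issue entirely by proving (2) through your reduction to (1), whose degenerate case is trivial.
\end{itemize}
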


In \cite{buzano1971generalizzazione}, Buzano has established the following inequality.
\begin{lemma}\label{Buzano}
    If $u,v,w\in\mathcal{H}$ and $\|w\|=1$, then the following inequality holds:
    $$\left|\left\langle u,w\right\rangle\left\langle w,v\right\rangle\right|\leq\frac{1}{2}\left( \left|\left\langle u,v\right\rangle\right|+\left\langle u,u\right\rangle^{\frac{1}{2}}\left\langle v,v\right\rangle^{\frac{1}{2}}\right).$$
\end{lemma}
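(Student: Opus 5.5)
The plan is to reduce Buzano's inequality (Lemma~\ref{Buzano}) to the Cauchy--Schwarz inequality after splitting $v$ along the unit vector $w$. Since both sides are homogeneous in $u$ and in $v$, and the case where $u$ or $v$ is $0$ is trivial, I may keep $u,v$ arbitrary and work directly with the orthogonal decomposition relative to $w$. Introduce the operator $\mathrm{S}=2\,w\otimes w-\mathrm{I}$, i.e.\ $\mathrm{S}z=2\langle z,w\rangle w-z$. Because $\|w\|=1$, the map $\mathrm{P}z=\langle z,w\rangle w$ is the orthogonal projection onto $\operatorname{span}\{w\}$, so $\mathrm{S}=2\mathrm{P}-\mathrm{I}$ is a self-adjoint unitary (a reflection). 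In particular $\|\mathrm{S}z\|=\|z\|$ for all $z$.

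The key identity is
\[
2\langle u,w\rangle\langle w,v\rangle=\langle u,v\rangle+\langle \mathrm{S}u,v\rangle ,
\]
obtained by expanding $\langle \mathrm{S}u,v\rangle=2\langle u,w\rangle\langle w,v\rangle-\langle u,v\rangle$. Taking absolute values and using the triangle inequality gives
\[
2|\langle u,w\rangle\langle w,v\rangle|\le |\langle u,v\rangle|+|\langle \mathrm{S}u,v\rangle|.
\]
Then Cauchy--Schwarz together with $\|\mathrm{S}u\|=\|u\|$ gives $|\langle \mathrm{S}u,v\rangle|\le\|u\|\|v\|=\langle u,u\rangle^{1/2}\langle v,v\rangle^{1/2}$. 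Dividing by $2$ yields the stated inequality.

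The only step needing care is verifying that $\mathrm{S}$ is an isometry. To check it, write $z=\mathrm{P}z+(\mathrm{I}-\mathrm{P})z$, so that $\mathrm{S}z=\mathrm{P}z-(\mathrm{I}-\mathrm{P})z$. The two pieces are orthogonal, so Pythagoras gives $\|\mathrm{S}z\|^2=\|\mathrm{P}z\|^2+\|(\mathrm{I}-\mathrm{P})z\|^2=\|z\|^2$. This uses $\|w\|=1$ in an essential way. Everything else is direct expansion, so I expect no real obstacle.
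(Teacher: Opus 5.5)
The paper gives no proof of this lemma; it is quoted from Buzano's 1971 note and used as a black box. So there is no argument of the paper's to compare against, and your proof has to stand on its own. It does: it is correct and complete.

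The identity $\langle \mathrm{S}u,v\rangle=2\langle u,w\rangle\langle w,v\rangle-\langle u,v\rangle$ is right. So is your Pythagoras argument that $\mathrm{S}=2\mathrm{P}-\mathrm{I}$ is an isometry, which is exactly where $\|w\|=1$ enters, and Cauchy--Schwarz then finishes the job. This is the well-known short reflection proof, usually credited to Fujii and Kubo, rather than Buzano's original argument.

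The route gives you two things beyond the statement:
\begin{itemize}
\item Before the triangle inequality you already have the sharper estimate $\bigl|\langle u,w\rangle\langle w,v\rangle-\tfrac12\langle u,v\rangle\bigr|\le\tfrac12\|u\|\|v\|$, and the stated bound follows at once.
\item The proof transfers essentially verbatim to the semi-inner product $\langle\cdot,\cdot\rangle_\mathrm{A}$. With $\mathrm{P}z=\langle z,w\rangle_\mathrm{A}w$ and $\|w\|_\mathrm{A}=1$, one still has $\langle \mathrm{P}z,(\mathrm{I}-\mathrm{P})z\rangle_\mathrm{A}=0$, hence $\|\mathrm{S}z\|_\mathrm{A}=\|z\|_\mathrm{A}$, and Cauchy--Schwarz holds for semi-inner products. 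So the same argument also proves Lemma~\ref{Gen-Buzano}, which the paper likewise only cites.
\end{itemize}

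Your opening remark about homogeneity and the zero case is unnecessary, since the argument never normalizes $u$ or $v$.
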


Later on, Saddi \cite{Saddi A-Normal} has generalized the Buzano's inequality as follows:
\begin{lemma}\label{Gen-Buzano}
For $u,v,w\in\mathcal{H}$ and $\|w\|_\mathrm{A}=1$. Then
    \begin{equation*}
        |\langle u,w\rangle_\mathrm{A}\langle w,v\rangle_\mathrm{A}|\leq\frac{1}{2}(|\langle u,v\rangle_\mathrm{A}|+\|u\|_\mathrm{A}\|v\|_\mathrm{A}).
    \end{equation*}
\end{lemma}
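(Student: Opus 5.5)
The plan is to reduce the statement to the classical Buzano inequality (Lemma \ref{Buzano}) applied to the genuine Hilbert space $(\mathcal{H},\langle\cdot,\cdot\rangle)$, through the square root $\mathrm{A}^{\frac{1}{2}}$. The key observation is that for all $u,v\in\mathcal{H}$,
$$\langle u,v\rangle_\mathrm{A}=\langle \mathrm{A}u,v\rangle=\langle \mathrm{A}^{\frac{1}{2}}u,\mathrm{A}^{\frac{1}{2}}v\rangle .$$
Consequently $\|u\|_\mathrm{A}=\|\mathrm{A}^{\frac{1}{2}}u\|$. Every quantity in the statement can therefore be rewritten as an ordinary inner product of the vectors $u'=\mathrm{A}^{\frac{1}{2}}u$, $v'=\mathrm{A}^{\frac{1}{2}}v$ and $w'=\mathrm{A}^{\frac{1}{2}}w$.

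First, the hypothesis $\|w\|_\mathrm{A}=1$ translates exactly into $\|w'\|=1$. Second, the left-hand side becomes
$$|\langle u,w\rangle_\mathrm{A}\langle w,v\rangle_\mathrm{A}|=|\langle u',w'\rangle\langle w',v'\rangle|.$$
Third, the right-hand side becomes
$$\tfrac12\bigl(|\langle u',v'\rangle|+\langle u',u'\rangle^{\frac12}\langle v',v'\rangle^{\frac12}\bigr),$$
since $|\langle u,v\rangle_\mathrm{A}|=|\langle u',v'\rangle|$, $\|u\|_\mathrm{A}=\|u'\|$ and $\|v\|_\mathrm{A}=\|v'\|$. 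Applying Lemma \ref{Buzano} to $u',v',w'$ with $\|w'\|=1$ then gives the claim at once.

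The only point needing care is the existence and properties of $\mathrm{A}^{\frac{1}{2}}$. Since $\mathrm{A}\in\mathcal{B(H)}^+$, the positive square root exists and is self-adjoint, which justifies moving one factor across the inner product. This step is standard, so I expect no real obstacle.

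If one preferred to avoid Lemma \ref{Buzano}, an alternative route is to prove Buzano's inequality directly in the semi-inner-product setting. One would write $u=\langle u,w\rangle_\mathrm{A}w+(u-\langle u,w\rangle_\mathrm{A}w)$ and use the $\mathrm{A}$-Cauchy–Schwarz inequality, which is valid for semi-inner products. The operator $v\mapsto 2\langle v,w\rangle_\mathrm{A}w-v$ is an $\mathrm{A}$-isometry, and this would give $|\langle u,w\rangle_\mathrm{A}\langle w,v\rangle_\mathrm{A}|\le\frac12(|\langle u,v\rangle_\mathrm{A}|+\|u\|_\mathrm{A}\|v\|_\mathrm{A})$ by the usual argument. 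The reduction via $\mathrm{A}^{\frac12}$ is cleaner, however, and is the route I would take.
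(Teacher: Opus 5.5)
The paper does not prove this lemma itself; it quotes it from Saddi, so there is no in-paper argument to compare with. Your proof is correct and complete.

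The identity
\[
\langle u,v\rangle_\mathrm{A}=\langle \mathrm{A}^{\frac12}u,\mathrm{A}^{\frac12}v\rangle
\]
holds because $\mathrm{A}^{\frac12}$ is self-adjoint. It turns every quantity in the statement into the corresponding ordinary-Hilbert-space quantity for $u'=\mathrm{A}^{\frac12}u$, $v'=\mathrm{A}^{\frac12}v$, $w'=\mathrm{A}^{\frac12}w$, with $\|w'\|=1$. Lemma \ref{Buzano} then finishes the proof. No assumption such as injectivity of $\mathrm{A}$ or closedness of $\mathcal{R}(\mathrm{A})$ is needed, since the classical inequality holds for all vectors, including those with $\|u\|_\mathrm{A}=0$.

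Your alternative reflection route is also sound, but the decomposition $u=\langle u,w\rangle_\mathrm{A}w+(u-\langle u,w\rangle_\mathrm{A}w)$ you mention plays no role in it. Set $c=\langle v,w\rangle_\mathrm{A}$. One checks directly that $\|2cw-v\|_\mathrm{A}=\|v\|_\mathrm{A}$ and
\[
\langle u,2cw-v\rangle_\mathrm{A}=2\langle u,w\rangle_\mathrm{A}\langle w,v\rangle_\mathrm{A}-\langle u,v\rangle_\mathrm{A}.
\]
The Cauchy--Schwarz inequality for semi-inner products followed by the triangle inequality then gives the claim.

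The two routes differ mildly. The reflection argument works for any positive semidefinite sesquilinear form without needing a square root. The $\mathrm{A}^{\frac12}$ reduction is shorter here, since Lemma \ref{Buzano} is already stated in the paper.
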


In \cite{Al-Twaijry-Generalized}, Altwaijry et al. have refined the well-known Cauchy-Schwarz inequality, which is given as follows:
\begin{lemma}\label{Gen-Cauchy-Sch}
    Consider $u,v\in\mathcal{H}$ and $\epsilon\in[0,1]$. Then
    \begin{equation*}
        |\langle u,v\rangle_\mathrm{A}|\leq\sqrt{\epsilon\|u\|_\mathrm{A}^2\|v\|_\mathrm{A}^2+(1-\epsilon)|\langle u,v\rangle_\mathrm{A}|\|u\|_\mathrm{A}\|v\|_\mathrm{A}}\leq \|u\|_\mathrm{A}\|v\|_\mathrm{A}.
    \end{equation*}
\end{lemma}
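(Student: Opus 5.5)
The idea is to view Lemma~\ref{Gen-Cauchy-Sch} as a convex-combination refinement of the $\mathrm{A}$-Cauchy--Schwarz inequality. Both inequalities should follow from the $\mathrm{A}$-Cauchy--Schwarz inequality
\[
|\langle u,v\rangle_\mathrm{A}|\le\|u\|_\mathrm{A}\|v\|_\mathrm{A},
\]
combined with elementary manipulations of nonnegative reals. We first record why this inequality holds for the semi-inner product: $\langle u,v\rangle_\mathrm{A}=\langle \mathrm{A}^{\frac12}u,\mathrm{A}^{\frac12}v\rangle$, so the ordinary Cauchy--Schwarz inequality in $\mathcal{H}$, applied to $\mathrm{A}^{\frac12}u$ and $\mathrm{A}^{\frac12}v$, gives exactly the displayed estimate. (Alternatively, one can run the usual discriminant argument on $\|u+\lambda v\|_\mathrm{A}^2\ge 0$, which only uses positive semidefiniteness.)

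Next, set $a=|\langle u,v\rangle_\mathrm{A}|$ and $b=\|u\|_\mathrm{A}\|v\|_\mathrm{A}$, so that $0\le a\le b$. The claimed chain then reads
\[
a\le\sqrt{\epsilon b^2+(1-\epsilon)ab}\le b .
\]
For the first inequality, since $a\ge 0$ it suffices to compare squares:
\[
\epsilon b^2+(1-\epsilon)ab-a^2=\epsilon(b^2-a^2)+(1-\epsilon)a(b-a)=(b-a)\bigl(\epsilon(b+a)+(1-\epsilon)a\bigr)\ge 0,
\]
because $b-a\ge 0$, $\epsilon\in[0,1]$ and $a,b\ge 0$. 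For the second inequality,
\[
b^2-\epsilon b^2-(1-\epsilon)ab=(1-\epsilon)b(b-a)\ge 0 .
\]
Taking square roots, which is monotone on $[0,\infty)$, finishes the proof.

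The only point needing care is the degenerate case where $\mathrm{A}$ is not injective, so that $\|\cdot\|_\mathrm{A}$ is merely a seminorm. If $\|u\|_\mathrm{A}=0$ or $\|v\|_\mathrm{A}=0$, then $b=0$, so $a=0$ as well and every term in the chain is zero. I expect no real obstacle here: the whole argument rests on the $\mathrm{A}$-Cauchy--Schwarz inequality for a semi-inner product, which is justified via $\mathrm{A}^{\frac12}$ as above.
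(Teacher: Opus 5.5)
Your proof is correct and complete. The paper gives no proof of its own; it imports the lemma from \cite{Al-Twaijry-Generalized}, so there is no internal argument to compare with.

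Your route is the standard one. You transfer Cauchy--Schwarz to the semi-inner product via $\mathrm{A}^{\frac12}u$ and $\mathrm{A}^{\frac12}v$, and then prove the scalar chain for $0\le a\le b$. That scalar step shortens to the single line
\[
a^2=\epsilon a^2+(1-\epsilon)a\cdot a\le\epsilon b^2+(1-\epsilon)ab\le\epsilon b^2+(1-\epsilon)b^2=b^2 .
\]
The degenerate case $b=0$ needs no separate treatment, since this algebra already covers it.
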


In a recent work, Sababheh et al. \cite{sababheh2024numerical} have provided the following lemma involving the Moore-Penrose inverse of an operator.
\begin{lemma}\label{Moore-Penrose inverse}
    Let $\mathrm{T}\in\mathcal{CR(H)}$ and $u,v\in\mathcal{H}$. Then
    $$\left|\left\langle \mathrm{T}u,v\right\rangle\right|^2\leq \left\langle|\mathrm{T}|^2u,u\right\rangle\left\langle \mathrm{TT}^\dag v,v\right\rangle.$$
\end{lemma}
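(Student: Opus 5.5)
The inequality follows from the ordinary Cauchy--Schwarz inequality once $\mathrm{T}u$ is rewritten using the Moore--Penrose identity, together with the self-adjointness and idempotence of $\mathrm{TT}^\dag$.

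\emph{Rewrite the pairing.} Since $\mathrm{T}\in\mathcal{CR(H)}$, relation (a) gives $\mathrm{T}=\mathrm{TT}^\dag\mathrm{T}$. Relation (c) says $\mathrm{P}:=\mathrm{TT}^\dag$ is self-adjoint. Combining (a) and (b), $\mathrm{P}^2=\mathrm{TT}^\dag\mathrm{TT}^\dag=\mathrm{TT}^\dag=\mathrm{P}$, so $\mathrm{P}$ is an orthogonal projection, namely onto $\mathcal{R}(\mathrm{T})$. Therefore, for $u,v\in\mathcal{H}$,
\begin{equation*}
\langle \mathrm{T}u,v\rangle=\langle \mathrm{P}\mathrm{T}u,v\rangle=\langle \mathrm{T}u,\mathrm{P}v\rangle .
\end{equation*}

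\emph{Apply Cauchy--Schwarz.} This gives
\begin{equation*}
|\langle \mathrm{T}u,v\rangle|^2\leq \|\mathrm{T}u\|^2\,\|\mathrm{P}v\|^2 .
\end{equation*}
We identify each factor:
\begin{equation*}
\|\mathrm{T}u\|^2=\langle \mathrm{T}^*\mathrm{T}u,u\rangle=\langle|\mathrm{T}|^2u,u\rangle .
\end{equation*}
Since $\mathrm{P}$ is self-adjoint and idempotent,
\begin{equation*}
\|\mathrm{P}v\|^2=\langle \mathrm{P}^*\mathrm{P}v,v\rangle=\langle \mathrm{P}^2v,v\rangle=\langle \mathrm{TT}^\dag v,v\rangle .
\end{equation*}
Substituting these two identities yields the claimed inequality.

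There is no real obstacle. The only point needing care is checking that $\mathrm{TT}^\dag$ is idempotent and self-adjoint, which follows directly from the Penrose relations (a)--(c) listed in the introduction; closedness of $\mathcal{R}(\mathrm{T})$ is used only to guarantee that $\mathrm{T}^\dag$ exists.
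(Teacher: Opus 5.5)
Your proof is correct. The paper does not prove this lemma itself; it cites it from Sababheh et al. However, the paper's proof of the generalization, Lemma \ref{Gen-Moore-Pen}, reduces at $\mathrm{A}=\mathrm{I}$ to exactly your argument: insert $\mathrm{TT}^\dag$ via $\mathrm{T}=\mathrm{TT}^\dag\mathrm{T}$, move it across using self-adjointness, apply Cauchy--Schwarz, and reduce $\|\mathrm{TT}^\dag v\|^2$ to $\langle \mathrm{TT}^\dag v,v\rangle$ by idempotence.
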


A characterization of the existence of $\mathrm{A}$-generalized inverses in terms of compatible pairs is provided by Corach et al. in Theorem $3.1$ of \cite{Corach-Weighted} as follows:
\begin{theorem}\label{compatibletheorem}
 $\mathrm{T}\in \mathcal{B(H)}$ with closed range, $\mathrm{T}$ admits an $\mathrm{A}$-generalized inverse if and only if the pairs $(\mathrm{A},\mathcal{R}(\mathrm{T}))$ and $(\mathrm{A},\mathcal{N}(\mathrm{T}))$ are compatible.   
\end{theorem}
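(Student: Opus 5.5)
We prove both directions directly. Necessity is read off from the four defining identities. Sufficiency is a construction: invert $\mathrm{T}$ on a suitable complement of its null space and compose with an $\mathrm{A}$-selfadjoint projection onto its range.

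\emph{Necessity.} Suppose $\mathrm{T}^{\dag_\mathrm{A}}$ exists, and put $\mathrm{Q}=\mathrm{TT}^{\dag_\mathrm{A}}$ and $\mathrm{P}=\mathrm{T}^{\dag_\mathrm{A}}\mathrm{T}$.
\begin{itemize}
\item Using $\mathrm{TT}^{\dag_\mathrm{A}}\mathrm{T}=\mathrm{T}$, both operators are idempotent: $\mathrm{Q}^2=(\mathrm{TT}^{\dag_\mathrm{A}}\mathrm{T})\mathrm{T}^{\dag_\mathrm{A}}=\mathrm{Q}$, and similarly $\mathrm{P}^2=\mathrm{P}$.
\item Clearly $\mathcal{R}(\mathrm{Q})\subseteq\mathcal{R}(\mathrm{T})$. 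Conversely, $\mathrm{T}=\mathrm{QT}$ gives $\mathcal{R}(\mathrm{T})\subseteq\mathcal{R}(\mathrm{Q})$, so $\mathcal{R}(\mathrm{Q})=\mathcal{R}(\mathrm{T})$.
\item The third defining identity is exactly $\mathrm{AQ}=\mathrm{Q}^*\mathrm{A}$. Hence $\mathrm{Q}\in\mathcal{P}(\mathrm{A},\mathcal{R}(\mathrm{T}))$, and $(\mathrm{A},\mathcal{R}(\mathrm{T}))$ is compatible.
\item Clearly $\mathcal{N}(\mathrm{T})\subseteq\mathcal{N}(\mathrm{P})$. Conversely, $\mathrm{T}=\mathrm{TP}$ gives $\mathcal{N}(\mathrm{P})\subseteq\mathcal{N}(\mathrm{T})$, so $\mathcal{N}(\mathrm{P})=\mathcal{N}(\mathrm{T})$.
\item Set $\mathrm{E}=\mathrm{I}-\mathrm{P}$. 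It is idempotent with $\mathcal{R}(\mathrm{E})=\mathcal{N}(\mathrm{P})=\mathcal{N}(\mathrm{T})$. The fourth identity $\mathrm{AP}=\mathrm{P}^*\mathrm{A}$ gives $\mathrm{AE}=\mathrm{E}^*\mathrm{A}$. Hence $\mathrm{E}\in\mathcal{P}(\mathrm{A},\mathcal{N}(\mathrm{T}))$, and $(\mathrm{A},\mathcal{N}(\mathrm{T}))$ is compatible.
\end{itemize}

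\emph{Sufficiency.} Choose $\mathrm{Q}\in\mathcal{P}(\mathrm{A},\mathcal{R}(\mathrm{T}))$ and $\mathrm{E}\in\mathcal{P}(\mathrm{A},\mathcal{N}(\mathrm{T}))$, and let $\mathrm{P}=\mathrm{I}-\mathrm{E}$.
\begin{itemize}
\item $\mathrm{P}$ is a bounded idempotent, so $M:=\mathcal{R}(\mathrm{P})=\mathcal{N}(\mathrm{E})$ is closed and $\mathcal{H}=M\dotplus\mathcal{N}(\mathrm{T})$.
\item The restriction $\mathrm{T}|_M:M\to\mathcal{R}(\mathrm{T})$ is injective, since $M\cap\mathcal{N}(\mathrm{T})=\{0\}$.
\item It is surjective, since $\mathrm{T}x=\mathrm{TP}x$ for every $x$.
\item $M$ and $\mathcal{R}(\mathrm{T})$ are Banach spaces, the latter because $\mathrm{T}$ has closed range. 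By the bounded inverse theorem, $\mathrm{T}|_M$ therefore has a bounded inverse $\mathrm{T}_0^{-1}:\mathcal{R}(\mathrm{T})\to M$.
\end{itemize}
Define $\mathrm{S}=\mathrm{T}_0^{-1}\mathrm{Q}\in\mathcal{B(H)}$. We then verify the four identities:
\begin{itemize}
\item $\mathrm{TS}=\mathrm{Q}$, because $\mathrm{T}\mathrm{T}_0^{-1}$ is the identity on $\mathcal{R}(\mathrm{T})=\mathcal{R}(\mathrm{Q})$.
\item $\mathrm{ST}=\mathrm{P}$, because $\mathrm{Q}$ fixes $\mathcal{R}(\mathrm{T})$, so $\mathrm{ST}x=\mathrm{T}_0^{-1}\mathrm{T}x=\mathrm{T}_0^{-1}\mathrm{T}(\mathrm{P}x)=\mathrm{P}x$.
\item $\mathrm{TST}=\mathrm{QT}=\mathrm{T}$.
\item $\mathrm{STS}=\mathrm{PS}=\mathrm{S}$, since $\mathcal{R}(\mathrm{S})\subseteq M=\mathcal{R}(\mathrm{P})$.
\item $\mathrm{ATS}=\mathrm{AQ}=\mathrm{Q}^*\mathrm{A}=(\mathrm{TS})^*\mathrm{A}$.
\item $\mathrm{AST}=\mathrm{A}(\mathrm{I}-\mathrm{E})=(\mathrm{I}-\mathrm{E})^*\mathrm{A}=(\mathrm{ST})^*\mathrm{A}$.
\end{itemize}
Thus $\mathrm{S}$ is an $\mathrm{A}$-generalized inverse of $\mathrm{T}$.

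The only point needing care is the boundedness of $\mathrm{S}$. It rests on two facts: $M$ is closed because it is the range of a bounded idempotent, and $\mathcal{R}(\mathrm{T})$ is closed by hypothesis. Given these, the bounded inverse theorem applies and the remaining verifications are purely algebraic.
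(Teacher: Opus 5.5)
Your proof is correct in both directions. In the necessity part you correctly extract $\mathrm{TT}^{\dag_\mathrm{A}}\in\mathcal{P}(\mathrm{A},\mathcal{R}(\mathrm{T}))$ and $\mathrm{I}-\mathrm{T}^{\dag_\mathrm{A}}\mathrm{T}\in\mathcal{P}(\mathrm{A},\mathcal{N}(\mathrm{T}))$ from the defining identities. In the sufficiency part each of the six verifications checks. The boundedness of $\mathrm{S}=\mathrm{T}_0^{-1}\mathrm{Q}$ is properly justified by the closedness of $M=\mathcal{N}(\mathrm{E})$ and of $\mathcal{R}(\mathrm{T})$. The paper itself gives no proof of this statement; it is imported from Theorem~3.1 of \cite{Corach-Weighted}. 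So your argument supplies a self-contained proof rather than paralleling one in the text.

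A few comments on what your route gives.

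\emph{An explicit formula.} Since $\mathrm{T}\in\mathcal{CR(H)}$, you could also write the inverse explicitly. For $y\in\mathcal{R}(\mathrm{T})$ one has $(\mathrm{I}-\mathrm{E})\mathrm{T}^\dag y\in M$ and $\mathrm{T}(\mathrm{I}-\mathrm{E})\mathrm{T}^\dag y=\mathrm{TT}^\dag y=y$, because $\mathrm{TE}=0$. Hence $\mathrm{S}=(\mathrm{I}-\mathrm{E})\mathrm{T}^\dag\mathrm{Q}$. This hides the open-mapping step inside the existence of $\mathrm{T}^\dag$ and matches the Moore--Penrose theme of the paper.

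\emph{Your construction is exhaustive.} Let $\mathrm{S}$ be any $\mathrm{A}$-generalized inverse, and set $\mathrm{Q}=\mathrm{TS}$ and $M=\mathcal{R}(\mathrm{ST})$. The identities $\mathrm{S}=\mathrm{S}(\mathrm{TS})=(\mathrm{ST})\mathrm{S}$ force $\mathrm{S}=\mathrm{T}_0^{-1}\mathrm{Q}$. So $\mathrm{S}\mapsto(\mathrm{TS},\mathrm{I}-\mathrm{ST})$ is a bijection onto $\mathcal{P}(\mathrm{A},\mathcal{R}(\mathrm{T}))\times\mathcal{P}(\mathrm{A},\mathcal{N}(\mathrm{T}))$. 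Combined with the uniqueness criterion recalled in the introduction, $\mathrm{T}^{\dag_\mathrm{A}}$ is unique exactly when $\mathcal{R}(\mathrm{T})\cap\mathcal{N}(\mathrm{A})=\mathcal{N}(\mathrm{T})\cap\mathcal{N}(\mathrm{A})=\{0\}$. In Lemma~\ref{Gen-Moore-Pen} any choice works, so this is harmless there.

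\emph{Closed range in necessity.} Your necessity argument never uses the closed-range hypothesis. Existence of $\mathrm{T}^{\dag_\mathrm{A}}$ already forces $\mathcal{R}(\mathrm{T})=\mathcal{R}(\mathrm{TT}^{\dag_\mathrm{A}})$ to be closed, since it is the range of a bounded idempotent.
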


\noindent Moreover, the Moore-Penrose inverse of a block matrix is given by Hung and Markham in \cite{hung1975moore} as follows:
\begin{theorem}\label{theorem penrose matrix}
 Let $\mathrm{M}=\begin{bmatrix}
     \mathrm{M}_1&\mathrm{M}_2\\
     \mathrm{M}_3&\mathrm{M}_4
 \end{bmatrix}_{m\times n}$. Then
$\mathrm{M}^\dag=\begin{bmatrix}
     \mathrm{K}^\dag(\mathrm{M}_1^*-\mathrm{EF})&\mathrm{K}^\dag(\mathrm{M}_3^*-\mathrm{EH})\\
     \mathrm{F}&\mathrm{H}
 \end{bmatrix}_{n\times m},$
  where 
  \begin{eqnarray*}
      \mathrm{K}&=&\mathrm{M}_1^*\mathrm{M}_1+\mathrm{M}_3^*\mathrm{M}_3,~\mathrm{E}=\mathrm{M}_1^*\mathrm{M}_2+\mathrm{M}_3^*\mathrm{M}_4,\\
      \mathrm{R}&=&\mathrm{M}_2-\mathrm{M}_1\mathrm{K}^\dag \mathrm{E},~\mathrm{S}=\mathrm{M}_4-\mathrm{M}_3\mathrm{K}^\dag \mathrm{E},\\
      \mathrm{L}&=&\mathrm{R}^*\mathrm{R}+\mathrm{S}^*\mathrm{S},~\mathrm{T}=\mathrm{K}^\dag \mathrm{E}(\mathrm{I}-\mathrm{L}^\dag \mathrm{L}),\\
      \mathrm{F}&=&\mathrm{L}^\dag \mathrm{R}^*+(\mathrm{I}-\mathrm{L}^\dag \mathrm{L})(\mathrm{I}+\mathrm{T}^*\mathrm{T})^{-1}(\mathrm{K}^\dag \mathrm{E})^*\mathrm{K}^\dag(\mathrm{M}_1^*-\mathrm{EL}^\dag \mathrm{R}^*),\\
      \mathrm{H}&=&\mathrm{L}^\dag \mathrm{S}^*+(\mathrm{I}-\mathrm{L}^\dag \mathrm{L})(\mathrm{I}+\mathrm{T}^*\mathrm{T})^{-1}(\mathrm{K}^\dag \mathrm{E})^*\mathrm{K}^\dag(\mathrm{M}_3^*-\mathrm{EL}^\dag \mathrm{S}^*).
  \end{eqnarray*}

\end{theorem}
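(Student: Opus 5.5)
The plan is to reduce the $2\times 2$ partition to a $1\times 2$ column partition and then use the classical Greville--Cline formula for the Moore--Penrose inverse of a bordered matrix $[\mathrm{C}\ \ \mathrm{D}]$. Set $\mathrm{C}=\begin{bmatrix}\mathrm{M}_1\\ \mathrm{M}_3\end{bmatrix}$ and $\mathrm{D}=\begin{bmatrix}\mathrm{M}_2\\ \mathrm{M}_4\end{bmatrix}$, so that $\mathrm{M}=[\mathrm{C}\ \ \mathrm{D}]$.

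In this notation the quantities of the statement become
\[
\mathrm{K}=\mathrm{C}^*\mathrm{C},\qquad \mathrm{E}=\mathrm{C}^*\mathrm{D},\qquad \mathrm{C}^\dag=\mathrm{K}^\dag \mathrm{C}^*.
\]
The last identity is standard, and it gives $\mathrm{C}\mathrm{K}^\dag\mathrm{E}=\mathrm{CC}^\dag\mathrm{D}$. Hence
\[
\mathrm{Z}:=\begin{bmatrix}\mathrm{R}\\ \mathrm{S}\end{bmatrix}=\mathrm{D}-\mathrm{C}\mathrm{K}^\dag\mathrm{E}=(\mathrm{I}-\mathrm{CC}^\dag)\mathrm{D},
\]
and $\mathrm{L}=\mathrm{Z}^*\mathrm{Z}$, so $\mathrm{Z}^\dag=\mathrm{L}^\dag\mathrm{Z}^*=[\mathrm{L}^\dag\mathrm{R}^*\ \ \mathrm{L}^\dag\mathrm{S}^*]$. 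Also $\mathrm{T}=\mathrm{C}^\dag\mathrm{D}(\mathrm{I}-\mathrm{Z}^\dag\mathrm{Z})$, and $\mathrm{I}-\mathrm{L}^\dag\mathrm{L}=\mathrm{I}-\mathrm{Z}^\dag\mathrm{Z}$ is the orthogonal projection onto $\mathcal{N}(\mathrm{Z})$.

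Next I would use the known column-bordering formula
\[
[\mathrm{C}\ \ \mathrm{D}]^\dag=\begin{bmatrix}\mathrm{C}^\dag-\mathrm{C}^\dag\mathrm{D}\mathrm{Y}\\ \mathrm{Y}\end{bmatrix},\qquad \mathrm{Y}=\mathrm{Z}^\dag+(\mathrm{I}-\mathrm{Z}^\dag\mathrm{Z})(\mathrm{I}+\mathrm{T}^*\mathrm{T})^{-1}(\mathrm{C}^\dag\mathrm{D})^*\mathrm{C}^\dag(\mathrm{I}-\mathrm{D}\mathrm{Z}^\dag).
\]
Splitting $\mathrm{Y}=[\mathrm{F}\ \ \mathrm{H}]$ along the row partition gives exactly the stated $\mathrm{F}$ and $\mathrm{H}$. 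This uses $\mathrm{C}^\dag=\mathrm{K}^\dag[\mathrm{M}_1^*\ \ \mathrm{M}_3^*]$ and $\mathrm{D}\mathrm{Z}^\dag=\begin{bmatrix}\mathrm{M}_2\\ \mathrm{M}_4\end{bmatrix}\mathrm{L}^\dag[\mathrm{R}^*\ \ \mathrm{S}^*]$, so that $\mathrm{C}^\dag(\mathrm{I}-\mathrm{D}\mathrm{Z}^\dag)=\mathrm{K}^\dag[\mathrm{M}_1^*-\mathrm{E}\mathrm{L}^\dag\mathrm{R}^*\ \ \ \mathrm{M}_3^*-\mathrm{E}\mathrm{L}^\dag\mathrm{S}^*]$. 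Likewise the top block is $\mathrm{C}^\dag-\mathrm{C}^\dag\mathrm{D}\mathrm{Y}=\mathrm{K}^\dag(\mathrm{C}^*-\mathrm{E}\mathrm{Y})$, whose two column blocks are $\mathrm{K}^\dag(\mathrm{M}_1^*-\mathrm{EF})$ and $\mathrm{K}^\dag(\mathrm{M}_3^*-\mathrm{EH})$, as claimed.

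To prove the bordering formula itself, I would check the four Penrose equations (a)--(d) for $\mathrm{G}=\begin{bmatrix}\mathrm{C}^\dag-\mathrm{C}^\dag\mathrm{D}\mathrm{Y}\\ \mathrm{Y}\end{bmatrix}$. The key facts are the orthogonal splitting $\mathcal{R}(\mathrm{M})=\mathcal{R}(\mathrm{C})\oplus\mathcal{R}(\mathrm{Z})$, which makes $\mathrm{MM}^\dag=\mathrm{CC}^\dag+\mathrm{ZZ}^\dag$, and the relations $\mathrm{Z}^*\mathrm{C}=0$ and $\mathrm{C}^\dag\mathrm{Z}=0$. With these, $\mathrm{MG}=\mathrm{CC}^\dag+(\mathrm{D}-\mathrm{CC}^\dag\mathrm{D})\mathrm{Y}=\mathrm{CC}^\dag+\mathrm{Z}\mathrm{Y}$. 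Since $\mathrm{Z}(\mathrm{I}-\mathrm{Z}^\dag\mathrm{Z})=0$, we get $\mathrm{ZY}=\mathrm{ZZ}^\dag$, which gives (c) and then (a) immediately.

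The main obstacle is (d), the symmetry of $\mathrm{GM}$, together with (b). This is where the correction term involving $(\mathrm{I}+\mathrm{T}^*\mathrm{T})^{-1}$ matters: it handles the part of $\mathrm{D}$ lying in $\mathcal{R}(\mathrm{C})$ whenever $\mathrm{Z}$ is not injective. I would first treat the two extreme cases as sanity checks: $\mathrm{Z}^\dag\mathrm{Z}=\mathrm{I}$, where the correction vanishes, and $\mathrm{Z}=0$, where $\mathrm{Y}=(\mathrm{I}+\mathrm{W}^*\mathrm{W})^{-1}\mathrm{W}^*\mathrm{C}^\dag$ with $\mathrm{W}=\mathrm{C}^\dag\mathrm{D}$, a standard minimum-norm computation. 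For the general case I would compute $\mathrm{GM}$ blockwise using $\mathrm{Y}\mathrm{C}=\mathrm{P}(\mathrm{I}+\mathrm{T}^*\mathrm{T})^{-1}\mathrm{W}^*\mathrm{C}^\dag\mathrm{C}$, where $\mathrm{P}=\mathrm{I}-\mathrm{Z}^\dag\mathrm{Z}$ and $\mathrm{T}=\mathrm{WP}$. The needed step is to show that $\mathrm{P}$ commutes with $(\mathrm{I}+\mathrm{T}^*\mathrm{T})^{-1}$ on the relevant subspace; this holds because $\mathrm{T}^*\mathrm{T}=\mathrm{PW}^*\mathrm{WP}$ leaves $\mathcal{N}(\mathrm{Z})$ and its complement invariant. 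Symmetry then reduces to the self-adjointness of $(\mathrm{I}+\mathrm{T}^*\mathrm{T})^{-1}$. In the operator setting the closed-range hypotheses on $\mathrm{K}$ and $\mathrm{L}$ ensure all the $\dag$'s are bounded, and $\mathrm{I}+\mathrm{T}^*\mathrm{T}$ is always invertible.
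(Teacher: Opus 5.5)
The paper gives no proof of this statement: it is quoted from Hung and Markham and used only in the case $\mathrm{M}_1=\mathrm{M}_4=0$. Your proposal is correct and supplies an actual derivation.

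Your dictionary ($\mathrm{M}=[\mathrm{C}\ \ \mathrm{D}]$, $\mathrm{K}=\mathrm{C}^*\mathrm{C}$, $\mathrm{E}=\mathrm{C}^*\mathrm{D}$, $\mathrm{Z}=(\mathrm{I}-\mathrm{CC}^\dag)\mathrm{D}$, $\mathrm{L}=\mathrm{Z}^*\mathrm{Z}$, $\mathrm{T}=\mathrm{C}^\dag\mathrm{D}(\mathrm{I}-\mathrm{Z}^\dag\mathrm{Z})$) is right. Splitting the column-bordering formula along the row partition reproduces $\mathrm{F}$, $\mathrm{H}$ and both top blocks exactly, and the Penrose checks go through. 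Two refinements of your sketch are worth making.

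For (d), put $\mathrm{W}=\mathrm{C}^\dag\mathrm{D}$, $\mathrm{P}=\mathrm{I}-\mathrm{Z}^\dag\mathrm{Z}$ and $\mathrm{Q}=(\mathrm{I}+\mathrm{T}^*\mathrm{T})^{-1}$. Since $\mathrm{T}=\mathrm{WP}$, you have $\mathrm{P}\mathrm{T}^*\mathrm{T}=\mathrm{T}^*\mathrm{T}\mathrm{P}=\mathrm{T}^*\mathrm{T}$. This gives $\mathrm{PQ}=\mathrm{QP}$ globally, and also $\mathrm{Q}(\mathrm{I}-\mathrm{P})=\mathrm{I}-\mathrm{P}$. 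It is this second fact, not self-adjointness of $\mathrm{Q}$ alone, that matches the off-diagonal blocks.

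Concretely, use $\mathrm{Z}^\dag\mathrm{C}=0$, $\mathrm{Z}^\dag\mathrm{D}=\mathrm{Z}^\dag\mathrm{Z}$ and $\mathrm{W}^*\mathrm{C}^\dag\mathrm{C}=\mathrm{W}^*$. These give $\mathrm{YC}=\mathrm{QT}^*$ and $\mathrm{YD}=(\mathrm{I}-\mathrm{P})+(\mathrm{I}-\mathrm{Q})$. Hence the blocks of $\mathrm{GM}$ are:
the $(1,1)$ block $\mathrm{C}^\dag\mathrm{C}-\mathrm{WQPW}^*$, which is Hermitian;
the $(2,2)$ block $\mathrm{YD}$, which is Hermitian;
the $(1,2)$ block $\mathrm{W}-\mathrm{W}\,\mathrm{YD}=\mathrm{WPQ}+\mathrm{W}(\mathrm{Q}-\mathrm{I})(\mathrm{I}-\mathrm{P})=\mathrm{WPQ}=(\mathrm{YC})^*$.

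Equation (b) needs no real work. If $\mathrm{C}^*x=\mathrm{D}^*x=0$, then $\mathrm{C}^\dag x=0$ and $\mathrm{Z}^*x=0$, so $\mathrm{Z}^\dag x=0$ and $\mathrm{G}x=0$. Also $\mathrm{MG}=\mathrm{CC}^\dag+\mathrm{ZZ}^\dag$ is the orthogonal projection onto $\mathcal{R}(\mathrm{M})$. Together these give $\mathrm{GMG}=\mathrm{G}$.

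As for what each approach buys: the citation is all the paper needs. For its actual use even less would do, since $\begin{bmatrix}0&\mathrm{M}_2\\ \mathrm{M}_3&0\end{bmatrix}^\dag=\begin{bmatrix}0&\mathrm{M}_3^\dag\\ \mathrm{M}_2^\dag&0\end{bmatrix}$ follows from the four Penrose equations in one line. Your derivation is self-contained and purely algebraic. It therefore shows the full formula holds verbatim for operator matrices whenever $\mathrm{C}$ and $\mathrm{Z}$ have closed range. In that case $\mathcal{R}(\mathrm{M})=\mathcal{R}(\mathrm{C})\oplus\mathcal{R}(\mathrm{Z})$ is automatically closed and every inverse appearing is bounded. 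This is what one would need to state the block-matrix corollaries for operator matrices with closed-range entries rather than only for $n\times n$ matrices.
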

Now putting $\mathrm{M}_1 = \mathrm{M}_4 = 0$ in Theorem \ref{theorem penrose matrix}, the matrix $\mathrm{M}$ becomes $\begin{bmatrix}
    0&\mathrm{M}_2\\
    \mathrm{M}_3&0
\end{bmatrix}$ and    \begin{equation}\label{moor_pen_block_matr}
           \begin{bmatrix}
    0&\mathrm{M}_2\\
    \mathrm{M}_3&0
\end{bmatrix}^\dag=\begin{bmatrix}
              0&(\mathrm{M}_3^*\mathrm{M}_3)^\dag \mathrm{M}_3^*\\
              (\mathrm{M}_2^*\mathrm{M}_2)^\dag \mathrm{M}_2^*&0
          \end{bmatrix}.  
          \end{equation} 
The following lemmas on the Berezin number and the Berezin norm of operator matrices will be needed to establish our results.
This result is established by M. Bakherad in \cite{bakherad2018some}.
\begin{lemma}\label{matrixber}
    Let $\mathrm{S}\in\mathcal{B}(\mathcal{H}_1),~ \mathrm{P}\in\mathcal{B}(\mathcal{H}_2,\mathcal{H}_1),~ \mathrm{Q}\in\mathcal{B}(\mathcal{H}_1,\mathcal{H}_2)$ and $\mathrm{T}\in\mathcal{B}(\mathcal{H}_2)$. Then the following inequalities hold:
    \begin{eqnarray*}
            \mathbf{ber}\left(\begin{bmatrix}
                \mathrm{S}&0\\
                0&\mathrm{T}
            \end{bmatrix}\right) &\leq& \max\{\mathbf{ber}(\mathrm{S}),\mathbf{ber}(\mathrm{T})\},\\
             \mbox{ and }~ \ \mathbf{ber}\left(\begin{bmatrix}
                0&\mathrm{P}\\
                \mathrm{Q}&0
            \end{bmatrix}\right)&\leq&\frac{1}{2}(\|\mathrm{P}\|+\|\mathrm{Q}\|).
        \end{eqnarray*}
    In particular, if $\mathcal{H}_1=\mathcal{H}_2$, then $\mathbf{ber}\left(\begin{bmatrix}
                0&\mathrm{P}\\
                \mathrm{P}&0
            \end{bmatrix}\right)\leq\|\mathrm{P}\|$.
\end{lemma}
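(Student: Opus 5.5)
The plan is to compute the Berezin symbol of a $2\times 2$ operator matrix directly from the reproducing kernel of $\mathcal{H}=\mathcal{H}_1\oplus\mathcal{H}_2$ on $X_1\times X_2$. Following the convention used in this setting (as in Bakherad's work), for $(\lambda,\mu)\in X_1\times X_2$ the reproducing kernel is $k_{(\lambda,\mu)}=k_\lambda\oplus k_\mu$. The normalized kernel can therefore be written as
\[
\hat{k}_{(\lambda,\mu)}=\left(a\,\hat{k}_\lambda,\ b\,\hat{k}_\mu\right),\qquad a=\frac{\|k_\lambda\|}{\sqrt{\|k_\lambda\|^2+\|k_\mu\|^2}},\quad b=\frac{\|k_\mu\|}{\sqrt{\|k_\lambda\|^2+\|k_\mu\|^2}},
\]
where $a,b\ge 0$ and $a^2+b^2=1$. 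Everything then reduces to elementary scalar estimates in $a$ and $b$, uniformly in $(\lambda,\mu)$, followed by taking the supremum.

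For the diagonal matrix $\mathrm{diag}(\mathrm{S},\mathrm{T})$, I expand the inner product as
\[
\left\langle \mathrm{diag}(\mathrm{S},\mathrm{T})\hat{k}_{(\lambda,\mu)},\hat{k}_{(\lambda,\mu)}\right\rangle=a^2\,\Tilde{\mathrm{S}}(\lambda)+b^2\,\Tilde{\mathrm{T}}(\mu).
\]
Its modulus is at most $a^2\,\mathbf{ber}(\mathrm{S})+b^2\,\mathbf{ber}(\mathrm{T})$. Since $a^2+b^2=1$, this is a convex combination and hence at most $\max\{\mathbf{ber}(\mathrm{S}),\mathbf{ber}(\mathrm{T})\}$. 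Taking the supremum over $(\lambda,\mu)$ gives the first inequality.

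For the off-diagonal matrix, the same expansion gives
\[
\left\langle \begin{bmatrix}0&\mathrm{P}\\ \mathrm{Q}&0\end{bmatrix}\hat{k}_{(\lambda,\mu)},\hat{k}_{(\lambda,\mu)}\right\rangle=ab\left(\langle \mathrm{P}\hat{k}_\mu,\hat{k}_\lambda\rangle+\langle \mathrm{Q}\hat{k}_\lambda,\hat{k}_\mu\rangle\right).
\]
By Cauchy–Schwarz and $\|\hat{k}_\lambda\|=\|\hat{k}_\mu\|=1$, its modulus is bounded by $ab(\|\mathrm{P}\|+\|\mathrm{Q}\|)$. Then $ab\le\frac{1}{2}(a^2+b^2)=\frac12$ by the AM–GM inequality, and taking the supremum yields $\frac12(\|\mathrm{P}\|+\|\mathrm{Q}\|)$. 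The particular case $\mathcal{H}_1=\mathcal{H}_2$, $\mathrm{Q}=\mathrm{P}$ follows immediately by substitution.

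The only delicate point is the first step: correctly identifying the kernel and its normalization for the direct sum. In particular, one must check that $k_\lambda\oplus k_\mu$ reproduces evaluation at $(\lambda,\mu)$ for functions $f\oplus g$, interpreted as $f(\lambda)+g(\mu)$. This is what makes the weights $a,b$ satisfy $a^2+b^2=1$. Once this identification is fixed, the remaining estimates are routine.
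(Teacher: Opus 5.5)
The paper gives no proof of its own for this lemma; it quotes it from Bakherad. Your argument is correct and is essentially the standard one from that source: write the normalized kernel of $\mathcal{H}_1\oplus\mathcal{H}_2$ as $(a\hat{k}_\lambda, b\hat{k}_\mu)$ with $a^2+b^2=1$, bound the convex combination $a^2\Tilde{\mathrm{S}}(\lambda)+b^2\Tilde{\mathrm{T}}(\mu)$ for the diagonal case, and use Cauchy--Schwarz together with $ab\le\frac12$ for the off-diagonal case. Since your estimates use only $a,b\ge 0$ and $a^2+b^2=1$, they hold under whatever convention is adopted for the kernels of the direct sum, including the disjoint-union one where $ab=0$, so the identification you flag as delicate does not affect the conclusion.
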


Bhunia et al. \cite{bhunia2024berezin} have proved the following Berezin norm inequality.
\begin{corollary}\label{ber norm lemma}
    Let $\mathrm{S}\in\mathcal{B}(\mathcal{H}_1),~ \mathrm{P}\in\mathcal{B}(\mathcal{H}_2,\mathcal{H}_1),~ \mathrm{Q}\in\mathcal{B}(\mathcal{H}_1,\mathcal{H}_2)$ and $\mathrm{T}\in\mathcal{B}(\mathcal{H}_2)$. Then
    \begin{eqnarray*}
            \left\|\begin{bmatrix}
                \mathrm{S}&0\\
                0&\mathrm{T}
            \end{bmatrix}\right\|_\mathbf{ber} &\leq& \max\{\|\mathrm{S}\|_\mathbf{ber},\|\mathrm{T}\|_\mathbf{ber}\} \\
           \mbox{ and } ~ \left\|\begin{bmatrix}
                0&\mathrm{P}\\
                \mathrm{Q}&0
            \end{bmatrix}\right\|_\mathbf{ber} &\leq& \max\{\|\mathrm{P}\|_\mathbf{ber},\|\mathrm{Q}\|_\mathbf{ber}\}.
            \end{eqnarray*}
\end{corollary}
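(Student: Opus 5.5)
The plan is to compute the Berezin norm of each block operator directly from the definition $\|\cdot\|_{\mathbf{ber}}=\sup|\langle \cdot\,\hat{k}_\xi,\hat{k}_\eta\rangle|$. This requires describing the normalized reproducing kernels of $\mathcal{H}=\mathcal{H}_1\oplus\mathcal{H}_2$ explicitly.

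\textbf{Step 1: the kernels of the direct sum.} I will use the convention of \cite{bakherad2018some} and \cite{bhunia2024berezin}: the point set is regarded as the disjoint union of $X_1$ and $X_2$. An evaluation of $(f,g)\in\mathcal{H}$ is either $f(x)$ with $x\in X_1$ or $g(y)$ with $y\in X_2$. So every normalized reproducing kernel of $\mathcal{H}$ has one of the two forms
\begin{equation*}
\hat{k}_x\oplus 0\quad(x\in X_1)\qquad\text{or}\qquad 0\oplus\hat{k}_y\quad(y\in X_2).
\end{equation*}
The main obstacle is this step: matching the paper's phrase ``RKHS on $X_1\times X_2$'' with this description. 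I would state the convention explicitly. If a product-type kernel $\hat{k}_x\oplus\hat{k}_y$ (suitably normalized) were intended instead, I would split it as a convex combination. Writing the unit vector as $\alpha\,\hat{k}_x\oplus\beta\,\hat{k}_y$ with $|\alpha|^2+|\beta|^2=1$, the same estimate goes through because $|\alpha\bar\alpha'|+|\beta\bar\beta'|\le1$ by Cauchy--Schwarz.

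\textbf{Step 2: diagonal case.} For $\mathrm{D}=\begin{bmatrix}\mathrm{S}&0\\0&\mathrm{T}\end{bmatrix}$ there are four choices of the pair of kernels.
\begin{itemize}
\item If both kernels lie in the first component, $\langle \mathrm{D}(\hat{k}_x\oplus0),\hat{k}_{x'}\oplus0\rangle=\langle \mathrm{S}\hat{k}_x,\hat{k}_{x'}\rangle$, which is bounded by $\|\mathrm{S}\|_{\mathbf{ber}}$.
\item If both lie in the second component, the value is $\langle \mathrm{T}\hat{k}_y,\hat{k}_{y'}\rangle$, bounded by $\|\mathrm{T}\|_{\mathbf{ber}}$.
\item In the two mixed cases, $\mathrm{D}(\hat{k}_x\oplus0)=\mathrm{S}\hat{k}_x\oplus0$ is orthogonal to $0\oplus\hat{k}_y$, so the value is $0$. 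The other mixed case is symmetric.
\end{itemize}
Taking the supremum over all pairs gives $\|\mathrm{D}\|_{\mathbf{ber}}\le\max\{\|\mathrm{S}\|_{\mathbf{ber}},\|\mathrm{T}\|_{\mathbf{ber}}\}$.

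\textbf{Step 3: off-diagonal case.} For $\mathrm{N}=\begin{bmatrix}0&\mathrm{P}\\\mathrm{Q}&0\end{bmatrix}$ the roles swap.
\begin{itemize}
\item $\mathrm{N}(\hat{k}_x\oplus0)=0\oplus\mathrm{Q}\hat{k}_x$, so pairing with $\hat{k}_{x'}\oplus0$ gives $0$. Pairing with $0\oplus\hat{k}_y$ gives $\langle\mathrm{Q}\hat{k}_x,\hat{k}_y\rangle$.
\item $\mathrm{N}(0\oplus\hat{k}_y)=\mathrm{P}\hat{k}_y\oplus0$, so pairing with $\hat{k}_x\oplus0$ gives $\langle\mathrm{P}\hat{k}_y,\hat{k}_x\rangle$. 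Pairing with $0\oplus\hat{k}_{y'}$ gives $0$.
\end{itemize}
Here $\|\mathrm{P}\|_{\mathbf{ber}}$ for $\mathrm{P}\in\mathcal{B}(\mathcal{H}_2,\mathcal{H}_1)$ is understood as $\sup_{x\in X_1,\,y\in X_2}|\langle\mathrm{P}\hat{k}_y,\hat{k}_x\rangle|$, and similarly for $\mathrm{Q}$. With this reading every nonzero value is bounded by $\|\mathrm{P}\|_{\mathbf{ber}}$ or $\|\mathrm{Q}\|_{\mathbf{ber}}$. Taking the supremum then gives the second inequality.
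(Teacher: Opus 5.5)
The paper gives no proof of this statement. It is quoted from \cite{bhunia2024berezin}, so your argument is a self-contained verification rather than a variant of an in-paper proof. Your argument is correct, but the emphasis is on the wrong case.

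The paper explicitly regards $\mathcal{H}_1\oplus\mathcal{H}_2$ as an RKHS on $X_1\times X_2$. This means normalized kernels of product type $u=\alpha\hat{k}_x\oplus\beta\hat{k}_y$ with $|\alpha|^2+|\beta|^2=1$; for $k_x\oplus k_y$ normalized, $\alpha$ and $\beta$ are proportional to $\|k_x\|$ and $\|k_y\|$. Your attribution of the disjoint-union model to \cite{bakherad2018some} and \cite{bhunia2024berezin} does not match this stated setting. There is also internal evidence against it: in the disjoint-union model every Berezin symbol of $\begin{bmatrix}0&\mathrm{P}\\ \mathrm{Q}&0\end{bmatrix}$ vanishes, which would make the second bound of Lemma~\ref{matrixber} trivial. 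So Steps~2--3, where both inequalities in fact hold with equality, are the side case. Your one-sentence fallback is the proof that has to be written out in full.

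When you write it out, note that the inequality you quote, $|\alpha\bar\alpha'|+|\beta\bar\beta'|\le 1$, only handles the diagonal block. For the off-diagonal block, take $v=\alpha'\hat{k}_{x'}\oplus\beta'\hat{k}_{y'}$. Then
\[
\langle \mathrm{N}u,v\rangle=\beta\bar{\alpha}'\langle \mathrm{P}\hat{k}_y,\hat{k}_{x'}\rangle+\alpha\bar{\beta}'\langle \mathrm{Q}\hat{k}_x,\hat{k}_{y'}\rangle ,
\]
so the weights pair crosswise and what you need is $|\beta\alpha'|+|\alpha\beta'|\le 1$. This follows from Cauchy--Schwarz applied to $(|\beta|,|\alpha|)$ and $(|\alpha'|,|\beta'|)$. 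Together with your reading $\|\mathrm{P}\|_{\mathbf{ber}}=\sup_{x\in X_1,\,y\in X_2}|\langle \mathrm{P}\hat{k}_y,\hat{k}_x\rangle|$, this closes the argument.

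A cleaner organization is the general estimate $\|[\mathrm{T}_{ij}]\|_{\mathbf{ber}}\le\|[\|\mathrm{T}_{ij}\|_{\mathbf{ber}}]\|$. It comes from $|\langle [\mathrm{T}_{ij}]u,v\rangle|\le\sum_{i,j}|\beta_i|\,\|\mathrm{T}_{ij}\|_{\mathbf{ber}}\,|\alpha_j|$ with unit weight vectors $(\alpha_j)$ and $(\beta_i)$. Both claimed inequalities then follow because $\mathrm{diag}(a,b)$ and $\begin{bmatrix}0&a\\ b&0\end{bmatrix}$ have operator norm $\max\{a,b\}$ for $a,b\ge 0$. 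This covers every choice of weights at once, including the disjoint-union model (weights $(1,0)$ and $(0,1)$), so the convention question no longer matters.
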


\section{Upper bound of the \texorpdfstring{$\mathrm{A}$}~-Berezin number of an operator}
In this section, we derive the upper bound of the Berezin number and $\mathrm{A}$-Berezin number of an operator with the help of the Moore-Penrose inverse. The next lemma, which is frequently used for establishing our results, also generalizes Lemma \ref{Moore-Penrose inverse}.
\begin{lemma}\label{Gen-Moore-Pen}
Let $\mathrm{T}\in\mathcal{B}_\mathrm{A}(\mathcal{H})$ has closed range with the pairs $(\mathrm{A},\mathcal{R}(\mathrm{T}))$ and $(\mathrm{A},\mathcal{N}(\mathrm{T}))$ are compatible, then
    \begin{equation*}
    |\langle \mathrm{T}u,v\rangle_\mathrm{A}|^2\leq\langle \mathrm{T}^{\#_\mathrm{A}}\mathrm{T}u,u\rangle_\mathrm{A}\langle \mathrm{TT}^{\dag_\mathrm{A}} v,v\rangle_\mathrm{A}~~\mbox{for}~u,v\in\mathcal{H}.    
    \end{equation*}
\end{lemma}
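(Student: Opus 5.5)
By Theorem \ref{compatibletheorem}, the compatibility hypotheses guarantee that $\mathrm{T}$ admits an $\mathrm{A}$-generalized inverse $\mathrm{T}^{\dag_\mathrm{A}}$. The plan is to mimic the proof of Lemma \ref{Moore-Penrose inverse}, working with the semi-inner product $\langle\cdot,\cdot\rangle_\mathrm{A}$ and letting $\mathrm{P}:=\mathrm{TT}^{\dag_\mathrm{A}}$ play the role of $\mathrm{TT}^\dag$.

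The first step is to record the properties of $\mathrm{P}$ that we need. From $\mathrm{TT}^{\dag_\mathrm{A}}\mathrm{T}=\mathrm{T}$ we get $\mathrm{P}^2=\mathrm{P}$ and $\mathrm{PT}=\mathrm{T}$. From $\mathrm{AP}=\mathrm{P}^*\mathrm{A}$ we get that $\mathrm{P}$ is $\mathrm{A}$-selfadjoint, i.e. $\langle \mathrm{P}x,y\rangle_\mathrm{A}=\langle x,\mathrm{P}y\rangle_\mathrm{A}$ for all $x,y\in\mathcal{H}$. Combining these gives
\[
\langle \mathrm{P}v,v\rangle_\mathrm{A}=\langle \mathrm{P}^2v,v\rangle_\mathrm{A}=\langle \mathrm{P}v,\mathrm{P}v\rangle_\mathrm{A}=\|\mathrm{P}v\|_\mathrm{A}^2\geq 0 .
\]

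The second step rewrites the left-hand side. Using $\mathrm{T}=\mathrm{PT}$ and the $\mathrm{A}$-selfadjointness of $\mathrm{P}$,
\[
\langle \mathrm{T}u,v\rangle_\mathrm{A}=\langle \mathrm{PT}u,v\rangle_\mathrm{A}=\langle \mathrm{T}u,\mathrm{P}v\rangle_\mathrm{A}.
\]
The Cauchy--Schwarz inequality for the semi-inner product (the outer inequality of Lemma \ref{Gen-Cauchy-Sch}) then gives
\[
|\langle \mathrm{T}u,v\rangle_\mathrm{A}|^2\leq \|\mathrm{T}u\|_\mathrm{A}^2\,\|\mathrm{P}v\|_\mathrm{A}^2 .
\]

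The third step identifies the two factors. Since $\mathrm{T}\in\mathcal{B}_\mathrm{A}(\mathcal{H})$, we have $\mathrm{AT}^{\#_\mathrm{A}}=\mathrm{T}^*\mathrm{A}$, and therefore
\[
\|\mathrm{T}u\|_\mathrm{A}^2=\langle \mathrm{AT}u,\mathrm{T}u\rangle=\langle \mathrm{T}^*\mathrm{AT}u,u\rangle=\langle \mathrm{AT}^{\#_\mathrm{A}}\mathrm{T}u,u\rangle=\langle \mathrm{T}^{\#_\mathrm{A}}\mathrm{T}u,u\rangle_\mathrm{A}.
\]
By the first step, $\|\mathrm{P}v\|_\mathrm{A}^2=\langle \mathrm{TT}^{\dag_\mathrm{A}}v,v\rangle_\mathrm{A}$. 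Substituting both identities into the bound from the second step yields the claimed inequality.

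The only step needing care is the first one, specifically justifying that $\mathrm{P}$ is $\mathrm{A}$-selfadjoint directly from the defining relation $\mathrm{ATT}^{\dag_\mathrm{A}}=(\mathrm{TT}^{\dag_\mathrm{A}})^*\mathrm{A}$. The point is that we never need a genuine $\mathrm{A}$-adjoint of $\mathrm{T}^{\dag_\mathrm{A}}$: the computation $\langle \mathrm{AP}x,y\rangle=\langle \mathrm{P}^*\mathrm{A}x,y\rangle=\langle \mathrm{A}x,\mathrm{P}y\rangle$ uses only that defining relation. Everything else is routine.
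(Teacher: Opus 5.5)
Your proposal is correct and follows essentially the same route as the paper. The paper also inserts $\mathrm{TT}^{\dag_\mathrm{A}}$ via $\mathrm{TT}^{\dag_\mathrm{A}}\mathrm{T}=\mathrm{T}$, moves it across using $\mathrm{ATT}^{\dag_\mathrm{A}}=(\mathrm{TT}^{\dag_\mathrm{A}})^*\mathrm{A}$, applies Cauchy--Schwarz for $\langle\cdot,\cdot\rangle_\mathrm{A}$, and collapses $\|\mathrm{TT}^{\dag_\mathrm{A}}v\|_\mathrm{A}^2$ to $\langle \mathrm{TT}^{\dag_\mathrm{A}}v,v\rangle_\mathrm{A}$ by idempotency and $\mathrm{A}$-selfadjointness; stating these as properties of the idempotent $\mathrm{P}$, as you do, is simply a tidier presentation of the paper's chain of equalities.
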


\begin{proof}
By Theorem \ref{compatibletheorem}, it follows that $\mathrm{T}^{\dag_\mathrm{A}}$ exists. Now
 \begin{eqnarray*}
     |\langle \mathrm{T}u,v\rangle_\mathrm{A}|^2 =|\langle \mathrm{AT}u,v\rangle|^2
     =|\langle \mathrm{ATT}^{\dag_\mathrm{A}} \mathrm{T}u,v\rangle|^2
     &=&|\langle \mathrm{T}u,(\mathrm{ATT}^{\dag_\mathrm{A}})^*v\rangle|^2\\
     &=&|\langle \mathrm{T}u,(\mathrm{TT}^{\dag_\mathrm{A}})^*\mathrm{A}v\rangle|^2\\
     &=&|\langle \mathrm{T}u,\mathrm{ATT}^{\dag_\mathrm{A}} v\rangle|^2\\
     &=&|\langle \mathrm{AT}u,\mathrm{TT}^{\dag_\mathrm{A}} v\rangle|^2\\
     &=&|\langle \mathrm{T}u,\mathrm{TT}^{\dag_\mathrm{A}} v\rangle_\mathrm{A}|^2.
     \end{eqnarray*} 
Using Lemma \ref{Gen-Cauchy-Sch}, we have
\begin{eqnarray*}
     |\langle \mathrm{T}u,v\rangle_\mathrm{A}|^2 &\leq&\langle \mathrm{T}u,\mathrm{T}u\rangle_\mathrm{A}\langle \mathrm{TT}^{\dag_\mathrm{A}} v,\mathrm{TT}^{\dag_\mathrm{A}} v\rangle_\mathrm{A}\\
     &=&\langle \mathrm{T}u,\mathrm{T}u\rangle_\mathrm{A}\langle \mathrm{ATT}^{\dag_\mathrm{A}} v,\mathrm{TT}^{\dag_\mathrm{A}} v\rangle\\
     &=&\langle \mathrm{T}u,\mathrm{T}u\rangle_\mathrm{A}\langle (\mathrm{TT}^{\dag_\mathrm{A}})^*\mathrm{A}v,\mathrm{TT}^{\dag_\mathrm{A}} v\rangle\\
     &=&\langle \mathrm{T}u,\mathrm{T}u\rangle_\mathrm{A}\langle \mathrm{A}v,\mathrm{TT}^{\dag_\mathrm{A}} \mathrm{TT}^{\dag_\mathrm{A}} v\rangle\\
     &=&\langle \mathrm{T}u,\mathrm{T}u\rangle_\mathrm{A}\langle \mathrm{A}v,\mathrm{TT}^{\dag_\mathrm{A}} v\rangle\\
     &=&\langle \mathrm{T}u,\mathrm{T}u\rangle_\mathrm{A}\langle (\mathrm{TT}^{\dag_\mathrm{A}})^*\mathrm{A}v,v\rangle\\
     &=&\langle \mathrm{T}u,\mathrm{T}u\rangle_\mathrm{A}\langle \mathrm{ATT}^{\dag_\mathrm{A}} v,v\rangle\\
     &=&\langle \mathrm{T}u,\mathrm{T}u\rangle_\mathrm{A}\langle \mathrm{TT}^{\dag_\mathrm{A}} v,v\rangle_\mathrm{A}\\
     &=&\langle \mathrm{T}^{\#_\mathrm{A}}\mathrm{T}u,u\rangle_\mathrm{A}\langle \mathrm{TT}^{\dag_\mathrm{A}} v,v\rangle_\mathrm{A}.
 \end{eqnarray*}   
\end{proof}

\begin{theorem}\label{aberfZ}
Let $\mathrm{Z}\in\mathcal{B}_\mathrm{A}(\mathcal{H})$ has closed range with the pairs $(\mathrm{A},\mathcal{R}(\mathrm{Z}))$ and $(\mathrm{A},\mathcal{N}(\mathrm{Z}))$ are compatible. Then for $\epsilon\in[0,1]$, 
        \begin{equation*}
         \mathbf{ber}_\mathrm{A}^2(\mathrm{Z})\leq  \frac{1-\epsilon}{2}\mathbf{ber}_\mathrm{A}^\frac{1}{2}(\mathrm{Z}^{\#_\mathrm{A}}\mathrm{Z})\mathbf{ber}_\mathrm{A}(\mathrm{Z}^{\#_\mathrm{A}}\mathrm{Z}+\mathrm{ZZ}^{\dag_\mathrm{A}})+\epsilon\mathbf{ber}_\mathrm{A}(\mathrm{Z}^{\#_\mathrm{A}}\mathrm{Z}). 
        \end{equation*}
\end{theorem}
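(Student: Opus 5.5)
Fix $x\in X$ and write $w=\hat{k}_x'$, so $\|w\|_\mathrm{A}=1$. The plan is to apply Lemma \ref{Gen-Cauchy-Sch} to the pair $u=\mathrm{Z}w$, $v=w$, then estimate its two pieces separately, and finally take the supremum over $x$. Squaring Lemma \ref{Gen-Cauchy-Sch} gives
\begin{equation*}
|\langle \mathrm{Z}w,w\rangle_\mathrm{A}|^2\leq \epsilon\|\mathrm{Z}w\|_\mathrm{A}^2+(1-\epsilon)|\langle \mathrm{Z}w,w\rangle_\mathrm{A}|\,\|\mathrm{Z}w\|_\mathrm{A}.
\end{equation*}
The first term is easy: $\|\mathrm{Z}w\|_\mathrm{A}^2=\langle \mathrm{Z}^{\#_\mathrm{A}}\mathrm{Z}w,w\rangle_\mathrm{A}\leq \mathbf{ber}_\mathrm{A}(\mathrm{Z}^{\#_\mathrm{A}}\mathrm{Z})$. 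In the second term, the factor $\|\mathrm{Z}w\|_\mathrm{A}=\langle \mathrm{Z}^{\#_\mathrm{A}}\mathrm{Z}w,w\rangle_\mathrm{A}^{1/2}$ is likewise at most $\mathbf{ber}_\mathrm{A}^{1/2}(\mathrm{Z}^{\#_\mathrm{A}}\mathrm{Z})$.

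For the remaining factor $|\langle \mathrm{Z}w,w\rangle_\mathrm{A}|$, I would invoke Lemma \ref{Gen-Moore-Pen} with $u=v=w$. This gives
\begin{equation*}
|\langle \mathrm{Z}w,w\rangle_\mathrm{A}|\leq \langle \mathrm{Z}^{\#_\mathrm{A}}\mathrm{Z}w,w\rangle_\mathrm{A}^{1/2}\langle \mathrm{ZZ}^{\dag_\mathrm{A}}w,w\rangle_\mathrm{A}^{1/2}.
\end{equation*}
By the AM--GM inequality this is at most
\begin{equation*}
\tfrac12\big(\langle \mathrm{Z}^{\#_\mathrm{A}}\mathrm{Z}w,w\rangle_\mathrm{A}+\langle \mathrm{ZZ}^{\dag_\mathrm{A}}w,w\rangle_\mathrm{A}\big)=\tfrac12\langle(\mathrm{Z}^{\#_\mathrm{A}}\mathrm{Z}+\mathrm{ZZ}^{\dag_\mathrm{A}})w,w\rangle_\mathrm{A},
\end{equation*}
which in turn is at most $\frac12\mathbf{ber}_\mathrm{A}(\mathrm{Z}^{\#_\mathrm{A}}\mathrm{Z}+\mathrm{ZZ}^{\dag_\mathrm{A}})$.

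One point needs checking: both quantities $\langle \mathrm{Z}^{\#_\mathrm{A}}\mathrm{Z}w,w\rangle_\mathrm{A}=\|\mathrm{Z}w\|_\mathrm{A}^2$ and $\langle \mathrm{ZZ}^{\dag_\mathrm{A}}w,w\rangle_\mathrm{A}$ must be nonnegative reals, so that the AM--GM step is legitimate. For the second one, the computation in the proof of Lemma \ref{Gen-Moore-Pen} already shows that $\langle \mathrm{ZZ}^{\dag_\mathrm{A}}w,w\rangle_\mathrm{A}=\|\mathrm{ZZ}^{\dag_\mathrm{A}}w\|_\mathrm{A}^2\geq 0$. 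This follows from the identity $\mathrm{AZZ}^{\dag_\mathrm{A}}=(\mathrm{ZZ}^{\dag_\mathrm{A}})^*\mathrm{A}$ together with the idempotence of $\mathrm{ZZ}^{\dag_\mathrm{A}}$.

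Combining the three bounds,
\begin{equation*}
|\tilde{\mathrm{Z}}_\mathrm{A}(x)|^2\leq \epsilon\,\mathbf{ber}_\mathrm{A}(\mathrm{Z}^{\#_\mathrm{A}}\mathrm{Z})+\frac{1-\epsilon}{2}\mathbf{ber}_\mathrm{A}^{1/2}(\mathrm{Z}^{\#_\mathrm{A}}\mathrm{Z})\,\mathbf{ber}_\mathrm{A}(\mathrm{Z}^{\#_\mathrm{A}}\mathrm{Z}+\mathrm{ZZ}^{\dag_\mathrm{A}}).
\end{equation*}
Taking the supremum over $x\in X$ then yields the stated inequality. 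The only delicate step is the positivity check in the previous paragraph; the rest is a direct combination of earlier lemmas.
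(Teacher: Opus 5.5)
Your proposal is correct and follows essentially the same route as the paper: the squared refined Cauchy--Schwarz inequality (Lemma~\ref{Gen-Cauchy-Sch}) with $u=\mathrm{Z}\hat{k}_x'$, $v=\hat{k}_x'$, then Lemma~\ref{Gen-Moore-Pen} and AM--GM applied to $|\langle \mathrm{Z}\hat{k}_x',\hat{k}_x'\rangle_\mathrm{A}|$, the identity $\|\mathrm{Z}\hat{k}_x'\|_\mathrm{A}^2=\langle \mathrm{Z}^{\#_\mathrm{A}}\mathrm{Z}\hat{k}_x',\hat{k}_x'\rangle_\mathrm{A}$, and a supremum over $x$. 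Your explicit check that $\langle \mathrm{ZZ}^{\dag_\mathrm{A}}w,w\rangle_\mathrm{A}=\|\mathrm{ZZ}^{\dag_\mathrm{A}}w\|_\mathrm{A}^2\geq 0$ makes explicit a point the paper uses only implicitly in its AM--GM step.
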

\begin{proof}
 Let $\hat{k}_x'$ be a $\mathrm{A}$-normalized reproducing kernel of $\mathcal{H}$. Then by Lemma \ref{Gen-Cauchy-Sch}, we have 
      \begin{eqnarray*}
        |\langle \mathrm{Z}\hat{k}_x',\hat{k}_x'\rangle_\mathrm{A}|^2&\leq&(1-\epsilon)\langle \mathrm{Z}\hat{k}_x',\mathrm{Z}\hat{k}_x'\rangle_\mathrm{A}^\frac{1}{2}\langle \hat{k}_x',\hat{k}_x'\rangle_\mathrm{A}^\frac{1}{2}|\langle \mathrm{Z}\hat{k}_x',
          \hat{k}_x'\rangle_\mathrm{A}|+\epsilon\langle \mathrm{Z}\hat{k}_x',\mathrm{Z}\hat{k}_x'\rangle_\mathrm{A}\langle \hat{k}_x',\hat{k}_x'\rangle_\mathrm{A}\\
          &=&(1-\epsilon)\langle \mathrm{Z}\hat{k}_x',\mathrm{Z}\hat{k}_x'\rangle_\mathrm{A}^\frac{1}{2}|\langle \mathrm{Z}\hat{k}_x',\hat{k}_x'\rangle_\mathrm{A}|+\epsilon\langle \mathrm{Z}\hat{k}_x',\mathrm{Z}\hat{k}_x'\rangle_\mathrm{A}\\
          &\leq&(1-\epsilon)\langle \mathrm{Z}\hat{k}_x',\mathrm{Z}\hat{k}_x'\rangle_\mathrm{A}^\frac{1}{2}\langle \mathrm{Z}^{\#_\mathrm{A}}\mathrm{Z}\hat{k}_x',
          \hat{k}_x'\rangle_\mathrm{A}^\frac{1}{2}\langle \mathrm{ZZ}^{\dag_\mathrm{A}}\hat{k}_x',
          \hat{k}_x'\rangle_\mathrm{A}^\frac{1}{2}\\
          &&+\epsilon\langle \mathrm{Z}\hat{k}_x',\mathrm{Z}\hat{k}_x'\rangle_\mathrm{A}(\text{by Lemma}~\ref{Gen-Moore-Pen})\\
          &\leq&(1-\epsilon)\langle \mathrm{Z}\hat{k}_x',\mathrm{Z}\hat{k}_x'\rangle_\mathrm{A}^\frac{1}{2}\frac{\langle \mathrm{Z}^{\#_\mathrm{A}}\mathrm{Z}\hat{k}_x',
          \hat{k}_x'\rangle_\mathrm{A}+\langle \mathrm{ZZ}^{\dag_\mathrm{A}}\hat{k}_x',
          \hat{k}_x'\rangle_\mathrm{A}}{2}\\ &&+\epsilon\langle \mathrm{Z}\hat{k}_x',\mathrm{Z}\hat{k}_x'\rangle_\mathrm{A} ~ (\text{by the arithmetic-geometric mean inequality})\\
          &=&(1-\epsilon)\langle \mathrm{Z}\hat{k}_x',\mathrm{Z}\hat{k}_x'\rangle_\mathrm{A}^\frac{1}{2}\frac{\langle (\mathrm{Z}^{\#_\mathrm{A}}\mathrm{Z}+\mathrm{ZZ}^{\dag_\mathrm{A}})\hat{k}_x',
          \hat{k}_x'\rangle_\mathrm{A}}{2}+\epsilon\langle \mathrm{Z}\hat{k}_x',\mathrm{Z}\hat{k}_x'\rangle_\mathrm{A}\\
          &=&\frac{1-\epsilon}{2}\langle \mathrm{Z}^{\#_\mathrm{A}}\mathrm{Z}\hat{k}_x',\hat{k}_x'\rangle_\mathrm{A}^\frac{1}{2}\langle (\mathrm{Z}^{\#_\mathrm{A}}\mathrm{Z}+\mathrm{ZZ}^{\dag_\mathrm{A}})\hat{k}_x',
          \hat{k}_x'\rangle_\mathrm{A}+\epsilon\langle \mathrm{Z}^{\#_\mathrm{A}}\mathrm{Z}\hat{k}_x',\hat{k}_x'\rangle_\mathrm{A}\\
          &\leq&\frac{1-\epsilon}{2}\mathbf{ber}_\mathrm{A}^\frac{1}{2}(\mathrm{Z}^{\#_\mathrm{A}}\mathrm{Z})\mathbf{ber}_\mathrm{A}(\mathrm{Z}^{\#_\mathrm{A}}\mathrm{Z}+\mathrm{ZZ}^{\dag_\mathrm{A}})+\epsilon\mathbf{ber}_\mathrm{A}(\mathrm{Z}^{\#_\mathrm{A}}\mathrm{Z}).
          \end{eqnarray*}
 Now taking supremum over all $x \in X$, we get our required result.
\end{proof}

\begin{remark}
In \cite[Corollary $2.5$]{Zamani2by2}, it is given that 
\begin{equation}\label{Hubanber}
\mathbf{ber}_\mathrm{A}^2(\mathrm{Z})\leq \frac{1}{2}\mathbf{ber}_\mathrm{A}(\mathrm{Z}^{\#_\mathrm{A}}\mathrm{Z}+\mathrm{ZZ}^{\#_\mathrm{A}}).  
\end{equation} 
Also, \cite[Theorem 3.5]{Al-Twaijry-Siberian} gives the following inequality
\begin{equation}\label{Siberian}
\mathbf{ber}_\mathrm{A}^2(\mathrm{Z})\leq \frac{1}{2}\|\mathrm{ZZ}^{\#_\mathrm{A}}+\mathrm{Z}^{\#_\mathrm{A}}\mathrm{Z}\|_{\mathbf{ber}_\mathrm{A}}.  
\end{equation}
Furthermore, from \cite[Theorem $3$]{Albeladi-New-Approach}, we have
    \begin{equation}\label{New-Approach}
\mathbf{ber}_\mathrm{A}^2(\mathrm{Z})\leq \frac{1}{2}\mathbf{ber}_\mathrm{A}(\mathrm{Z}^2)+\frac{1}{4}\|\mathrm{ZZ}^{\#_\mathrm{A}}+\mathrm{Z}^{\#_\mathrm{A}}\mathrm{Z}\|_{\mathbf{ber}_\mathrm{A}}.
\end{equation} 
Take $\mathrm{A}=\begin{bmatrix}
        \frac{1}{3}&0\\
        0&\frac{1}{5}
    \end{bmatrix},\ \mathrm{Z}=\begin{bmatrix}
        1&1\\
        0&0
    \end{bmatrix}$. Then from \eqref{Hubanber} (also from \eqref{Siberian}) we get $\mathbf{ber}_\mathrm{A}^2(\mathrm{Z})\leq 1.834$ and inequality \eqref{New-Approach} gives $ \mathbf{ber}_\mathrm{A}^2(\mathrm{Z})\leq 1.4166$,  whereas from Theorem \ref{aberfZ} with $\epsilon=0$, we get $ \mathbf{ber}_\mathrm{A}^2(\mathrm{Z})\leq 1.291$.
\end{remark}

Taking $\mathrm{A}=\mathrm{I}$, we have the following result.

\begin{corollary}\label{ber2T=XY}
Let $\mathrm{Z}\in \mathcal{CR(H)}$. Then for $\epsilon\in[0,1]$,
        \begin{equation*}
         \mathbf{ber}^2(\mathrm{Z})\leq  \frac{1-\epsilon}{2}\mathbf{ber}^\frac{1}{2}\left(|\mathrm{Z}|^2\right)\left\||\mathrm{Z}|^2+\mathrm{ZZ}^\dag\right\|_{\mathbf{ber}}+\epsilon\left\||\mathrm{Z}|^2\right\|_{\mathbf{ber}}. 
        \end{equation*}
\end{corollary}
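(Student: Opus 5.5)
The plan is to specialize Theorem \ref{aberfZ} to $\mathrm{A}=\mathrm{I}$ and then make two small adjustments to the right-hand side.

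First I check that the hypotheses hold automatically. When $\mathrm{A}=\mathrm{I}$ we have $\mathcal{B}_\mathrm{A}(\mathcal{H})=\mathcal{B(H)}$, since $\mathcal{R}(\mathrm{Z}^*)\subseteq\mathcal{H}=\mathcal{R}(\mathrm{I})$. For any closed subspace $V$, the pair $(\mathrm{I},V)$ is compatible, because $V\oplus V^{\perp}=\mathcal{H}$; the orthogonal projection onto $V$ lies in $\mathcal{P}(\mathrm{I},V)$. So for $\mathrm{Z}\in\mathcal{CR(H)}$ both $\mathcal{R}(\mathrm{Z})$ and $\mathcal{N}(\mathrm{Z})$ give compatible pairs.

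Next I identify the objects in Theorem \ref{aberfZ} at $\mathrm{A}=\mathrm{I}$:
\begin{itemize}
\item $\mathrm{Z}^{\#_\mathrm{I}}=\mathrm{I}^\dag\mathrm{Z}^*\mathrm{I}=\mathrm{Z}^*$, so $\mathrm{Z}^{\#_\mathrm{A}}\mathrm{Z}=|\mathrm{Z}|^2$.
\item $\mathrm{Z}^{\dag_\mathrm{I}}=\mathrm{Z}^\dag$, since the defining relations become the Penrose equations.
\item $\hat{k}_x'=\hat{k}_x$, so $\mathbf{ber}_\mathrm{I}$ and $\|\cdot\|_{\mathbf{ber}_\mathrm{I}}$ are the usual $\mathbf{ber}$ and $\|\cdot\|_{\mathbf{ber}}$.
\end{itemize}
Theorem \ref{aberfZ} then gives
\[
\mathbf{ber}^2(\mathrm{Z})\leq \frac{1-\epsilon}{2}\mathbf{ber}^{\frac12}(|\mathrm{Z}|^2)\,\mathbf{ber}(|\mathrm{Z}|^2+\mathrm{ZZ}^\dag)+\epsilon\,\mathbf{ber}(|\mathrm{Z}|^2).
\]

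Finally I pass from $\mathbf{ber}$ to $\|\cdot\|_{\mathbf{ber}}$ in the last two factors. The general inequality $\mathbf{ber}(\mathrm{T})\leq\|\mathrm{T}\|_{\mathbf{ber}}$ suffices for this. It is also an equality here, because $|\mathrm{Z}|^2$ and $|\mathrm{Z}|^2+\mathrm{ZZ}^\dag$ are positive: $\mathrm{ZZ}^\dag$ is an orthogonal projection by relations (a) and (c). Since $\epsilon\in[0,1]$, all coefficients are nonnegative, so the replacement preserves the inequality.

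There is no real obstacle. The only point needing a line of justification is that every closed subspace is compatible with $\mathrm{I}$, so the hypotheses of Theorem \ref{aberfZ} drop out.
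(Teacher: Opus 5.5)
Your proposal is correct and follows the paper's approach: the paper obtains this corollary simply by setting $\mathrm{A}=\mathrm{I}$ in Theorem \ref{aberfZ}. Your checks that the compatibility hypotheses hold automatically, that $\mathrm{Z}^{\#_\mathrm{I}}=\mathrm{Z}^*$ and $\mathrm{Z}^{\dag_\mathrm{I}}=\mathrm{Z}^\dag$, and that $\mathbf{ber}(\cdot)\le\|\cdot\|_{\mathbf{ber}}$ (with equality here since $|\mathrm{Z}|^2$ and $|\mathrm{Z}|^2+\mathrm{ZZ}^\dag$ are positive) fill in details the paper leaves implicit.
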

       
\begin{corollary}\label{ber2XY}
 Let $\mathrm{P,Q}$ be two $n\times n$ matrices and $\epsilon\in[0,1]$. Then 
 \begin{eqnarray*}
    \mathbf{ber}^2\left(\begin{bmatrix}
              0&\mathrm{P}\\
              \mathrm{Q}&0  
    \end{bmatrix}\right)&\leq&\frac{1-\epsilon}{2}\max \left\{\mathbf{ber}^\frac{1}{2}(|\mathrm{P}|^2),\mathbf{ber}^\frac{1}{2}(|\mathrm{Q}|^2)\right\}\\
    &&\max \left\{\mathbf{ber}(|\mathrm{P}|^2+\mathrm{Q}(|\mathrm{Q}|^2)^\dag \mathrm{Q}^*),\mathbf{ber}(|\mathrm{Q}|^2+\mathrm{P}(|\mathrm{P}|^2)^\dag \mathrm{P}^*)\right\}\\
    &&+\epsilon\max\left\{\mathbf{ber}(|\mathrm{P}|^2),\mathbf{ber}(|\mathrm{Q}|^2)\right\}.
 \end{eqnarray*}
\end{corollary}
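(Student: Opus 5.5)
The plan is to apply Corollary \ref{ber2T=XY}, or more precisely the pointwise form of Theorem \ref{aberfZ} with $\mathrm{A}=\mathrm{I}$, to the block matrix
\[
\mathrm{Z}=\begin{bmatrix}0&\mathrm{P}\\ \mathrm{Q}&0\end{bmatrix}.
\]
Since $\mathrm{Z}$ is a finite matrix, it has closed range, so $\mathrm{Z}\in\mathcal{CR(H)}$ and $\mathrm{Z}^\dag$ exists. Theorem \ref{aberfZ} is stated with Berezin numbers of positive operators. For positive operators the Berezin number and the Berezin norm coincide, so I will use the version
\[
\mathbf{ber}^2(\mathrm{Z})\leq \tfrac{1-\epsilon}{2}\,\mathbf{ber}^{\frac12}(|\mathrm{Z}|^2)\,\mathbf{ber}(|\mathrm{Z}|^2+\mathrm{ZZ}^\dag)+\epsilon\,\mathbf{ber}(|\mathrm{Z}|^2).
\]

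The first computation is $|\mathrm{Z}|^2=\mathrm{Z}^*\mathrm{Z}=\mathrm{diag}(\mathrm{Q}^*\mathrm{Q},\mathrm{P}^*\mathrm{P})=\mathrm{diag}(|\mathrm{Q}|^2,|\mathrm{P}|^2)$. Next, by \eqref{moor_pen_block_matr} with $\mathrm{M}_2=\mathrm{P}$ and $\mathrm{M}_3=\mathrm{Q}$,
\[
\mathrm{Z}^\dag=\begin{bmatrix}0&(|\mathrm{Q}|^2)^\dag \mathrm{Q}^*\\ (|\mathrm{P}|^2)^\dag\mathrm{P}^*&0\end{bmatrix},
\]
and therefore
\[
\mathrm{ZZ}^\dag=\mathrm{diag}\bigl(\mathrm{P}(|\mathrm{P}|^2)^\dag\mathrm{P}^*,\ \mathrm{Q}(|\mathrm{Q}|^2)^\dag\mathrm{Q}^*\bigr).
\]
Adding the two gives
\[
|\mathrm{Z}|^2+\mathrm{ZZ}^\dag=\mathrm{diag}\bigl(|\mathrm{Q}|^2+\mathrm{P}(|\mathrm{P}|^2)^\dag\mathrm{P}^*,\ |\mathrm{P}|^2+\mathrm{Q}(|\mathrm{Q}|^2)^\dag\mathrm{Q}^*\bigr).
\]
This is exactly the pairing of diagonal entries that appears in the statement.

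The final step applies the diagonal part of Lemma \ref{matrixber}, $\mathbf{ber}(\mathrm{diag}(\mathrm{S},\mathrm{T}))\le\max\{\mathbf{ber}(\mathrm{S}),\mathbf{ber}(\mathrm{T})\}$, to each of $|\mathrm{Z}|^2$ and $|\mathrm{Z}|^2+\mathrm{ZZ}^\dag$. For the factor $\mathbf{ber}^{1/2}(|\mathrm{Z}|^2)$, monotonicity of $t\mapsto t^{1/2}$ gives $\mathbf{ber}^{1/2}(|\mathrm{Z}|^2)\le\max\{\mathbf{ber}^{1/2}(|\mathrm{P}|^2),\mathbf{ber}^{1/2}(|\mathrm{Q}|^2)\}$. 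All the quantities involved are nonnegative, so these bounds can be substituted into the product and the sum in the displayed inequality.

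The main obstacle is the identification $\mathbf{ber}=\|\cdot\|_{\mathbf{ber}}$, and in particular checking that $|\mathrm{Z}|^2+\mathrm{ZZ}^\dag$ is positive. I would argue this from the facts that $\mathrm{ZZ}^\dag$ is an orthogonal projection, by Moore–Penrose condition (c) together with idempotence, and that $|\mathrm{Z}|^2\ge0$. Equivalently, I can bypass the corollary altogether and rerun the proof of Theorem \ref{aberfZ} directly with $\mathrm{A}=\mathrm{I}$; that proof already produces Berezin numbers of the positive operators $|\mathrm{Z}|^2$ and $|\mathrm{Z}|^2+\mathrm{ZZ}^\dag$. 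The remaining point to verify is that the block formula \eqref{moor_pen_block_matr} applies to square $n\times n$ blocks, which is just the special case $\mathrm{M}_1=\mathrm{M}_4=0$ of Theorem \ref{theorem penrose matrix}.
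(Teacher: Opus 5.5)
Your proposal is correct and follows the paper's proof essentially step for step: apply Corollary~\ref{ber2T=XY} to $\mathrm{Z}=\begin{bmatrix}0&\mathrm{P}\\ \mathrm{Q}&0\end{bmatrix}$, use \eqref{moor_pen_block_matr} to write $|\mathrm{Z}|^2$ and $|\mathrm{Z}|^2+\mathrm{ZZ}^\dag$ as block-diagonal operators, and finish with the diagonal case of Lemma~\ref{matrixber}. Your remark that both operators are positive, so that $\|\cdot\|_{\mathbf{ber}}=\mathbf{ber}$ for them, justifies the switch from Berezin norm to Berezin number that the paper makes without comment.
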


\begin{proof}
 Let $\mathrm{Z}=\begin{bmatrix}
            0&\mathrm{P}\\
            \mathrm{Q}&0
  \end{bmatrix}$. Then from \eqref{moor_pen_block_matr} and Corollary \ref{ber2T=XY}, we have
 \begin{eqnarray*}
    \mathbf{ber}^2\left(\begin{bmatrix}
      0&\mathrm{P}\\
      \mathrm{Q}&0  
     \end{bmatrix}\right)
    &\leq&\frac{1-\epsilon}{2}\textbf{ber}^{\frac{1}{2}}\left(\begin{bmatrix}
     |\mathrm{Q}|^2&0\\
     0&|\mathrm{P}|^2
    \end{bmatrix}\right)\\
    &&\textbf{ber}\left(\begin{bmatrix}
              |\mathrm{Q}|^2+\mathrm{P}(|\mathrm{P}|^2)^\dag \mathrm{P}^*&0\\
              0&|\mathrm{P}|^2+\mathrm{Q}(|\mathrm{Q}|^2)^\dag \mathrm{Q}^*
          \end{bmatrix}\right)\\
          &&+\epsilon\textbf{ber}\left(\begin{bmatrix}
              |\mathrm{Q}|^2&0\\
              0&|\mathrm{P}|^2
          \end{bmatrix}\right).
      \end{eqnarray*}
      Applying Lemma \ref{matrixber}, we get our required result.
      \end{proof}

\begin{remark}
 In \cite[Corollary 2.6]{bakherad2018some}, it is given that for $\mathrm{Z}=\begin{bmatrix}
     0&\mathrm{P}\\
     \mathrm{Q}&0
 \end{bmatrix}\in\mathcal{B}(\mathcal{H}_1\oplus\mathcal{H}_2)$, $0\leq p\leq1$ and $r\geq 1$,
 \begin{equation}\label{ber^r(T)bakehard}
     \mathbf{ber}^r(\mathrm{Z})\leq 2^{r-2}\mathbf{ber}^\frac{1}{2}\left(|\mathrm{P}|^{2rp}+|\mathrm{Q}^*|^{2r(1-p)}\right)\mathbf{ber}^\frac{1}{2}\left(|\mathrm{Q}|^{2rp}+|\mathrm{P}^*|^{2r(1-p)}\right).
 \end{equation}
 Let $\mathrm{P}=\begin{bmatrix}
     1&1\\
     0&0
 \end{bmatrix}$ and $\mathrm{Q}=\begin{bmatrix}
     1&0\\
     1&0
 \end{bmatrix}$.
 Then for $r=2,p=\frac{1}{2}$,  we get from \eqref{ber^r(T)bakehard}, 
$\mathbf{ber}^2(\mathrm{Z})\leq 2\sqrt{2}=2.8284$ while from Corollary \ref{ber2XY}, it can be seen that 
 $\mathbf{ber}^2(\mathrm{Z})\leq\sqrt{2}+\frac{2}{3}=2.0808$ for $\epsilon=\frac{1}{3}$. Also for $\epsilon=0$, Corollary \ref{ber2XY} gives $\mathbf{ber}^2(\mathrm{Z})\leq\frac{3}{2}\sqrt{2}=2.1213$.
 Thus, the bound in Corollary \ref{ber2XY} is better than the bound in \eqref{ber^r(T)bakehard}.
\end{remark}

The next result gives an estimation for the upper bound of the Berezin number of an operator.

\begin{theorem}\label{ber2r(Z=XY)}
  Let $\mathrm{Z}\in \mathcal{CR(H)}$. Then for $r\geq 1$
        \begin{equation*}
        \mathbf{ber}^{2r}(\mathrm{Z})\leq  \frac{1}{4}\mathbf{ber}\left(|\mathrm{Z}|^{4r}+(\mathrm{ZZ}^\dag)^{2r}\right)+\frac{1}{2}\mathbf{ber}\left((\mathrm{ZZ}^\dag)^{r}|\mathrm{Z}|^{2r}\right).  
        \end{equation*}
\end{theorem}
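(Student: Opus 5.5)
Fix a normalized reproducing kernel $\hat{k}_x$ and start from Lemma \ref{Moore-Penrose inverse} with $u=v=\hat{k}_x$:
\[
|\langle \mathrm{Z}\hat{k}_x,\hat{k}_x\rangle|^2\leq \langle |\mathrm{Z}|^2\hat{k}_x,\hat{k}_x\rangle\langle \mathrm{ZZ}^\dag\hat{k}_x,\hat{k}_x\rangle .
\]
Both factors on the right are nonnegative, since $|\mathrm{Z}|^2\geq 0$ and $\mathrm{ZZ}^\dag$ is an orthogonal projection. Raise this to the power $r\geq 1$ and apply Lemma \ref{(A)^r><A^r}(1) to each factor, which is legitimate because both operators are positive. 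This gives
\[
|\langle \mathrm{Z}\hat{k}_x,\hat{k}_x\rangle|^{2r}\leq \langle |\mathrm{Z}|^{2r}\hat{k}_x,\hat{k}_x\rangle\langle (\mathrm{ZZ}^\dag)^r\hat{k}_x,\hat{k}_x\rangle .
\]
Each factor is now a real inner product. Using self-adjointness of $|\mathrm{Z}|^{2r}$ and $(\mathrm{ZZ}^\dag)^r$, I rewrite the right-hand side as $\langle |\mathrm{Z}|^{2r}\hat{k}_x,\hat{k}_x\rangle\langle \hat{k}_x,(\mathrm{ZZ}^\dag)^r\hat{k}_x\rangle$.

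Next apply Buzano's inequality (Lemma \ref{Buzano}) with $u=|\mathrm{Z}|^{2r}\hat{k}_x$, $v=(\mathrm{ZZ}^\dag)^r\hat{k}_x$ and $w=\hat{k}_x$. This bounds the product by
\[
\frac12\Big(|\langle |\mathrm{Z}|^{2r}\hat{k}_x,(\mathrm{ZZ}^\dag)^r\hat{k}_x\rangle| + \||\mathrm{Z}|^{2r}\hat{k}_x\|\,\|(\mathrm{ZZ}^\dag)^r\hat{k}_x\|\Big).
\]
The first term equals $|\langle (\mathrm{ZZ}^\dag)^r|\mathrm{Z}|^{2r}\hat{k}_x,\hat{k}_x\rangle|$, which is at most $\mathbf{ber}((\mathrm{ZZ}^\dag)^r|\mathrm{Z}|^{2r})$. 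This is where the factor $\frac12$ on the second term of the statement comes from.

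For the second term I use the arithmetic-geometric mean inequality:
\[
\||\mathrm{Z}|^{2r}\hat{k}_x\|\,\|(\mathrm{ZZ}^\dag)^r\hat{k}_x\|\leq \frac12\big(\langle |\mathrm{Z}|^{4r}\hat{k}_x,\hat{k}_x\rangle+\langle (\mathrm{ZZ}^\dag)^{2r}\hat{k}_x,\hat{k}_x\rangle\big)=\frac12\langle (|\mathrm{Z}|^{4r}+(\mathrm{ZZ}^\dag)^{2r})\hat{k}_x,\hat{k}_x\rangle .
\]
Combined with the outer $\frac12$ from Buzano, this produces the coefficient $\frac14$ in front of $\mathbf{ber}(|\mathrm{Z}|^{4r}+(\mathrm{ZZ}^\dag)^{2r})$. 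Taking the supremum over $x\in X$ then gives the claimed inequality.

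The only delicate step is the application of Lemma \ref{(A)^r><A^r}. It requires positivity of $|\mathrm{Z}|^2$ and of $\mathrm{ZZ}^\dag$, and the latter holds by properties (a) and (c) of the Moore--Penrose inverse. It also requires $\|\hat{k}_x\|=1$, which holds by construction. Everything after that is routine bookkeeping. Note also that $(\mathrm{ZZ}^\dag)^r=\mathrm{ZZ}^\dag$, so the statement could be simplified, but I will keep the paper's form.
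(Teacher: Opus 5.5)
Your proposal is correct and follows the paper's proof step for step: Lemma \ref{Moore-Penrose inverse} at $u=v=\hat{k}_x$, then Lemma \ref{(A)^r><A^r} applied to the two positive factors, then Buzano's inequality with $w=\hat{k}_x$, then the arithmetic--geometric mean inequality, and finally a supremum over $x\in X$. Your side remark that $(\mathrm{ZZ}^\dag)^r=\mathrm{ZZ}^\dag$ is also correct for every real $r\geq 1$, since $\mathrm{ZZ}^\dag$ is an orthogonal projection; the paper does not need it for this theorem.
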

\begin{proof}
    Let $\hat{k}_x$ be a normalized reproducing kernel of $\mathcal{H}$. Then
      \begin{eqnarray*}
          &&|\langle \mathrm{Z}\hat{k}_x,\hat{k}_x\rangle|^{2r}\\
          &\leq&(\langle |\mathrm{Z}|^2\hat{k}_x,\hat{k}_x\rangle\langle \mathrm{ZZ}^\dag\hat{k}_x,\hat{k}_x\rangle)^r(\text{by Lemma}~\ref{Moore-Penrose inverse})\\
          &=&\langle |\mathrm{Z}|^2\hat{k}_x,\hat{k}_x\rangle^r\langle \mathrm{ZZ}^\dag\hat{k}_x,\hat{k}_x\rangle^r\\
          &\leq&\langle |\mathrm{Z}|^{2r}\hat{k}_x,\hat{k}_x\rangle\langle (\mathrm{ZZ}^\dag)^r\hat{k}_x,\hat{k}_x\rangle~(\text{by Lemma}~\ref{(A)^r><A^r})
          \\
          &=&\langle |\mathrm{Z}|^{2r}\hat{k}_x,\hat{k}_x\rangle\langle \hat{k}_x,(\mathrm{ZZ}^\dag)^r\hat{k}_x\rangle \ (\text{as}~\mathrm{ZZ}^\dag~\text{is self-adjoint})\\
          &\leq&\frac{1}{2}\left(\langle |\mathrm{Z}|^{2r}\hat{k}_x,|\mathrm{Z}|^{2r}\hat{k}_x\rangle^\frac{1}{2}\langle (\mathrm{ZZ}^\dag)^r\hat{k}_x,(\mathrm{ZZ}^\dag)^r\hat{k}_x\rangle^\frac{1}{2}+|\langle |\mathrm{Z}|^{2r}\hat{k}_x,(\mathrm{ZZ}^\dag)^r\hat{k}_x\rangle|\right)
        \end{eqnarray*}
        by Lemma \ref{Buzano}. Now, applying the arithmetic-geometric mean inequality, we get,
        \begin{eqnarray*}
          &&|\langle \mathrm{Z}\hat{k}_x,\hat{k}_x\rangle|^{2r}\\
          &\leq&\frac{1}{4}\left(\langle |\mathrm{Z}|^{2r}\hat{k}_x,|\mathrm{Z}|^{2r}\hat{k}_x\rangle+\langle (\mathrm{ZZ}^\dag)^{r}\hat{k}_x,(\mathrm{ZZ}^\dag)^r\hat{k}_x\rangle\right)+\frac{1}{2}|\langle |\mathrm{Z}|^{2r}\hat{k}_x,(\mathrm{ZZ}^\dag)^r\hat{k}_x\rangle|\\
          &=&\frac{1}{4}\left(\langle |\mathrm{Z}|^{4r}\hat{k}_x,\hat{k}_x\rangle+\langle (\mathrm{ZZ}^\dag)^{2r}\hat{k}_x,\hat{k}_x\rangle\right)+\frac{1}{2}|\langle (\mathrm{ZZ}^\dag)^r|\mathrm{Z}|^{2r}\hat{k}_x,\hat{k}_x\rangle|\\
          &=&\frac{1}{4} \langle (|\mathrm{Z}|^{4r}+(\mathrm{ZZ}^\dag)^{2r})\hat{k}_x,\hat{k}_x\rangle+\frac{1}{2}|\langle (\mathrm{ZZ}^\dag)^{r}|\mathrm{Z}|^{2r}\hat{k}_x,\hat{k}_x\rangle|\\
          &\leq&\frac{1}{4}\mathbf{ber}\left(|\mathrm{Z}|^{4r}+(\mathrm{ZZ}^\dag)^{2r}\right)+\frac{1}{2}\mathbf{ber}\left((\mathrm{ZZ}^\dag)^{r}|\mathrm{Z}|^{2r}\right).
      \end{eqnarray*}
      Taking the supremum over all $x\in X$, we get our desired result.
\end{proof}

\begin{corollary}\label{ber2r-XY}
  Let $\mathrm{P}$ and $\mathrm{Q}$ be $n\times n$ matrices. Then for $r\in\mathbb{N}$,
        \begin{eqnarray*}
            &&\mathbf{ber}^{2r}\left(\begin{bmatrix}
              0&\mathrm{P}\\
              \mathrm{Q}&0  
            \end{bmatrix}\right)\\
            &\leq&\frac{1}{4}\max \{\mathbf{ber}(|\mathrm{Q}|^{4r}+\mathrm{P}(|\mathrm{P}|^2)^\dag \mathrm{P}^*),\mathbf{ber}(|\mathrm{P}|^{4r}+\mathrm{Q}(|\mathrm{Q}|^2)^\dag \mathrm{Q}^*)\}\\
            &&+\frac{1}{2}\max\{\mathbf{ber}(\mathrm{P}(|\mathrm{P}|^2)^\dag \mathrm{P}^*|\mathrm{Q}|^{2r}),\mathbf{ber}(\mathrm{Q}(|\mathrm{Q}|^2)^\dag \mathrm{Q}^*|\mathrm{P}|^{2r})\}.
        \end{eqnarray*}
\end{corollary}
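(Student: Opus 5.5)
The plan is to apply Theorem \ref{ber2r(Z=XY)} to the block operator
$$\mathrm{Z}=\begin{bmatrix} 0&\mathrm{P}\\ \mathrm{Q}&0\end{bmatrix}$$
acting on $\mathbb{C}^n\oplus\mathbb{C}^n$. Every term on the right-hand side turns out to be block diagonal, so Lemma \ref{matrixber} then finishes the argument.

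Since $\mathrm{Z}$ is a finite matrix, it has closed range, so $\mathrm{Z}\in\mathcal{CR(H)}$ and Theorem \ref{ber2r(Z=XY)} applies for every $r\in\mathbb{N}$.

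First I compute the ingredients.
\begin{itemize}
\item We have $|\mathrm{Z}|^2=\mathrm{Z}^*\mathrm{Z}=\begin{bmatrix}|\mathrm{Q}|^2&0\\0&|\mathrm{P}|^2\end{bmatrix}$.
\item The continuous functional calculus respects direct sums of positive operators. Hence $|\mathrm{Z}|^{2r}=\begin{bmatrix}|\mathrm{Q}|^{2r}&0\\0&|\mathrm{P}|^{2r}\end{bmatrix}$ and $|\mathrm{Z}|^{4r}=\begin{bmatrix}|\mathrm{Q}|^{4r}&0\\0&|\mathrm{P}|^{4r}\end{bmatrix}$.
\item By \eqref{moor_pen_block_matr}, $\mathrm{Z}^\dag=\begin{bmatrix}0&(|\mathrm{Q}|^2)^\dag\mathrm{Q}^*\\(|\mathrm{P}|^2)^\dag\mathrm{P}^*&0\end{bmatrix}$. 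Multiplying out gives
$$\mathrm{ZZ}^\dag=\begin{bmatrix}\mathrm{P}(|\mathrm{P}|^2)^\dag\mathrm{P}^*&0\\0&\mathrm{Q}(|\mathrm{Q}|^2)^\dag\mathrm{Q}^*\end{bmatrix}.$$
\end{itemize}

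The key simplification is that $\mathrm{ZZ}^\dag$ is idempotent. By the Penrose relation (a), $(\mathrm{ZZ}^\dag)^2=(\mathrm{ZZ}^\dag\mathrm{Z})\mathrm{Z}^\dag=\mathrm{ZZ}^\dag$. Therefore $(\mathrm{ZZ}^\dag)^{k}=\mathrm{ZZ}^\dag$ for every $k\in\mathbb{N}$, in particular for $k=r$ and $k=2r$. This is exactly why the statement is restricted to $r\in\mathbb{N}$ and why no power appears on the projection terms.

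Consequently
$$|\mathrm{Z}|^{4r}+(\mathrm{ZZ}^\dag)^{2r}=\begin{bmatrix}|\mathrm{Q}|^{4r}+\mathrm{P}(|\mathrm{P}|^2)^\dag\mathrm{P}^*&0\\0&|\mathrm{P}|^{4r}+\mathrm{Q}(|\mathrm{Q}|^2)^\dag\mathrm{Q}^*\end{bmatrix},$$
and
$$(\mathrm{ZZ}^\dag)^r|\mathrm{Z}|^{2r}=\begin{bmatrix}\mathrm{P}(|\mathrm{P}|^2)^\dag\mathrm{P}^*|\mathrm{Q}|^{2r}&0\\0&\mathrm{Q}(|\mathrm{Q}|^2)^\dag\mathrm{Q}^*|\mathrm{P}|^{2r}\end{bmatrix}.$$

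Substituting these into Theorem \ref{ber2r(Z=XY)} and applying the diagonal part of Lemma \ref{matrixber} to each of the two Berezin numbers yields the two maxima in the statement.

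The only step needing care is the block computation of $\mathrm{ZZ}^\dag$ together with its idempotence, which lets the powers $r$ and $2r$ collapse. The remaining steps are direct substitution.
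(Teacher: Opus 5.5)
Your proposal is correct and follows the paper's proof: apply Theorem \ref{ber2r(Z=XY)} to $\mathrm{Z}$, use \eqref{moor_pen_block_matr} to see that every term is block diagonal, collapse the powers of $\mathrm{ZZ}^\dag$ by idempotence, and finish with the diagonal part of Lemma \ref{matrixber}. One small aside: the restriction to $r\in\mathbb{N}$ is not really forced by the idempotence step. Since $\mathrm{ZZ}^\dag$ is an orthogonal projection by Penrose relation (c), the functional calculus already gives $(\mathrm{ZZ}^\dag)^r=\mathrm{ZZ}^\dag$ for every real $r\geq 1$.
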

\begin{proof}
Let $\mathrm{Z}=\begin{bmatrix}
            0&\mathrm{P}\\
            \mathrm{Q}&0
        \end{bmatrix}$. Then from Theorem \ref{ber2r(Z=XY)} and Equation \eqref{moor_pen_block_matr}, we have,
        \begin{eqnarray*}
            &&\mathbf{ber}^{2r}\left(\begin{bmatrix}
              0&\mathrm{P}\\
              \mathrm{Q}&0  
            \end{bmatrix}\right)\\
            &\leq&\frac{1}{4}\max \left\{\mathbf{ber}(|\mathrm{Q}|^{4r}+(\mathrm{P}(|\mathrm{P}|^2)^\dag \mathrm{P}^*)^{2r}),\mathbf{ber}(|\mathrm{P}|^{4r}+(\mathrm{Q}(|\mathrm{Q}|^2)^\dag \mathrm{Q}^*)^{2r})\right\}\\
            &&+\frac{1}{2}\max\left\{\mathbf{ber}((\mathrm{P}(|\mathrm{P}|^2)^\dag \mathrm{P}^*)^r|\mathrm{Q}|^{2r}), \mathbf{ber}((\mathrm{Q}(|\mathrm{Q}|^2)^\dag \mathrm{Q}^*)^r|\mathrm{P}|^{2r})\right\}.
        \end{eqnarray*}
    Since $\mathrm{P}(|\mathrm{P}|^2)^\dag \mathrm{P}^*$ and $\mathrm{Q}(|\mathrm{Q}|^2)^\dag \mathrm{Q}^*$ are idempotent, hence the result follows.
\end{proof}

\begin{remark}
Taking $\alpha=1$ in \cite[Theorem 2.16]{bhunia2024berezin}, we have
\begin{eqnarray}
&&\textbf{ber}^2\left(\begin{bmatrix}
              0&\mathrm{P}\\
              \mathrm{Q}&0  
        \end{bmatrix}\right)\nonumber\\
        &\leq&\max \left\{\mathbf{ber}(\mathrm{PQ}),\mathbf{ber}(\mathrm{QP})\right\}\nonumber\\
        &&+\frac{1}{2}\max\left\{\|\mathrm{PP}^*+\mathrm{Q}^*\mathrm{Q}\|_\mathbf{ber},\|\mathrm{P}^*\mathrm{P}+\mathrm{QQ}^*\|_\mathbf{ber}\right\}\label{somdatta-ber2XY}.    
\end{eqnarray}
 for $\mathrm{P,Q}\in\mathcal{B(H)}$. If we take $\mathcal{H}=\mathbb{C}^2, \ \mathrm{P}=\begin{bmatrix}
              1&1\\
              0&0  
            \end{bmatrix}$ and $\mathrm{Q}=\begin{bmatrix}
              1&0\\
              1&0  
            \end{bmatrix}$, then we get from \eqref{somdatta-ber2XY},
            $\textbf{ber}^2\left(\begin{bmatrix}
              0&\mathrm{P}\\
              \mathrm{Q}&0  
            \end{bmatrix}\right)\leq 4$ while for $r=1$, $\textbf{ber}^2\left(\begin{bmatrix}
              0&\mathrm{P}\\
              \mathrm{Q}&0  
            \end{bmatrix}\right)\leq 2.25$ (from Corollary \ref{ber2r-XY}). Hence, for this example, Corollary \ref{ber2r-XY} gives a better bound than the bound \eqref{somdatta-ber2XY}.
\end{remark}

\section{\texorpdfstring{$\mathrm{A}$}~-Berezin number and \texorpdfstring{$\mathrm{A}$}~-Berezin norm of sum of two operators}
This section is devoted to generalize the estimation of the Berezin number, $\mathrm{A}$-Berezin number and $\mathrm{A}$-Berezin norm of the sum of two operators via the Moore-Penrose inverse.

\begin{theorem}\label{berA+}
Let $\mathrm{M,N}\in\mathcal{B}_\mathrm{A}(\mathcal{H})$ has closed ranges, with the pairs $(\mathrm{A},\mathcal{R}(\mathrm{M}))$, $(\mathrm{A},\mathcal{N}(\mathrm{M}))$, $(\mathrm{A},\mathcal{R}(\mathrm{N}))$ and $(\mathrm{A},\mathcal{N}(\mathrm{N}))$ are compatible. Then 
\begin{eqnarray*}
   \mathbf{ber}_\mathrm{A}^2(\mathrm{M}+\mathrm{N})&\leq&\frac{1}{2}\left(\mathbf{ber}_\mathrm{A}^2(\mathrm{M}^{\#_\mathrm{A}}\mathrm{M}+i\mathrm{N}^{\#_\mathrm{A}}\mathrm{N})+\mathbf{ber}_\mathrm{A}^2(\mathrm{MM}^{\dag_\mathrm{A}}+i\mathrm{NN}^{\dag_\mathrm{A}})\right)\\
   &&+\mathbf{ber}_\mathrm{A}(\mathrm{N}^{\#_\mathrm{A}}\mathrm{M})+\sqrt{\mathbf{ber}_\mathrm{A}(\mathrm{M}^{\#_\mathrm{A}}\mathrm{M})\mathbf{ber}_\mathrm{A}(\mathrm{N}^{\#_\mathrm{A}}\mathrm{N})}.  
\end{eqnarray*}
\end{theorem}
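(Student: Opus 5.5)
Fix an $\mathrm{A}$-normalized kernel $\hat{k}_x'$ and write $u=\hat{k}_x'$. The plan is to bound $|\langle(\mathrm{M}+\mathrm{N})u,u\rangle_\mathrm{A}|^2$ at this one point and then take the supremum over $x\in X$. The natural expansion is
\begin{equation*}
|\langle \mathrm{M}u,u\rangle_\mathrm{A}+\langle \mathrm{N}u,u\rangle_\mathrm{A}|^2\leq |\langle \mathrm{M}u,u\rangle_\mathrm{A}|^2+|\langle \mathrm{N}u,u\rangle_\mathrm{A}|^2+2|\langle \mathrm{M}u,u\rangle_\mathrm{A}\langle u,\mathrm{N}u\rangle_\mathrm{A}|.
\end{equation*}

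\emph{Cross term.} Apply Lemma \ref{Gen-Buzano} with $w=u$ (so $\|w\|_\mathrm{A}=1$) to $\mathrm{M}u$ and $\mathrm{N}u$. This gives
\begin{equation*}
2|\langle \mathrm{M}u,u\rangle_\mathrm{A}\langle u,\mathrm{N}u\rangle_\mathrm{A}|\leq |\langle \mathrm{M}u,\mathrm{N}u\rangle_\mathrm{A}|+\|\mathrm{M}u\|_\mathrm{A}\|\mathrm{N}u\|_\mathrm{A}.
\end{equation*}
Since $\mathrm{N}\in\mathcal{B}_\mathrm{A}(\mathcal{H})$, we have $\langle \mathrm{M}u,\mathrm{N}u\rangle_\mathrm{A}=\langle \mathrm{N}^{\#_\mathrm{A}}\mathrm{M}u,u\rangle_\mathrm{A}$, which is at most $\mathbf{ber}_\mathrm{A}(\mathrm{N}^{\#_\mathrm{A}}\mathrm{M})$ in modulus. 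Also $\|\mathrm{M}u\|_\mathrm{A}^2=\langle \mathrm{M}^{\#_\mathrm{A}}\mathrm{M}u,u\rangle_\mathrm{A}\leq\mathbf{ber}_\mathrm{A}(\mathrm{M}^{\#_\mathrm{A}}\mathrm{M})$, and likewise for $\mathrm{N}$. Together these produce the last two terms of the statement.

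\emph{Diagonal terms.} By Lemma \ref{Gen-Moore-Pen} applied to $\mathrm{M}$ and to $\mathrm{N}$ (the compatibility hypotheses guarantee that $\mathrm{M}^{\dag_\mathrm{A}}$ and $\mathrm{N}^{\dag_\mathrm{A}}$ exist), set
\begin{equation*}
a=\langle \mathrm{M}^{\#_\mathrm{A}}\mathrm{M}u,u\rangle_\mathrm{A},\quad b=\langle \mathrm{MM}^{\dag_\mathrm{A}}u,u\rangle_\mathrm{A},\quad c=\langle \mathrm{N}^{\#_\mathrm{A}}\mathrm{N}u,u\rangle_\mathrm{A},\quad d=\langle \mathrm{NN}^{\dag_\mathrm{A}}u,u\rangle_\mathrm{A}.
\end{equation*}
Then $|\langle \mathrm{M}u,u\rangle_\mathrm{A}|^2+|\langle \mathrm{N}u,u\rangle_\mathrm{A}|^2\leq ab+cd$. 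These four numbers are real and nonnegative. For $a$ and $c$ this holds because $\langle \mathrm{M}^{\#_\mathrm{A}}\mathrm{M}u,u\rangle_\mathrm{A}=\|\mathrm{M}u\|_\mathrm{A}^2$. For $b$ and $d$ it holds because $\mathrm{A}\mathrm{MM}^{\dag_\mathrm{A}}=(\mathrm{MM}^{\dag_\mathrm{A}})^*\mathrm{A}$ and $\mathrm{MM}^{\dag_\mathrm{A}}$ is idempotent, so that $b=\|\mathrm{MM}^{\dag_\mathrm{A}}u\|_\mathrm{A}^2$, as in the proof of Lemma \ref{Gen-Moore-Pen}.

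The key observation is that for nonnegative reals
\begin{equation*}
ab+cd\leq\tfrac12\big((a^2+c^2)+(b^2+d^2)\big),
\end{equation*}
using $ab\leq\frac{a^2+b^2}{2}$ and $cd\leq\frac{c^2+d^2}{2}$. Moreover $a^2+c^2=|a+ic|^2$, since $a$ and $c$ are real. By linearity of the semi-inner product,
\begin{equation*}
a+ic=\langle(\mathrm{M}^{\#_\mathrm{A}}\mathrm{M}+i\mathrm{N}^{\#_\mathrm{A}}\mathrm{N})u,u\rangle_\mathrm{A},
\end{equation*}
and its modulus squared is at most $\mathbf{ber}_\mathrm{A}^2(\mathrm{M}^{\#_\mathrm{A}}\mathrm{M}+i\mathrm{N}^{\#_\mathrm{A}}\mathrm{N})$. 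In the same way $b^2+d^2=|b+id|^2\leq\mathbf{ber}_\mathrm{A}^2(\mathrm{MM}^{\dag_\mathrm{A}}+i\mathrm{NN}^{\dag_\mathrm{A}})$. This gives the first bracket of the statement.

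Summing the three estimates and taking the supremum over $x$ finishes the proof. The only delicate point, and the one I expect to be the main obstacle, is the $i$-combination trick: it requires verifying that $b$ and $d$ are genuinely real and nonnegative, which I would do via the $\mathrm{A}$-selfadjointness and idempotence of $\mathrm{MM}^{\dag_\mathrm{A}}$ and $\mathrm{NN}^{\dag_\mathrm{A}}$ as indicated above.
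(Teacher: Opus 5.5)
Your proposal is correct and follows the paper's proof essentially step for step. The paper uses the same expansion of $|\langle(\mathrm{M}+\mathrm{N})\hat{k}_x',\hat{k}_x'\rangle_\mathrm{A}|^2$. It bounds the diagonal terms by Lemma \ref{Gen-Moore-Pen}, then applies $ab\le\frac{a^2+b^2}{2}$ and the identity $a^2+c^2=|a+ic|^2$. It handles the cross term with Lemma \ref{Gen-Buzano} at $w=\hat{k}_x'$, exactly as you do. Your explicit check that $\langle \mathrm{MM}^{\dag_\mathrm{A}}u,u\rangle_\mathrm{A}=\|\mathrm{MM}^{\dag_\mathrm{A}}u\|_\mathrm{A}^2\geq 0$ is a detail the paper uses only implicitly.
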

\begin{proof}
Let $\hat{k}_x'$ be a $\mathrm{A}$-normalized reproducing kernel of $\mathcal{H}$. Then 
\begin{eqnarray*}
&&|\langle (\mathrm{M}+\mathrm{N})\hat{k}_x',\hat{k}_x' \rangle_\mathrm{A}|^2\\
&\leq&|\langle \mathrm{M}\hat{k}_x',\hat{k}_x' \rangle_\mathrm{A}|^2+|\langle \mathrm{N}\hat{k}_x',\hat{k}_x'\rangle_\mathrm{A}|^2+2|\langle \mathrm{M}\hat{k}_x',\hat{k}_x'\rangle_\mathrm{A}||\langle \mathrm{N}\hat{k}_x',\hat{k}_x'\rangle_\mathrm{A}|\\
&\leq&\langle \mathrm{M}^{\#_\mathrm{A}}\mathrm{M}\hat{k}_x',\hat{k}_x'\rangle_\mathrm{A}\langle \mathrm{MM}^{\dag_\mathrm{A}} \hat{k}_x',\hat{k}_x' \rangle_\mathrm{A}+\langle \mathrm{N}^{\#_\mathrm{A}}\mathrm{N}\hat{k}_x',\hat{k}_x' \rangle_\mathrm{A}\langle \mathrm{NN}^{\dag_\mathrm{A}} \hat{k}_x',\hat{k}_x'\rangle_\mathrm{A}\\
&&+2|\langle \mathrm{M}\hat{k}_x',\hat{k}_x' \rangle_\mathrm{A}||\langle \mathrm{N}\hat{k}_x',\hat{k}_x'\rangle_\mathrm{A}|~ (\text{by Lemma}~\ref{Gen-Moore-Pen})\\
&\leq&\frac{1}{2}\left(\langle \mathrm{M}^{\#_\mathrm{A}}\mathrm{M}\hat{k}_x',\hat{k}_x' \rangle_\mathrm{A}^2+\langle \mathrm{MM}^{\dag_\mathrm{A}} \hat{k}_x',\hat{k}_x' \rangle_\mathrm{A}^2+\langle \mathrm{N}^{\#_\mathrm{A}}\mathrm{N}\hat{k}_x',\hat{k}_x' \rangle_\mathrm{A}^2+\langle \mathrm{NN}^{\dag_\mathrm{A}} \hat{k}_x',\hat{k}_x' \rangle_\mathrm{A}^2\right)\\
&&+2|\langle \mathrm{M}\hat{k}_x',\hat{k}_x' \rangle_\mathrm{A} \langle\hat{k}_x',\mathrm{N}\hat{k}_x' \rangle_\mathrm{A}|~~(\text{by the arithmetic-geometric mean inequality})\\
&\leq&\frac{1}{2}\left(|\langle (\mathrm{M}^{\#_\mathrm{A}}\mathrm{M}+i\mathrm{N}^{\#_\mathrm{A}}\mathrm{N})\hat{k}_x',\hat{k}_x' \rangle_\mathrm{A}|^2+|\langle (\mathrm{MM}^{\dag_\mathrm{A}}+i\mathrm{NN}^{\dag_\mathrm{A}}) \hat{k}_x',\hat{k}_x' \rangle_\mathrm{A}|^2\right)\\
&&+|\langle \mathrm{N}^{\#_\mathrm{A}}\mathrm{M}\hat{k}_x',\hat{k}_x' \rangle_\mathrm{A}|+ \sqrt{\langle \mathrm{M}^{\#_\mathrm{A}}\mathrm{M}\hat{k}_x',\hat{k}_x'\rangle_\mathrm{A}\langle \mathrm{N}^{\#_\mathrm{A}}\mathrm{N}\hat{k}_x',\hat{k}_x'\rangle_\mathrm{A}}~(\text{by Lemma}~ \ref{Gen-Buzano})\\
&\leq&\frac{1}{2}\left(\textbf{ber}_\mathrm{A}^2(\mathrm{M}^{\#_\mathrm{A}}\mathrm{M}+i\mathrm{N}^{\#_\mathrm{A}}\mathrm{N})+\textbf{ber}_\mathrm{A}^2(\mathrm{MM}^{\dag_\mathrm{A}}+i\mathrm{NN}^{\dag_\mathrm{A}})\right)\\
&&+\textbf{ber}_\mathrm{A}(\mathrm{N}^{\#_\mathrm{A}}\mathrm{M})+ \sqrt{\textbf{ber}_\mathrm{A}(\mathrm{M}^{\#_\mathrm{A}}\mathrm{M})\textbf{ber}_\mathrm{A}(\mathrm{N}^{\#_\mathrm{A}}\mathrm{N})}.
\end{eqnarray*}
Now taking supremum over all $x\in X$, we get the required inequality.
\end{proof}

Taking $\mathrm{A}=\mathrm{I}$ in the above theorem, we have the following result.

\begin{corollary}\label{ber2(A+B)}
Let $\mathrm{M},\mathrm{N}\in\mathcal{CR(H)}$. Then 
\begin{eqnarray*}
    \mathbf{ber}^2(\mathrm{M}+\mathrm{N})&\leq&\frac{1}{2}\left(\mathbf{ber}^2(|\mathrm{M}|^2+i|\mathrm{N}|^2)+\mathbf{ber}^2(\mathrm{MM}^\dag+i\mathrm{NN}^\dag)\right)+\mathbf{ber}( \mathrm{N}^*\mathrm{M})\\
    &&+ \sqrt{\|\mathrm{M}^*\mathrm{M}\|_{\mathbf{ber}}\| \mathrm{N}^* \mathrm{N}\|_\mathbf{ber}} \ .
\end{eqnarray*}
\end{corollary}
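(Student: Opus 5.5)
Corollary \ref{ber2(A+B)} follows by specializing Theorem \ref{berA+} to $\mathrm{A}=\mathrm{I}$, plus one extra observation to pass from Berezin numbers to Berezin norms in the last term. With $\mathrm{A}=\mathrm{I}$ we have $\mathcal{B}_\mathrm{I}(\mathcal{H})=\mathcal{B(H)}$, and $\langle\cdot,\cdot\rangle_\mathrm{I}$ is the original inner product. The $\mathrm{A}$-normalized kernels $\hat{k}_x'$ become the usual $\hat{k}_x$, so $\mathbf{ber}_\mathrm{I}=\mathbf{ber}$ and $\|\cdot\|_{\mathbf{ber}_\mathrm{I}}=\|\cdot\|_{\mathbf{ber}}$. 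The $\mathrm{I}$-adjoint is the usual adjoint: $\mathrm{T}^{\#_\mathrm{I}}=\mathrm{I}^\dag\mathrm{T}^*\mathrm{I}=\mathrm{T}^*$. The $\mathrm{I}$-generalized inverse is the Moore--Penrose inverse, since the defining relations of $\mathrm{T}^{\dag_\mathrm{A}}$ reduce to (a)--(d) when $\mathrm{A}=\mathrm{I}$.

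Next I check the compatibility hypotheses of Theorem \ref{berA+}. For any closed subspace $V$, the orthogonal projection $P_V$ lies in $\mathcal{P}(\mathrm{I},V)$, because $P_V^2=P_V$, $\mathcal{R}(P_V)=V$ and $P_V=P_V^*$. Hence every pair $(\mathrm{I},V)$ with $V$ closed is compatible. Since $\mathrm{M},\mathrm{N}\in\mathcal{CR(H)}$, the subspaces $\mathcal{R}(\mathrm{M})$ and $\mathcal{R}(\mathrm{N})$ are closed, and the null spaces are always closed. So all four pairs in Theorem \ref{berA+} are compatible, and Theorem \ref{berA+} gives
\begin{equation*}
\mathbf{ber}^2(\mathrm{M}+\mathrm{N})\leq\tfrac12\left(\mathbf{ber}^2(|\mathrm{M}|^2+i|\mathrm{N}|^2)+\mathbf{ber}^2(\mathrm{MM}^\dag+i\mathrm{NN}^\dag)\right)+\mathbf{ber}(\mathrm{N}^*\mathrm{M})+\sqrt{\mathbf{ber}(\mathrm{M}^*\mathrm{M})\,\mathbf{ber}(\mathrm{N}^*\mathrm{N})}.
\end{equation*}

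The only remaining point is the last term, which the corollary writes with Berezin norms. Since $\mathrm{M}^*\mathrm{M}$ and $\mathrm{N}^*\mathrm{N}$ are positive, the fact recalled in the introduction gives $\|\mathrm{T}\|_{\mathbf{ber}}=\mathbf{ber}(\mathrm{T})$ for $\mathrm{T}\in\mathcal{B(H)}^+$. So $\mathbf{ber}(\mathrm{M}^*\mathrm{M})=\|\mathrm{M}^*\mathrm{M}\|_{\mathbf{ber}}$ and $\mathbf{ber}(\mathrm{N}^*\mathrm{N})=\|\mathrm{N}^*\mathrm{N}\|_{\mathbf{ber}}$. Even without this equality, $\mathbf{ber}\leq\|\cdot\|_{\mathbf{ber}}$ already yields the stated bound, so the corollary follows in either case.

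There is no real obstacle here. The only step needing care is confirming that the compatibility assumptions hold automatically when $\mathrm{A}=\mathrm{I}$, which the orthogonal projection handles.
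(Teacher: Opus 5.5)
Your proposal is correct and follows the paper's route: the paper obtains this corollary simply by setting $\mathrm{A}=\mathrm{I}$ in Theorem~\ref{berA+}. Your additional checks fill in details the paper leaves implicit. These are that every pair $(\mathrm{I},V)$ with $V$ closed is compatible via the orthogonal projection, and that the last term may be written with Berezin norms because $\mathbf{ber}(\mathrm{T})=\|\mathrm{T}\|_{\mathbf{ber}}$ for positive $\mathrm{T}$ (or simply because $\mathbf{ber}(\mathrm{T})\leq\|\mathrm{T}\|_{\mathbf{ber}}$).
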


\begin{remark}
The following examples are provided to show the improvement obtained by our results.
\begin{enumerate}
    \item In Theorem $2.17$ of \cite{sen2025berezin}, it is given that 
    \begin{equation}\label{ber2A.}
        \mathbf{ber}^2(\mathrm{M}+\mathrm{N})\leq\mathbf{ber}(|\mathrm{M}|+i|\mathrm{N}^*|)\mathbf{ber}(|\mathrm{N}|+i|\mathrm{M}^*|)+\frac{1}{2}\left\||\mathrm{M}|+|\mathrm{N}^*|\right\|_\mathbf{ber}\left\||\mathrm{M}^*|+|\mathrm{N}|\right\|_\mathbf{ber}.
    \end{equation}
 If we take $\mathcal{H}=\mathbb{C}^2$ and $\mathrm{M}=\mathrm{N}=\begin{bmatrix}
     1&1\\
     0&0
 \end{bmatrix}$, then the bound in \eqref{ber2A.} gives $\mathbf{ber}^2(\mathrm{M})\leq 1.1875$
while from Corollary \ref{ber2(A+B)} $\textbf{ber}^2(\mathrm{M})\leq 1$. Therefore, in this case, Corollary \ref{ber2(A+B)} provides sharper bounds than inequality \eqref{ber2A.}.
    \item  In \cite[Theorem 3.7]{mahapatra2025berezin}, it is obtained that  
    \begin{equation}\label{saik3.7}
        \textbf{ber}(\mathrm{M}+\mathrm{N})\leq\frac{1}{\sqrt{2}}\textbf{ber}\left(|\mathrm{M}|^2+|\mathrm{N}|^2+i(|\mathrm{MM}^\dag|+|\mathrm{NN}^\dag|)\right).
    \end{equation}
    Consider $\mathrm{M}=\begin{bmatrix}
        1&1\\
        0&0
    \end{bmatrix}$ and $\mathrm{N}=\begin{bmatrix}
        1&0\\
        1&0
    \end{bmatrix}$. Simple computations shows that Corollary \ref{ber2(A+B)} gives  $\textbf{ber}^2(\mathrm{M}+\mathrm{N})\leq 5.539$, whereas \eqref{saik3.7} gives $\textbf{ber}^2(\mathrm{M}+\mathrm{N})\leq 5.625$.
\end{enumerate}
\end{remark}

The next result gives us another estimation of the $A$-Berezin number of sum of two operators.
\begin{theorem}\label{ber_A^2(M+N)}
Let $\mathrm{M,N}\in\mathcal{B}_\mathrm{A}(\mathcal{H})$ have closed ranges, with the pairs $(\mathrm{A},\mathcal{R}(\mathrm{M}))$, $(\mathrm{A},\mathcal{N}(\mathrm{M}))$, $(\mathrm{A},\mathcal{R}(\mathrm{N}))$ and $(\mathrm{A},\mathcal{N}(\mathrm{N}))$ are compatible. Then
\begin{eqnarray*}
\mathbf{ber}_\mathrm{A}^2(\mathrm{M}+\mathrm{N})&\leq&\mathbf{ber}_\mathrm{A}^2(\mathrm{M})+\mathbf{ber}_\mathrm{A}(\mathrm{M})\mathbf{ber}_\mathrm{A}(\mathrm{N}^{\#_\mathrm{A}}\mathrm{N}+\mathrm{NN}^{\dag_\mathrm{A}})\\
&&+\frac{1}{4}\mathbf{ber}_\mathrm{A}(\mathrm{N}^{\#_\mathrm{A}}\mathrm{N}+\mathrm{NN}^{\#_\mathrm{A}})+\frac{1}{2}\mathbf{ber}_\mathrm{A}(\mathrm{N}^2).
\end{eqnarray*}
\end{theorem}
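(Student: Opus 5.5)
Fix an $\mathrm{A}$-normalized reproducing kernel $\hat{k}_x'$. The plan is to expand the square, bound each of the three resulting terms separately by the corresponding term of the statement, and then take the supremum over $x\in X$. Expanding via the triangle inequality gives
\[
|\langle (\mathrm{M}+\mathrm{N})\hat{k}_x',\hat{k}_x'\rangle_\mathrm{A}|^2\leq |\langle \mathrm{M}\hat{k}_x',\hat{k}_x'\rangle_\mathrm{A}|^2+2|\langle \mathrm{M}\hat{k}_x',\hat{k}_x'\rangle_\mathrm{A}||\langle \mathrm{N}\hat{k}_x',\hat{k}_x'\rangle_\mathrm{A}|+|\langle \mathrm{N}\hat{k}_x',\hat{k}_x'\rangle_\mathrm{A}|^2 .
\]
The first term is at most $\mathbf{ber}_\mathrm{A}^2(\mathrm{M})$ directly.

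For the cross term I keep one factor $|\langle \mathrm{M}\hat{k}_x',\hat{k}_x'\rangle_\mathrm{A}|\le \mathbf{ber}_\mathrm{A}(\mathrm{M})$. For the other factor, Lemma \ref{Gen-Moore-Pen} applied to $\mathrm{N}$ (the compatibility hypotheses make $\mathrm{N}^{\dag_\mathrm{A}}$ available) gives
\[
|\langle \mathrm{N}\hat{k}_x',\hat{k}_x'\rangle_\mathrm{A}|\le \langle \mathrm{N}^{\#_\mathrm{A}}\mathrm{N}\hat{k}_x',\hat{k}_x'\rangle_\mathrm{A}^{1/2}\langle \mathrm{NN}^{\dag_\mathrm{A}}\hat{k}_x',\hat{k}_x'\rangle_\mathrm{A}^{1/2}.
\]
The arithmetic-geometric mean inequality then bounds this by $\tfrac12\langle (\mathrm{N}^{\#_\mathrm{A}}\mathrm{N}+\mathrm{NN}^{\dag_\mathrm{A}})\hat{k}_x',\hat{k}_x'\rangle_\mathrm{A}$. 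Both quantities are nonnegative, since $\mathrm{A}\mathrm{N}^{\#_\mathrm{A}}\mathrm{N}=\mathrm{N}^*\mathrm{AN}\ge 0$ and $\mathrm{ANN}^{\dag_\mathrm{A}}=(\mathrm{NN}^{\dag_\mathrm{A}})^*\mathrm{A}\,\mathrm{NN}^{\dag_\mathrm{A}}\ge0$, as in the proof of Lemma \ref{Gen-Moore-Pen}. The factor $2$ cancels the $\tfrac12$, so the cross term is at most $\mathbf{ber}_\mathrm{A}(\mathrm{M})\mathbf{ber}_\mathrm{A}(\mathrm{N}^{\#_\mathrm{A}}\mathrm{N}+\mathrm{NN}^{\dag_\mathrm{A}})$.

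The last term $|\langle \mathrm{N}\hat{k}_x',\hat{k}_x'\rangle_\mathrm{A}|^2$ is the step that needs care, since it must produce exactly $\tfrac14\mathbf{ber}_\mathrm{A}(\mathrm{N}^{\#_\mathrm{A}}\mathrm{N}+\mathrm{NN}^{\#_\mathrm{A}})+\tfrac12\mathbf{ber}_\mathrm{A}(\mathrm{N}^2)$. Rewrite it as $|\langle \mathrm{N}\hat{k}_x',\hat{k}_x'\rangle_\mathrm{A}\langle \hat{k}_x',\mathrm{N}^{\#_\mathrm{A}}\hat{k}_x'\rangle_\mathrm{A}|$, using $\langle \mathrm{N}\hat{k}_x',\hat{k}_x'\rangle_\mathrm{A}=\langle \hat{k}_x',\mathrm{N}^{\#_\mathrm{A}}\hat{k}_x'\rangle_\mathrm{A}$ up to conjugation. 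Applying Lemma \ref{Gen-Buzano} with $w=\hat{k}_x'$, $u=\mathrm{N}\hat{k}_x'$, $v=\mathrm{N}^{\#_\mathrm{A}}\hat{k}_x'$ gives
\[
\tfrac12\bigl(|\langle \mathrm{N}\hat{k}_x',\mathrm{N}^{\#_\mathrm{A}}\hat{k}_x'\rangle_\mathrm{A}|+\|\mathrm{N}\hat{k}_x'\|_\mathrm{A}\|\mathrm{N}^{\#_\mathrm{A}}\hat{k}_x'\|_\mathrm{A}\bigr).
\]
The first piece equals $|\langle \mathrm{N}^2\hat{k}_x',\hat{k}_x'\rangle_\mathrm{A}|$, using $(\mathrm{N}^{\#_\mathrm{A}})^{\#_\mathrm{A}}$ acting as $\mathrm{N}$ in the $\mathrm{A}$-semi-inner product, i.e. $\langle \mathrm{N}a,\mathrm{N}^{\#_\mathrm{A}}b\rangle_\mathrm{A}=\langle \mathrm{N}^2a,b\rangle_\mathrm{A}$, which follows from $\mathrm{A}\mathrm{N}^{\#_\mathrm{A}}=\mathrm{N}^*\mathrm{A}$.

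For the second piece, apply the AM–GM inequality, $\|\mathrm{N}\hat{k}_x'\|_\mathrm{A}\|\mathrm{N}^{\#_\mathrm{A}}\hat{k}_x'\|_\mathrm{A}\le\tfrac12(\|\mathrm{N}\hat{k}_x'\|_\mathrm{A}^2+\|\mathrm{N}^{\#_\mathrm{A}}\hat{k}_x'\|_\mathrm{A}^2)$. Then identify $\|\mathrm{N}\hat{k}_x'\|_\mathrm{A}^2=\langle \mathrm{N}^{\#_\mathrm{A}}\mathrm{N}\hat{k}_x',\hat{k}_x'\rangle_\mathrm{A}$ and $\|\mathrm{N}^{\#_\mathrm{A}}\hat{k}_x'\|_\mathrm{A}^2=\langle \mathrm{NN}^{\#_\mathrm{A}}\hat{k}_x',\hat{k}_x'\rangle_\mathrm{A}$. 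The second identity is the main technical check: it uses $\mathrm{A}\mathrm{N}^{\#_\mathrm{A}}=\mathrm{N}^*\mathrm{A}$ and the standard identity $\mathrm{A}(\mathrm{N}^{\#_\mathrm{A}})^{\#_\mathrm{A}}=\mathrm{AN}$, which relies on $\mathcal{R}(\mathrm{N}^{\#_\mathrm{A}})\subseteq\overline{\mathcal{R}(\mathrm{A})}$. With it, $\langle \mathrm{A}\mathrm{N}^{\#_\mathrm{A}}b,\mathrm{N}^{\#_\mathrm{A}}b\rangle=\langle \mathrm{AN}\mathrm{N}^{\#_\mathrm{A}}b,b\rangle$.

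Collecting gives $|\langle \mathrm{N}\hat{k}_x',\hat{k}_x'\rangle_\mathrm{A}|^2\le \tfrac14\langle(\mathrm{N}^{\#_\mathrm{A}}\mathrm{N}+\mathrm{NN}^{\#_\mathrm{A}})\hat{k}_x',\hat{k}_x'\rangle_\mathrm{A}+\tfrac12|\langle \mathrm{N}^2\hat{k}_x',\hat{k}_x'\rangle_\mathrm{A}|$. Bounding each inner product by the corresponding $\mathbf{ber}_\mathrm{A}$, adding the three estimates, and taking the supremum over $x$ yields the stated inequality.
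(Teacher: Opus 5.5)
Your proposal is correct and follows the paper's proof step for step. The paper also uses the triangle inequality and expands the square, applies Lemma \ref{Gen-Moore-Pen} plus AM--GM to the cross term, and applies Lemma \ref{Gen-Buzano} with $u=\mathrm{N}\hat{k}_x'$, $v=\mathrm{N}^{\#_\mathrm{A}}\hat{k}_x'$ followed by AM--GM to $|\langle \mathrm{N}\hat{k}_x',\hat{k}_x'\rangle_\mathrm{A}|^2$; your extra checks (positivity of $\mathrm{AN}^{\#_\mathrm{A}}\mathrm{N}$ and $\mathrm{ANN}^{\dag_\mathrm{A}}$, and the identities for $\langle \mathrm{N}\hat{k}_x',\mathrm{N}^{\#_\mathrm{A}}\hat{k}_x'\rangle_\mathrm{A}$ and $\|\mathrm{N}^{\#_\mathrm{A}}\hat{k}_x'\|_\mathrm{A}^2$) just make explicit what the paper leaves implicit. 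One small correction: $\mathrm{A}(\mathrm{N}^{\#_\mathrm{A}})^{\#_\mathrm{A}}=\mathrm{AN}$ follows directly from the defining equation together with the adjoint of $\mathrm{AN}^{\#_\mathrm{A}}=\mathrm{N}^*\mathrm{A}$, namely $(\mathrm{N}^{\#_\mathrm{A}})^*\mathrm{A}=\mathrm{AN}$, so no range-inclusion argument is needed.
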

\begin{proof}
Let $\hat{k}_x'$ be a $\mathrm{A}$-normalized reproducing kernel of $\mathcal{H}$. Then 
\begin{eqnarray*}
   &&|\langle (\mathrm{M}+\mathrm{N})\hat{k}_x',\hat{k}_x'\rangle_\mathrm{A}|^2\\
   &\leq&(|\langle \mathrm{M}\hat{k}_x',\hat{k}_x'\rangle_\mathrm{A}|+|\langle \mathrm{N}\hat{k}_x',\hat{k}_x'\rangle_\mathrm{A}|)^2\\ 
   &=&|\langle \mathrm{M}\hat{k}_x',\hat{k}_x'\rangle_\mathrm{A}|^2+2|\langle \mathrm{M}\hat{k}_x',\hat{k}_x'\rangle_\mathrm{A}||\langle \mathrm{N}\hat{k}_x',\hat{k}_x'\rangle_\mathrm{A}|+|\langle \mathrm{N}\hat{k}_x',\hat{k}_x'\rangle_\mathrm{A}|^2\\
  &\leq&|\langle \mathrm{M}\hat{k}_x',\hat{k}_x'\rangle_\mathrm{A}|^2+2|\langle \mathrm{M}\hat{k}_x',\hat{k}_x'\rangle_\mathrm{A}|\sqrt{\langle \mathrm{N}^{\#_\mathrm{A}}\mathrm{N}\hat{k}_x',\hat{k}_x'\rangle_\mathrm{A}\langle \mathrm{NN}^{\dag_\mathrm{A}}\hat{k}_x',\hat{k}_x'\rangle_\mathrm{A}}\\
  &&+|\langle \mathrm{N}\hat{k}_x',\hat{k}_x'\rangle_\mathrm{A}|^2
\end{eqnarray*}
 by using Lemma \ref{Gen-Moore-Pen}. Thus we have 
\begin{eqnarray*}
    &&|\langle (\mathrm{M}+\mathrm{N})\hat{k}_x',\hat{k}_x'\rangle_\mathrm{A}|^2\\
    &\leq&|\langle \mathrm{M}\hat{k}_x',\hat{k}_x'\rangle_\mathrm{A}|^2+2|\langle \mathrm{M}\hat{k}_x',\hat{k}_x'\rangle_\mathrm{A}|\sqrt{\langle \mathrm{N}^{\#_\mathrm{A}}\mathrm{N}\hat{k}_x',\hat{k}_x'\rangle_\mathrm{A}\langle \mathrm{NN}^{\dag_\mathrm{A}}\hat{k}_x',\hat{k}_x'\rangle_\mathrm{A}}\\
    &&+|\langle \mathrm{N}\hat{k}_x',\hat{k}_x'\rangle_\mathrm{A}\langle \hat{k}_x',\mathrm{N}^{\#_\mathrm{A}}\hat{k}_x'\rangle_\mathrm{A}|\\
    &\leq&|\langle \mathrm{M}\hat{k}_x',\hat{k}_x'\rangle_\mathrm{A}|^2+2|\langle \mathrm{M}\hat{k}_x',\hat{k}_x'\rangle_\mathrm{A}|\sqrt{\langle \mathrm{N}^{\#_\mathrm{A}}\mathrm{N}\hat{k}_x',\hat{k}_x'\rangle_\mathrm{A}\langle \mathrm{NN}^{\dag_\mathrm{A}}\hat{k}_x',\hat{k}_x'\rangle_\mathrm{A}}\\
    &&+\frac{1}{2}(\sqrt{\langle \mathrm{N}\hat{k}_x',\mathrm{N}\hat{k}_x'\rangle_\mathrm{A}\langle \mathrm{N}^{\#_\mathrm{A}}\hat{k}_x',\mathrm{N}^{\#_\mathrm{A}}\hat{k}_x'\rangle_\mathrm{A}}+|\langle \mathrm{N}\hat{k}_x',\mathrm{N}^{\#_\mathrm{A}}\hat{k}_x'\rangle_\mathrm{A}|)(\text{by Lemma}~\ref{Gen-Buzano})\\
    &=&|\langle \mathrm{M}\hat{k}_x',\hat{k}_x'\rangle_\mathrm{A}|^2+2|\langle \mathrm{M}\hat{k}_x',\hat{k}_x'\rangle_\mathrm{A}|\sqrt{\langle \mathrm{N}^{\#_\mathrm{A}}\mathrm{N}\hat{k}_x',\hat{k}_x'\rangle_\mathrm{A}\langle \mathrm{NN}^{\dag_\mathrm{A}}\hat{k}_x',\hat{k}_x'\rangle_\mathrm{A}}\\
    &&+\frac{1}{2}(\sqrt{\langle \mathrm{N}^{\#_\mathrm{A}}\mathrm{N}\hat{k}_x',\hat{k}_x'\rangle_\mathrm{A}\langle \mathrm{NN}^{\#_\mathrm{A}}\hat{k}_x',\hat{k}_x'\rangle_\mathrm{A}}+|\langle \mathrm{N}^2\hat{k}_x',\hat{k}_x'\rangle_\mathrm{A}|)
    \end{eqnarray*}
    Using the arithmetic-geometric mean inequality, we have 
    \begin{eqnarray*}
   &&|\langle (\mathrm{M}+\mathrm{N})\hat{k}_x',\hat{k}_x'\rangle_\mathrm{A}|^2\\
   &\leq& |\langle \mathrm{M}\hat{k}_x',\hat{k}_x'\rangle_\mathrm{A}|^2+|\langle \mathrm{M}\hat{k}_x',\hat{k}_x'\rangle_\mathrm{A}|(\langle \mathrm{N}^{\#_\mathrm{A}}\mathrm{N}\hat{k}_x',\hat{k}_x'\rangle_\mathrm{A}+\langle \mathrm{NN}^{\dag_\mathrm{A}}\hat{k}_x',\hat{k}_x'\rangle_\mathrm{A})\\
    &&+\frac{1}{4}(\langle \mathrm{N}^{\#_\mathrm{A}}\mathrm{N}\hat{k}_x',\hat{k}_x'\rangle_\mathrm{A}+\langle \mathrm{NN}^{\#_\mathrm{A}}\hat{k}_x',\hat{k}_x'\rangle_\mathrm{A})+\frac{1}{2}|\langle \mathrm{N}^2\hat{k}_x',\hat{k}_x'\rangle_\mathrm{A}|\\
    &=&|\langle \mathrm{M}\hat{k}_x',\hat{k}_x'\rangle_\mathrm{A}|^2+|\langle \mathrm{M}\hat{k}_x',\hat{k}_x'\rangle_\mathrm{A}|\langle (\mathrm{N}^{\#_\mathrm{A}}\mathrm{N}+\mathrm{NN}^{\dag_\mathrm{A}})\hat{k}_x',\hat{k}_x'\rangle_\mathrm{A}\\
    &&+\frac{1}{4}\langle (\mathrm{N}^{\#_\mathrm{A}}\mathrm{N}+\mathrm{NN}^{\#_\mathrm{A}})\hat{k}_x',\hat{k}_x'\rangle_\mathrm{A}+\frac{1}{2}|\langle \mathrm{N}^2\hat{k}_x',\hat{k}_x'\rangle_\mathrm{A}|\\
    &\leq&\textbf{ber}_\mathrm{A}^2(\mathrm{M})+\textbf{ber}_\mathrm{A}(\mathrm{M})\textbf{ber}_\mathrm{A}(\mathrm{N}^{\#_\mathrm{A}}\mathrm{N}+\mathrm{NN}^{\dag_\mathrm{A}})+\frac{1}{4}\textbf{ber}_\mathrm{A}(\mathrm{N}^{\#_\mathrm{A}}\mathrm{N}+\mathrm{NN}^{\#_\mathrm{A}})\\
    &&+\frac{1}{2}\textbf{ber}_\mathrm{A}(\mathrm{N}^2).
\end{eqnarray*}
Now taking the supremum over all $x \in X$, we get the required result.
\end{proof}

\begin{remark}
In Proposition $2.2$ of \cite{Gurdal-A-Berezin}, it is given that 
\begin{eqnarray}
\mathbf{ber}_\mathrm{A}^2(\mathrm{M}+\mathrm{N})&\leq&\mathbf{ber}_\mathrm{A}^2(\mathrm{M})+\mathbf{ber}_\mathrm{A}^2(\mathrm{N})\nonumber\\
&&+\frac{1}{2}\left(\textbf{ber}_\mathrm{A}(\mathrm{M}^{\#_\mathrm{A}}\mathrm{M}+\mathrm{NN}^{\#_\mathrm{A}})+\textbf{ber}_\mathrm{A}(\mathrm{NM})\right).~~\label{barsan+prop}  
    \end{eqnarray}
    Let $\mathrm{A}=\begin{bmatrix}
        \frac{1}{3}&0\\
        0&\frac{1}{5}
    \end{bmatrix},\ \mathrm{M}=\begin{bmatrix}
        1&0\\
        1&0
    \end{bmatrix},\ \mathrm{N}=\begin{bmatrix}
        1&1\\
        0&0
    \end{bmatrix}$. Then from \eqref{barsan+prop}, we get $\mathbf{ber}_\mathrm{A}^2(\mathrm{M}+\mathrm{N})\leq 5.134$ whereas from Theorem \ref{ber_A^2(M+N)}, we have $\mathbf{ber}_\mathrm{A}^2(\mathrm{M}+\mathrm{N})\leq 4.417$.
\end{remark}

Setting $\mathrm{A}=\mathrm{I}$ in Theorem \ref{ber_A^2(M+N)}, we have the following result.

\begin{corollary}\label{ber2T}
          Let $\mathrm{M},\mathrm{N}\in \mathcal{CR(H)}$. Then
             \begin{equation*}
                 \mathbf{ber}^2(\mathrm{M}+\mathrm{N})\leq\mathbf{ber}^2(\mathrm{M})+\mathbf{ber}(\mathrm{M})\left\||\mathrm{N}|^2+\mathrm{NN}^\dag\right\|_{\mathbf{ber}}+\frac{1}{4}\left\||\mathrm{N}|^2+|\mathrm{N}^*|^2\right\|_{\mathbf{ber}}+\frac{1}{2}\mathbf{ber}(\mathrm{N}^2).
             \end{equation*}
    \end{corollary}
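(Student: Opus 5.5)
The plan is to specialize Theorem \ref{ber_A^2(M+N)} to $\mathrm{A}=\mathrm{I}$ and then, where the corollary has a Berezin norm in place of a Berezin number, use the fact that these coincide for positive operators. For $\mathrm{A}=\mathrm{I}$ the hypotheses simplify. Every operator lies in $\mathcal{B}_\mathrm{I}(\mathcal{H})=\mathcal{B(H)}$. The compatibility of $(\mathrm{I},V)$ holds for every closed subspace $V$, because $V\oplus V^\perp=\mathcal{H}$. Hence the only remaining requirement is that $\mathrm{M},\mathrm{N}\in\mathcal{CR(H)}$, which is exactly what the corollary assumes.

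Next I identify the operators in the theorem. With $\mathrm{A}=\mathrm{I}$ we have $\mathrm{N}^{\#_\mathrm{I}}=\mathrm{I}^\dag\mathrm{N}^*\mathrm{I}=\mathrm{N}^*$. So $\mathrm{N}^{\#_\mathrm{A}}\mathrm{N}=|\mathrm{N}|^2$ and $\mathrm{NN}^{\#_\mathrm{A}}=\mathrm{NN}^*=|\mathrm{N}^*|^2$. Also $\mathrm{N}^{\dag_\mathrm{I}}=\mathrm{N}^\dag$, since with $\mathrm{A}=\mathrm{I}$ the defining relations of the $\mathrm{A}$-generalized inverse are the Penrose equations, and the Moore--Penrose inverse is unique. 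The normalized kernels $\hat{k}_x'$ become $\hat{k}_x$, and $\mathbf{ber}_\mathrm{I}=\mathbf{ber}$. Substituting into Theorem \ref{ber_A^2(M+N)} gives
\[
\mathbf{ber}^2(\mathrm{M}+\mathrm{N})\leq\mathbf{ber}^2(\mathrm{M})+\mathbf{ber}(\mathrm{M})\,\mathbf{ber}\left(|\mathrm{N}|^2+\mathrm{NN}^\dag\right)+\tfrac14\mathbf{ber}\left(|\mathrm{N}|^2+|\mathrm{N}^*|^2\right)+\tfrac12\mathbf{ber}(\mathrm{N}^2).
\]

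The corollary writes the second and third terms with $\|\cdot\|_{\mathbf{ber}}$. Both arguments are positive operators: $|\mathrm{N}|^2$ and $|\mathrm{N}^*|^2$ are positive, and $\mathrm{NN}^\dag$ is a self-adjoint idempotent, hence an orthogonal projection and positive. For positive operators $\mathbf{ber}(\cdot)=\|\cdot\|_{\mathbf{ber}}$, as recalled in the introduction. Even without that fact, $\mathbf{ber}(\cdot)\leq\|\cdot\|_{\mathbf{ber}}$ always holds, and the coefficients $\mathbf{ber}(\mathrm{M})$ and $\tfrac14$ are nonnegative. Either way the stated inequality follows.

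The only point needing care is the identification $\mathrm{N}^{\dag_\mathrm{I}}=\mathrm{N}^\dag$, and it reduces directly to the uniqueness of the Moore--Penrose inverse.
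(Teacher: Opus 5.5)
Your proposal is correct and takes the same route as the paper, which obtains this corollary by setting $\mathrm{A}=\mathrm{I}$ in Theorem~\ref{ber_A^2(M+N)}. Your checks make explicit what the paper leaves implicit: the compatibility of $(\mathrm{I},V)$, the identities $\mathrm{N}^{\#_\mathrm{I}}=\mathrm{N}^*$ and $\mathrm{N}^{\dag_\mathrm{I}}=\mathrm{N}^\dag$, and the passage from $\mathbf{ber}$ to $\|\cdot\|_{\mathbf{ber}}$ (via positivity, or simply $\mathbf{ber}(\cdot)\leq\|\cdot\|_{\mathbf{ber}}$ with nonnegative coefficients).
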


    \begin{remark}
The following examples highlight the refinements obtained from our results.
\begin{enumerate}
    \item In Theorem $3.9$ of \cite{mahapatra2025berezin} it is given that for $\mathrm{M,N}\in \mathcal{CR(H)}$
\begin{eqnarray}
    \textbf{ber}^2(\mathrm{M}+\mathrm{N})&\leq&\textbf{ber}(\mathrm{M}^*\mathrm{M}+i\mathrm{NN}^\dag )\textbf{ber}(\mathrm{N}^*\mathrm{N}+i\mathrm{MM}^\dag)\nonumber\\
    &&+\frac{1}{2}\|\mathrm{M}^*\mathrm{M}+\mathrm{NN}^\dag\|_\textbf{ber}\|\mathrm{N}^*\mathrm{N}+\mathrm{MM}^\dag\|_\textbf{ber}.\label{3.9Saikat}
\end{eqnarray}
    Consider $\mathrm{M}=\begin{bmatrix}
        1&0\\
        1&0
    \end{bmatrix}$ and $\mathrm{N}=\begin{bmatrix}
        1&1\\
        0&0
    \end{bmatrix}$. Then Corollary \ref{ber2T} gives $\textbf{ber}^2(\mathrm{M}+\mathrm{N})\leq 4.25$, whereas \eqref{3.9Saikat} gives $\textbf{ber}^2(\mathrm{M}+\mathrm{N})\leq 4.75$. 
    \item For $p=\frac{1}{2}$, Corollary $2.22$ of \cite{Somdattakp} gives
\begin{eqnarray}
    \textbf{ber}^2(\mathrm{M}+\mathrm{N})&\leq&\||\mathrm{M}+\mathrm{N}|^2+|(\mathrm{M}+\mathrm{N})^*|^2+|\mathrm{M}-\mathrm{N}|^2+|(\mathrm{M}-\mathrm{N})^*|^2\|_\textbf{ber}.\label{2.22Somdatta}
\end{eqnarray}
  If we consider $\mathrm{M}=\begin{bmatrix}
        1&0\\
        1&0
    \end{bmatrix}$ and $\mathrm{N}=\begin{bmatrix}
        1&1\\
        0&0
    \end{bmatrix},$ then simple computations shows that Corollary \ref{ber2T} gives  $\textbf{ber}^2(\mathrm{M}+\mathrm{N})\leq 4.25$, whereas \eqref{2.22Somdatta} gives $\textbf{ber}^2(\mathrm{M}+\mathrm{N})\leq 6$. Therefore, in this case, the bound obtained in Corollary \ref{ber2T} gives a better bound than that of \eqref{2.22Somdatta}.
    \end{enumerate}
    \end{remark}
        
\begin{corollary}\label{ber2ABCD}
         Let $\mathrm{S}, \mathrm{T}, \mathrm{P}$ and $\mathrm{Q}$ be $n\times n$ matrices. Then
          \begin{eqnarray*}
              \mathbf{ber}^2\left(\begin{bmatrix}
              \mathrm{S}&\mathrm{P}\\
              \mathrm{Q}&\mathrm{T}
          \end{bmatrix}\right)&\leq&\max\left\{\mathbf{ber}^2(\mathrm{S}),\mathbf{ber}^2(\mathrm{T})\right\}+\max\left\{\mathbf{ber}(\mathrm{S}),\mathbf{ber}(\mathrm{T})\right\}\\
          &&\max\left\{\mathbf{ber}(|\mathrm{Q}|^2+\mathrm{P}(|\mathrm{P}|^2)^\dag \mathrm{P}^*),\mathbf{ber}(|\mathrm{P}|^2+\mathrm{Q}(|\mathrm{Q}|^2)^\dag \mathrm{Q}^*)\right\}\\
          &&+\frac{1}{4}\max\left\{\left\||\mathrm{Q}|^2+|\mathrm{P}^*|^2\right\|_{\mathbf{ber}},\left\||\mathrm{P}|^2+|\mathrm{Q}^*|^2\right\|_{\mathbf{ber}}\right\}\\
          &&+\frac{1}{2}\max\left\{\mathbf{ber}(\mathrm{PQ}),\mathbf{ber}(\mathrm{QP})\right\}.
          \end{eqnarray*}
      \end{corollary}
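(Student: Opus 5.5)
We apply Corollary \ref{ber2T} to the splitting of the block matrix into its diagonal and off-diagonal parts,
\[
\mathrm{M}=\begin{bmatrix}\mathrm{S}&0\\0&\mathrm{T}\end{bmatrix},\qquad \mathrm{N}=\begin{bmatrix}0&\mathrm{P}\\\mathrm{Q}&0\end{bmatrix},
\]
so that the given matrix equals $\mathrm{M}+\mathrm{N}$. All operators here are matrices and so have closed range, which puts $\mathrm{M},\mathrm{N}\in\mathcal{CR(H)}$ on $\mathbb{C}^n\oplus\mathbb{C}^n$. Corollary \ref{ber2T} then gives
\[
\mathbf{ber}^2(\mathrm{M}+\mathrm{N})\leq\mathbf{ber}^2(\mathrm{M})+\mathbf{ber}(\mathrm{M})\left\||\mathrm{N}|^2+\mathrm{NN}^\dag\right\|_{\mathbf{ber}}+\frac14\left\||\mathrm{N}|^2+|\mathrm{N}^*|^2\right\|_{\mathbf{ber}}+\frac12\mathbf{ber}(\mathrm{N}^2).
\]
The remaining work is to compute each block expression and then bound it by the block-diagonal estimates.

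The computations are routine. First, $\mathbf{ber}(\mathrm{M})\le\max\{\mathbf{ber}(\mathrm{S}),\mathbf{ber}(\mathrm{T})\}$ by Lemma \ref{matrixber}, and squaring gives the first term. Next, $|\mathrm{N}|^2=\mathrm{N}^*\mathrm{N}=\mathrm{diag}(|\mathrm{Q}|^2,|\mathrm{P}|^2)$ and $|\mathrm{N}^*|^2=\mathrm{NN}^*=\mathrm{diag}(|\mathrm{P}^*|^2,|\mathrm{Q}^*|^2)$. Also $\mathrm{N}^2=\mathrm{diag}(\mathrm{PQ},\mathrm{QP})$. By \eqref{moor_pen_block_matr}, $\mathrm{N}^\dag$ has off-diagonal entries $(|\mathrm{Q}|^2)^\dag\mathrm{Q}^*$ and $(|\mathrm{P}|^2)^\dag\mathrm{P}^*$, so
\[
\mathrm{NN}^\dag=\mathrm{diag}\bigl(\mathrm{P}(|\mathrm{P}|^2)^\dag\mathrm{P}^*,\ \mathrm{Q}(|\mathrm{Q}|^2)^\dag\mathrm{Q}^*\bigr).
\]
It follows that
\[
|\mathrm{N}|^2+\mathrm{NN}^\dag=\mathrm{diag}\bigl(|\mathrm{Q}|^2+\mathrm{P}(|\mathrm{P}|^2)^\dag\mathrm{P}^*,\ |\mathrm{P}|^2+\mathrm{Q}(|\mathrm{Q}|^2)^\dag\mathrm{Q}^*\bigr).
\]

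To finish, apply Corollary \ref{ber norm lemma} to the diagonal matrix $|\mathrm{N}|^2+|\mathrm{N}^*|^2$, and Lemma \ref{matrixber} to $\mathrm{N}^2$. For $|\mathrm{N}|^2+\mathrm{NN}^\dag$ there is a mismatch: the statement has $\mathbf{ber}$ while Corollary \ref{ber2T} has $\|\cdot\|_{\mathbf{ber}}$. This matrix is positive, being a sum of $\mathrm{N}^*\mathrm{N}$ and the orthogonal projection $\mathrm{NN}^\dag$. For positive operators $\|\cdot\|_{\mathbf{ber}}=\mathbf{ber}(\cdot)$, so Lemma \ref{matrixber} bounds it by the maximum of the Berezin numbers of its diagonal blocks. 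Each diagonal block is itself positive, so no information is lost. Collecting the four estimates gives the stated inequality.

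The main obstacle is justifying the Moore--Penrose computation of $\mathrm{NN}^\dag$ from \eqref{moor_pen_block_matr}. Concretely, one must check that $\mathrm{P}(\mathrm{P}^*\mathrm{P})^\dag\mathrm{P}^*=\mathrm{PP}^\dag$, which uses $(\mathrm{P}^*\mathrm{P})^\dag\mathrm{P}^*=\mathrm{P}^\dag$, and that the blocks land in the right positions. The only other point needing care is the $\|\cdot\|_{\mathbf{ber}}=\mathbf{ber}$ switch via positivity.
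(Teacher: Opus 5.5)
Your proposal is correct and follows the paper's proof: the same splitting into $\mathrm{M}=\mathrm{diag}(\mathrm{S},\mathrm{T})$ and the off-diagonal $\mathrm{N}$, then Corollary \ref{ber2T}, then the block form of $\mathrm{N}^\dag$ from \eqref{moor_pen_block_matr}, and finally the block-diagonal estimates of Lemma \ref{matrixber} and Corollary \ref{ber norm lemma}. Your positivity argument turning $\||\mathrm{N}|^2+\mathrm{NN}^\dag\|_{\mathbf{ber}}$ into a Berezin number fills in a step the paper leaves implicit; the identity $\mathrm{P}(\mathrm{P}^*\mathrm{P})^\dag\mathrm{P}^*=\mathrm{PP}^\dag$ that you flag as the main obstacle is not actually needed, because the statement is already written in terms of $\mathrm{P}(|\mathrm{P}|^2)^\dag\mathrm{P}^*$.
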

      \begin{proof}
          Consider the matrix  $\begin{bmatrix}
              \mathrm{S}&\mathrm{P}\\
              \mathrm{Q}&\mathrm{T}
          \end{bmatrix}=\mathrm{M}+\mathrm{N}$ where $\mathrm{M}=\begin{bmatrix}
              \mathrm{S}&0\\
              0&\mathrm{T}
          \end{bmatrix}$ and $\mathrm{N}=\begin{bmatrix}
              0&\mathrm{P}\\
              \mathrm{Q}&0
          \end{bmatrix}.$
          Then from Corollary \ref{ber2T},
          we get our required result by using equation \eqref{moor_pen_block_matr}.
      \end{proof}

    \begin{remark}
     In Corollary 2.4 of \cite{FURTHER BEREZIN NUMBER}, it is given that 
 \begin{equation}\label{Further}
     \textbf{ber}^4\left(\begin{bmatrix}
         \mathrm{S}&\mathrm{P}\\
         \mathrm{P}&\mathrm{S}
     \end{bmatrix}\right)\leq 8\textbf{ber}^4(\mathrm{S})+3\left\||\mathrm{P}|^4+|\mathrm{P}^*|^4\right\|_\textbf{ber}+3\left\||\mathrm{P}|^2+|\mathrm{P}^*|^2\right\|_\textbf{ber}\textbf{ber}(\mathrm{P}^2).
 \end{equation}
Taking $\mathrm{S}=\begin{bmatrix}
     1&1\\
     0&0
 \end{bmatrix}$ and $\mathrm{P}=\begin{bmatrix}
     1&0\\
     1&0
 \end{bmatrix}$, we obtain   $\textbf{ber}^4\left(\begin{bmatrix}
         \mathrm{S}&\mathrm{P}\\
         \mathrm{P}&\mathrm{S}
     \end{bmatrix}\right)\leq 35$ (from \eqref{Further}).\\
     Now setting $\mathrm{T}=\mathrm{S}$ and $\mathrm{Q}=\mathrm{P}$ in Corollary \ref{ber2ABCD}, we get $\textbf{ber}^2\left(\begin{bmatrix}
         \mathrm{S}&\mathrm{P}\\
         \mathrm{P}&\mathrm{S}
     \end{bmatrix}\right)\leq \frac{19}{4}$. So in this case, Corollary \ref{ber2ABCD} gives a better bound than \eqref{Further}.
\end{remark}

Now we note the following scalar inequality, which is needed to prove our next results on the Berezin number of the sum of two operators.
\begin{equation}\label{a+ib}
    |a+b|\leq\sqrt{2}\left|a+ib\right| ~\text{where ~} a,b\in\mathbb{R}.
\end{equation}

\begin{theorem}\label{ber2r(A+B)}
    Let $\mathrm{M,N}\in\mathcal{CR(H)}$, $r\geq 1$. Then 
    \begin{eqnarray*}
   \mathbf{ber}^{2r}(\mathrm{M}+\mathrm{N})&\leq&2^{2r-3}\left(\mathbf{ber}^2\left(|\mathrm{M}|^{2r}+i(\mathrm{NN}^\dag)^r\right)+\mathbf{ber}^2\left(|\mathrm{N}|^{2r}+i(\mathrm{MM}^\dag)^r\right)\right)\\
   &&+2^{2r-4}\mathbf{ber}^2\left(|\mathrm{M}|^{2r}+(\mathrm{NN}^\dag)^r+i(|\mathrm{N}|^{2r}+(\mathrm{MM}^\dag)^r)\right).   
  \end{eqnarray*}
\end{theorem}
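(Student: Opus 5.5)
We bound the Berezin symbol pointwise, treating the two summands separately at first, and only at the end regroup them into the complex combinations appearing in the statement. Fix a normalized reproducing kernel $\hat{k}_x$ and introduce the four nonnegative reals
\begin{equation*}
a=\langle |\mathrm{M}|^{2r}\hat{k}_x,\hat{k}_x\rangle,\quad b=\langle (\mathrm{NN}^\dag)^{r}\hat{k}_x,\hat{k}_x\rangle,\quad c=\langle |\mathrm{N}|^{2r}\hat{k}_x,\hat{k}_x\rangle,\quad d=\langle (\mathrm{MM}^\dag)^{r}\hat{k}_x,\hat{k}_x\rangle .
\end{equation*}
Set $x_0=|\langle \mathrm{M}\hat{k}_x,\hat{k}_x\rangle|$ and $y_0=|\langle \mathrm{N}\hat{k}_x,\hat{k}_x\rangle|$.

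First, apply Lemma \ref{Moore-Penrose inverse} with $u=v=\hat{k}_x$ to get $x_0^2\le \langle |\mathrm{M}|^2\hat{k}_x,\hat{k}_x\rangle\langle \mathrm{MM}^\dag\hat{k}_x,\hat{k}_x\rangle$. Raising to the power $r$ and using Lemma \ref{(A)^r><A^r}(1) for the positive operators $|\mathrm{M}|^2$ and $\mathrm{MM}^\dag$ gives $x_0^{2r}\le ad$, that is, $x_0^r\le\sqrt{ad}$. Likewise $y_0^r\le\sqrt{cb}$.

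Next, from $|\langle(\mathrm{M}+\mathrm{N})\hat{k}_x,\hat{k}_x\rangle|\le x_0+y_0$ and convexity of $t\mapsto t^r$, namely $\left(\frac{x_0+y_0}{2}\right)^r\le\frac{x_0^r+y_0^r}{2}$, we get
\begin{equation*}
|\langle(\mathrm{M}+\mathrm{N})\hat{k}_x,\hat{k}_x\rangle|^{2r}\le 2^{2r-2}\left(x_0^r+y_0^r\right)^2\le 2^{2r-2}\left(\sqrt{ad}+\sqrt{bc}\right)^2 .
\end{equation*}
The key regrouping step is to pair $a$ with $b$ and $c$ with $d$. The Cauchy--Schwarz inequality for the vectors $(\sqrt a,\sqrt b)$ and $(\sqrt d,\sqrt c)$ gives $(\sqrt{ad}+\sqrt{bc})^2\le (a+b)(c+d)$. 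Then AM--GM gives $(a+b)(c+d)\le \tfrac12\big((a+b)^2+(c+d)^2\big)$. Split this half into two quarters, and bound one quarter using $(a+b)^2\le 2(a^2+b^2)$ and $(c+d)^2\le2(c^2+d^2)$. This yields
\begin{equation*}
(a+b)(c+d)\le \tfrac12\left(a^2+b^2+c^2+d^2\right)+\tfrac14\left((a+b)^2+(c+d)^2\right).
\end{equation*}
This matching of the exponents $2^{2r-3}$ and $2^{2r-4}$ is the only delicate point; once it is found the rest is routine.

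Finally, since $a,b,c,d$ are real, rewrite each square as the modulus squared of a Berezin symbol. Self-adjointness of all the operators involved gives
\begin{equation*}
a^2+b^2=\left|\langle(|\mathrm{M}|^{2r}+i(\mathrm{NN}^\dag)^r)\hat{k}_x,\hat{k}_x\rangle\right|^2,\qquad c^2+d^2=\left|\langle(|\mathrm{N}|^{2r}+i(\mathrm{MM}^\dag)^r)\hat{k}_x,\hat{k}_x\rangle\right|^2,
\end{equation*}
and $(a+b)^2+(c+d)^2=\left|\langle(|\mathrm{M}|^{2r}+(\mathrm{NN}^\dag)^r+i(|\mathrm{N}|^{2r}+(\mathrm{MM}^\dag)^r))\hat{k}_x,\hat{k}_x\rangle\right|^2$. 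Each of these is bounded by the square of the corresponding Berezin number. Multiplying through by $2^{2r-2}$ produces the coefficients $2^{2r-3}$ and $2^{2r-4}$, and taking the supremum over $x\in X$ yields the stated inequality.
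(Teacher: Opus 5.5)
Your argument is correct. It reaches exactly the same pointwise estimate as the paper, $2^{2r-3}(a^2+b^2+c^2+d^2)+2^{2r-4}\big((a+b)^2+(c+d)^2\big)$, but you organize the middle step differently.

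The paper expands $(x_0^r+y_0^r)^2=x_0^{2r}+y_0^{2r}+2x_0^ry_0^r$ and handles the pieces separately. The diagonal terms go through $ad+bc\le\frac12(a^2+b^2+c^2+d^2)$. The cross term is bounded by $2^{2r-1}(x_0y_0)^r\le\frac12(x_0+y_0)^{2r}$, and the paper then recycles the estimate $(x_0+y_0)^r\le 2^{r-2}(a+b+c+d)\le 2^{r-\frac32}\big|(a+b)+i(c+d)\big|$. The first of these inequalities is also the one reused in Theorem \ref{ber_r_gen}; the second needs the scalar inequality $|s+t|\le\sqrt2\,|s+it|$.

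You instead keep $(\sqrt{ad}+\sqrt{bc})^2$ intact and regroup it by Cauchy--Schwarz into $(a+b)(c+d)$. After that, one AM--GM suffices, and you need neither the scalar inequality nor the recycled estimate.

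A side benefit of your route is that it shows the final splitting into quarters is purely a weakening to match the stated form. Before that step you already have $|\langle(\mathrm{M}+\mathrm{N})\hat{k}_x,\hat{k}_x\rangle|^{2r}\le 2^{2r-3}\big((a+b)^2+(c+d)^2\big)$, hence
\[
\mathbf{ber}^{2r}(\mathrm{M}+\mathrm{N})\le 2^{2r-3}\,\mathbf{ber}^2\big(|\mathrm{M}|^{2r}+(\mathrm{NN}^\dag)^r+i(|\mathrm{N}|^{2r}+(\mathrm{MM}^\dag)^r)\big).
\]
This already implies the stated inequality, because $(a+b)^2+(c+d)^2\le 2(a^2+b^2)+2(c^2+d^2)$ pointwise, and hence after taking suprema.
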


\begin{proof}
Since $f(t)=t^r, ~r\geq1$ is a convex function on $[0,\infty)$, therefore
\begin{equation}\label{convex}
    f(a+b)\leq\frac{1}{2}f(2a)+\frac{1}{2}f(2b)=2^{r-1}(a^r+b^r) ~\text{for}~ a,b\in [0,\infty).\\
\end{equation}
Let $\hat{k}_x$ be a normalized reproducing kernel of $\mathcal{H}$. Then we have 
    \begin{eqnarray}\label{A+B power r}
        &&(|\langle \mathrm{M}\hat{k}_x,\hat{k}_x\rangle|+|\langle \mathrm{N}\hat{k}_x,\hat{k}_x\rangle|)^r\nonumber\\
        &\leq&2^{r-1}\left(|\langle \mathrm{M}\hat{k}_x,\hat{k}_x\rangle|^r+|\langle \mathrm{N}\hat{k}_x,\hat{k}_x\rangle|^r\right)\nonumber\\
        &\leq&2^{r-1}\left(\langle |\mathrm{M}|^2\hat{k}_x,\hat{k}_x\rangle^\frac{r}{2}\langle \mathrm{MM}^\dag \hat{k}_x,\hat{k}_x\rangle^\frac{r}{2}+\langle |\mathrm{N}|^2\hat{k}_x,\hat{k}_x\rangle^\frac{r}{2}\langle \mathrm{NN}^\dag \hat{k}_x,\hat{k}_x\rangle^\frac{r}{2}\right)\nonumber\\
        &&(\text{by Lemma}~\ref{Moore-Penrose inverse})\nonumber\\
        &\leq&2^{r-2}\left(\langle |\mathrm{M}|^2\hat{k}_x,\hat{k}_x\rangle^r+\langle \mathrm{MM}^\dag\hat{k}_x,\hat{k}_x\rangle^r+\langle |\mathrm{N}|^2\hat{k}_x,\hat{k}_x\rangle^r+\langle \mathrm{NN}^\dag\hat{k}_x,\hat{k}_x\rangle^r\right)\nonumber\\
        &&(\text{by the arithmetic-geometric mean inequality})\nonumber\\
        &\leq&2^{r-2}(\langle |\mathrm{M}|^{2r}\hat{k}_x,\hat{k}_x\rangle+\langle (\mathrm{MM}^\dag)^r\hat{k}_x,\hat{k}_x\rangle+\langle |\mathrm{N}|^{2r}\hat{k}_x,\hat{k}_x\rangle\nonumber\\
        &&+\langle (\mathrm{NN}^\dag)^r\hat{k}_x,\hat{k}_x\rangle)
\end{eqnarray}        
        follows from Lemma \ref{(A)^r><A^r}. Thus we have
\begin{eqnarray}        
       &&(|\langle \mathrm{M}\hat{k}_x,\hat{k}_x\rangle|+|\langle \mathrm{N}\hat{k}_x,\hat{k}_x\rangle|)^r\nonumber\\
       &\leq& 2^{r-2}\left(\langle (|\mathrm{M}|^{2r}+(\mathrm{NN}^\dag)^r)\hat{k}_x,\hat{k}_x\rangle+\langle (|\mathrm{N}|^{2r}+(\mathrm{MM}^\dag)^r)\hat{k}_x,\hat{k}_x\rangle\right)\nonumber\\
       &\leq& 2^{r-\frac{3}{2}}\left|\langle (|\mathrm{M}|^{2r}+(\mathrm{NN}^\dag)^r+i(|\mathrm{N}|^{2r}+(\mathrm{MM}^\dag)^r))\hat{k}_x,\hat{k}_x\rangle\right|.~(\mbox{from}~\eqref{a+ib})\label{A+B^r}
    \end{eqnarray}
  Therefore, 
  \begin{eqnarray*}
      &&|\langle (\mathrm{M}+\mathrm{N})\hat{k}_x,\hat{k}_x\rangle|^{2r}\\
      &\leq&(|\langle \mathrm{M}\hat{k}_x,\hat{k}_x\rangle|+|\langle \mathrm{N}\hat{k}_x,\hat{k}_x\rangle|)^{2r}\\
      &\leq&2^{2r-2}(|\langle \mathrm{M}\hat{k}_x,\hat{k}_x\rangle|^r+|\langle \mathrm{N}\hat{k}_x,\hat{k}_x\rangle|^r)^2~(\text{from}~\eqref{convex})\\
      &\leq&2^{2r-2}(|\langle \mathrm{M}\hat{k}_x,\hat{k}_x\rangle|^{2r}+|\langle \mathrm{N}\hat{k}_x,\hat{k}_x\rangle|^{2r}+2|\langle \mathrm{M}\hat{k}_x,\hat{k}_x\rangle|^r|\langle \mathrm{N}\hat{k}_x,\hat{k}_x\rangle|^r)\\
      &\leq&2^{2r-2}(\langle |\mathrm{M}|^2\hat{k}_x,\hat{k}_x\rangle^r\langle \mathrm{MM}^\dag\hat{k}_x,\hat{k}_x\rangle^r+\langle |\mathrm{N}|^2\hat{k}_x,\hat{k}_x\rangle^r\langle \mathrm{NN}^\dag\hat{k}_x,\hat{k}_x\rangle^r)\\
      &&+2^{2r-1}(|\langle \mathrm{M}\hat{k}_x,\hat{k}_x\rangle||\langle \mathrm{N}\hat{k}_x,\hat{k}_x\rangle|)^r
      \end{eqnarray*}
      follows from Lemma \ref{Moore-Penrose inverse}. Now using Lemma \ref{(A)^r><A^r}, we have 
      \begin{eqnarray*}
      &&|\langle (\mathrm{M}+\mathrm{N})\hat{k}_x,\hat{k}_x\rangle|^{2r}\\
      &\leq&2^{2r-2}(\langle |\mathrm{M}|^{2r}\hat{k}_x,\hat{k}_x\rangle\langle (\mathrm{MM}^\dag)^r\hat{k}_x,\hat{k}_x\rangle+\langle |\mathrm{N}|^{2r}\hat{k}_x,\hat{k}_x\rangle\langle (\mathrm{NN}^\dag)^r\hat{k}_x,\hat{k}_x\rangle)\\
      &&+2^{2r-1}(|\langle \mathrm{M}\hat{k}_x,\hat{k}_x\rangle||\langle \mathrm{N}\hat{k}_x,\hat{k}_x\rangle|)^r\\
      &\leq&2^{2r-3}(\langle |\mathrm{M}|^{2r}\hat{k}_x,\hat{k}_x\rangle^2+\langle (\mathrm{MM}^\dag)^r \hat{k}_x,\hat{k}_x\rangle^2+\langle |\mathrm{N}|^{2r}\hat{k}_x,\hat{k}_x\rangle^2+\langle (\mathrm{NN}^\dag)^r \hat{k}_x,\hat{k}_x\rangle^2)\\
      &&+\frac{1}{2}(|\langle \mathrm{M}\hat{k}_x,\hat{k}_x\rangle|+|\langle \mathrm{N}\hat{k}_x,\hat{k}_x\rangle|)^{2r}\\
      &&(\text{by the arithmetic-geometric mean inequality})\\
      &\leq&2^{2r-3}(|\langle(|\mathrm{M}|^{2r}+i(\mathrm{NN}^\dag)^r)\hat{k}_x,\hat{k}_x\rangle|^2+|\langle(|\mathrm{N}|^{2r}+i(\mathrm{MM}^\dag)^r)\hat{k}_x,\hat{k}_x\rangle|^2)\\
      &&+2^{2r-4}|\langle(|\mathrm{M}|^{2r}+(\mathrm{NN}^\dag)^r+i(|\mathrm{N}|^{2r}+(\mathrm{MM}^\dag)^r))\hat{k}_x,\hat{k}_x\rangle|^2~(\text{by}~\eqref{A+B^r})\\
      &\leq&2^{2r-3}\left(\textbf{ber}^2\left(|\mathrm{M}|^{2r}+i(\mathrm{NN}^\dag)^r\right)+\textbf{ber}^2\left(|\mathrm{N}|^{2r}+i(\mathrm{MM}^\dag)^r\right)\right)\\
      &&+2^{2r-4}\textbf{ber}^2\left(|\mathrm{M}|^{2r}+(\mathrm{NN}^\dag)^r+i(|\mathrm{N}|^{2r}+(\mathrm{MM}^\dag)^r)\right).
  \end{eqnarray*}
  So, taking the supremum over all $x\in X$, we get our desired result.
\end{proof}

\begin{remark}
Taking $r=1$ in Theorem $3.13$ of \cite{mahapatra2025berezin}, we get 
\begin{eqnarray}
    \textbf{ber}^2(\mathrm{M}+\mathrm{N})&\leq&\frac{1}{2}\left(\|(\mathrm{M}^\dag \mathrm{M}+\mathrm{N}^*\mathrm{N})^2\|_\textbf{ber}\|(\mathrm{MM}^*+\mathrm{NN}^\dag)^2\|_\textbf{ber}\right)^\frac{1}{2}\nonumber\\
    &&+\frac{1}{2}\textbf{ber}\left((\mathrm{M}^\dag \mathrm{M}+\mathrm{N}^*\mathrm{N})(\mathrm{MM}^*+\mathrm{NN}^\dag)\right)\label{3.13Saikat}
\end{eqnarray}
for $\mathrm{M,N}\in \mathcal{CR(H)}$. If we consider $\mathrm{M}=\begin{bmatrix}
        1&1\\
        0&0
    \end{bmatrix}$ and $\mathrm{N}=\begin{bmatrix}
        \frac{1}{2}&0\\
        \frac{1}{2}&0
    \end{bmatrix},$ then for $r=1$, Theorem \ref{ber2r(A+B)} gives  $\textbf{ber}^2(\mathrm{M}+\mathrm{N})\leq 2.375$, whereas \eqref{3.13Saikat} gives $\textbf{ber}^2(\mathrm{M}+\mathrm{N})\leq 2.8$. 
\end{remark}

\begin{remark}
Taking $\mathrm{M}=\mathrm{N}$ and $r=1$ in Theorem \ref{ber2r(A+B)}, we have
\begin{equation}\label{ber2rA}
      \mathbf{ber}^{2}(\mathrm{M})\leq\frac{1}{4}\mathbf{ber}^2(|\mathrm{M}|^{2}+i\mathrm{MM}^\dag)+\frac{1}{8}\mathbf{ber}^2(|\mathrm{M}|^{2}+\mathrm{MM}^\dag).   
    \end{equation}
As $|\mathrm{M}|^2+\mathrm{MM}^\dag$ is a positive operator, the bound in \eqref{ber2rA} coincides with the bound given in Corollary 3.10 of \cite{mahapatra2025berezin}.
\end{remark}

\begin{theorem}\label{ber_r_gen}
    Let $\mathrm{M,N}\in\mathcal{CR(H)}$. Then for any $r \geq 1$,
    \begin{equation*}
        \mathbf{ber}^r(\mathrm{M}+\mathrm{N})\leq\frac{2^{r-1}}{\sqrt{2}}\mathbf{ber}\left(|\mathrm{M}|^{2r}+|\mathrm{N}|^{2r}+i((\mathrm{MM}^\dag)^r+(\mathrm{NN}^\dag)^r)\right).
    \end{equation*}
\end{theorem}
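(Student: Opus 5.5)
The proof is essentially a replay of the chain \eqref{A+B power r}--\eqref{A+B^r} from the proof of Theorem \ref{ber2r(A+B)}. The only change is how the four nonnegative terms are grouped before invoking \eqref{a+ib}. Fix a normalized reproducing kernel $\hat{k}_x$. By the triangle inequality and the convexity estimate \eqref{convex},
\begin{equation*}
|\langle (\mathrm{M}+\mathrm{N})\hat{k}_x,\hat{k}_x\rangle|^r\leq\left(|\langle \mathrm{M}\hat{k}_x,\hat{k}_x\rangle|+|\langle \mathrm{N}\hat{k}_x,\hat{k}_x\rangle|\right)^r\leq 2^{r-1}\left(|\langle \mathrm{M}\hat{k}_x,\hat{k}_x\rangle|^r+|\langle \mathrm{N}\hat{k}_x,\hat{k}_x\rangle|^r\right).
\end{equation*}

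Next, apply Lemma \ref{Moore-Penrose inverse} to each of $\mathrm{M}$ and $\mathrm{N}$. This gives $|\langle \mathrm{M}\hat{k}_x,\hat{k}_x\rangle|^r\leq \langle |\mathrm{M}|^2\hat{k}_x,\hat{k}_x\rangle^{r/2}\langle \mathrm{MM}^\dag\hat{k}_x,\hat{k}_x\rangle^{r/2}$, and similarly for $\mathrm{N}$. The arithmetic-geometric mean inequality then bounds each product by half the sum of the $r$-th powers. Lemma \ref{(A)^r><A^r}(1), applied with $r\geq1$ to the positive operators $|\mathrm{M}|^2$, $\mathrm{MM}^\dag$, $|\mathrm{N}|^2$ and $\mathrm{NN}^\dag$ (the latter two are orthogonal projections, hence positive), moves the powers inside the inner products. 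This reproduces exactly \eqref{A+B power r}:
\begin{equation*}
|\langle (\mathrm{M}+\mathrm{N})\hat{k}_x,\hat{k}_x\rangle|^r\leq 2^{r-2}\left(\langle (|\mathrm{M}|^{2r}+|\mathrm{N}|^{2r})\hat{k}_x,\hat{k}_x\rangle+\langle ((\mathrm{MM}^\dag)^r+(\mathrm{NN}^\dag)^r)\hat{k}_x,\hat{k}_x\rangle\right).
\end{equation*}

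Now regroup the terms as $a=\langle (|\mathrm{M}|^{2r}+|\mathrm{N}|^{2r})\hat{k}_x,\hat{k}_x\rangle$ and $b=\langle ((\mathrm{MM}^\dag)^r+(\mathrm{NN}^\dag)^r)\hat{k}_x,\hat{k}_x\rangle$. This grouping, rather than the cross pairing used in \eqref{A+B^r}, is the one matching the statement. Both $a$ and $b$ are real (indeed nonnegative), so \eqref{a+ib} gives $a+b\leq\sqrt2\,|a+ib|$. By linearity, $a+ib=\langle (|\mathrm{M}|^{2r}+|\mathrm{N}|^{2r}+i((\mathrm{MM}^\dag)^r+(\mathrm{NN}^\dag)^r))\hat{k}_x,\hat{k}_x\rangle$. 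Since $2^{r-2}\sqrt2=2^{r-1}/\sqrt2$, we obtain
\begin{equation*}
|\langle (\mathrm{M}+\mathrm{N})\hat{k}_x,\hat{k}_x\rangle|^r\leq\frac{2^{r-1}}{\sqrt2}\,\mathbf{ber}\left(|\mathrm{M}|^{2r}+|\mathrm{N}|^{2r}+i((\mathrm{MM}^\dag)^r+(\mathrm{NN}^\dag)^r)\right).
\end{equation*}
Taking the supremum over $x\in X$ completes the proof.

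I expect no real obstacle here. The only points needing care are confirming that every operator to which Lemma \ref{(A)^r><A^r} is applied is positive, and checking the constant bookkeeping, namely that $2^{r-1}\cdot\tfrac12=2^{r-2}$.
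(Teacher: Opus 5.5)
Your proposal is correct and follows the paper's proof essentially step for step. The paper likewise invokes \eqref{A+B power r}, groups the terms as $|\mathrm{M}|^{2r}+|\mathrm{N}|^{2r}$ and $(\mathrm{MM}^\dag)^r+(\mathrm{NN}^\dag)^r$ rather than using the cross pairing, applies \eqref{a+ib}, uses $2^{r-2}\sqrt{2}=2^{r-1}/\sqrt{2}$, and then takes the supremum over $x\in X$.
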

\begin{proof}
    Let $\hat{k}_x$ be a normalized reproducing kernel of $\mathcal{H}$. Then
    \begin{eqnarray*}
        &&|\langle (\mathrm{M}+\mathrm{N})\hat{k}_x,\hat{k}_x\rangle|^r\\
         &\leq&(|\langle \mathrm{M}\hat{k}_x,\hat{k}_x\rangle|+|\langle \mathrm{N}\hat{k}_x,\hat{k}_x\rangle|)^r\\
        &\leq&2^{r-2}\left(\langle (|\mathrm{M}|^{2r}+|\mathrm{N}|^{2r})\hat{k}_x,\hat{k}_x\rangle+\langle ((\mathrm{MM}^\dag)^r+(\mathrm{NN}^\dag)^r)\hat{k}_x,\hat{k}_x\rangle\right)
         \mbox{(from \eqref{A+B power r})}\\
        &\leq& 2^{r-2}\sqrt{2}\left|\langle (|\mathrm{M}|^{2r}+|\mathrm{N}|^{2r})\hat{k}_x,\hat{k}_x\rangle+i\langle ((\mathrm{MM}^\dag)^r+(\mathrm{NN}^\dag)^r)\hat{k}_x,\hat{k}_x\rangle\right|~(\text{by}~\eqref{a+ib})\\
        &=& 2^{r-2}\sqrt{2}\left|\langle (|\mathrm{M}|^{2r}+|\mathrm{N}|^{2r}+i((\mathrm{MM}^\dag)^r+(\mathrm{NN}^\dag)^r))\hat{k}_x,\hat{k}_x\rangle\right|\\
        &\leq&\frac{2^{r-1}}{\sqrt{2}}\mathbf{ber}\left(|\mathrm{M}|^{2r}+|\mathrm{N}|^{2r}+i((\mathrm{MM}^\dag)^r+(\mathrm{NN}^\dag)^r)\right).
    \end{eqnarray*}
   Taking the supremum over all $x\in X$, we get our required inequality.
\end{proof}
\begin{remark}
    Putting $r=1$ in Theorem \ref{ber_r_gen}, we get Theorem 3.7 of \cite{mahapatra2025berezin}. Thus, our result generalizes Theorem 3.7 of \cite{mahapatra2025berezin}.
\end{remark}
       
Finally, we derive an upper bound for the $\mathrm{A}$-Berezin norm of the sum of two operators using the Moore-Penrose inverse.

\begin{theorem}\label{anorm}
Let $\mathrm{M,N}\in\mathcal{B}_\mathrm{A}(\mathcal{H})$ has closed ranges, with the pairs $(\mathrm{A},\mathcal{R}(\mathrm{M}))$, $(\mathrm{A},\mathcal{N}(\mathrm{M}))$, $(\mathrm{A},\mathcal{R}(\mathrm{N}))$ and $(\mathrm{A},\mathcal{N}(\mathrm{N}))$ are compatible. Then
         \begin{eqnarray*}
    \|\mathrm{M}+\mathrm{N}\|_{\mathbf{ber}_\mathrm{A}}^2&\leq& \mathbf{ber}_\mathrm{A}(\mathrm{M}^{\#_\mathrm{A}}\mathrm{M}+i\mathrm{N}^{\#_\mathrm{A}}\mathrm{N})\mathbf{ber}_\mathrm{A}(\mathrm{MM}^{\dag_\mathrm{A}}+i\mathrm{NN}^{\dag_\mathrm{A}})\\
    &&+\frac{1}{2}\mathbf{ber}_\mathrm{A}(\mathrm{M}^{\#_\mathrm{A}}\mathrm{M}+\mathrm{N}^{\#_\mathrm{A}}\mathrm{N})+\mathbf{ber}_\mathrm{A}(\mathrm{N}^{\#_\mathrm{A}}\mathrm{M}).
\end{eqnarray*}
\end{theorem}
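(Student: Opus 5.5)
Fix $x,y\in X$ and work with the $\mathrm{A}$-normalized kernels $\hat{k}_x',\hat{k}_y'$. I will bound $|\langle(\mathrm{M}+\mathrm{N})\hat{k}_x',\hat{k}_y'\rangle_\mathrm{A}|^2$ pointwise and take the supremum over $x,y$ at the end. The obstacle is that the right-hand side of the statement contains only $\mathrm{A}$-Berezin numbers, i.e.\ diagonal quantities. So I must split the two kernels so that each factor is evaluated at a single point. The tool for this is Lemma \ref{Gen-Moore-Pen}: it produces $\langle \mathrm{M}^{\#_\mathrm{A}}\mathrm{M}\hat{k}_x',\hat{k}_x'\rangle_\mathrm{A}$, evaluated at $x$, and $\langle \mathrm{MM}^{\dag_\mathrm{A}}\hat{k}_y',\hat{k}_y'\rangle_\mathrm{A}$, evaluated at $y$.

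First, expand the square as
\[
|\langle \mathrm{M}\hat{k}_x',\hat{k}_y'\rangle_\mathrm{A}|^2+|\langle \mathrm{N}\hat{k}_x',\hat{k}_y'\rangle_\mathrm{A}|^2+2|\langle \mathrm{M}\hat{k}_x',\hat{k}_y'\rangle_\mathrm{A}||\langle \mathrm{N}\hat{k}_x',\hat{k}_y'\rangle_\mathrm{A}|.
\]
Write $a=\langle \mathrm{M}^{\#_\mathrm{A}}\mathrm{M}\hat{k}_x',\hat{k}_x'\rangle_\mathrm{A}$, $b=\langle \mathrm{N}^{\#_\mathrm{A}}\mathrm{N}\hat{k}_x',\hat{k}_x'\rangle_\mathrm{A}$, $c=\langle \mathrm{MM}^{\dag_\mathrm{A}}\hat{k}_y',\hat{k}_y'\rangle_\mathrm{A}$ and $d=\langle \mathrm{NN}^{\dag_\mathrm{A}}\hat{k}_y',\hat{k}_y'\rangle_\mathrm{A}$; all four are nonnegative. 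By Lemma \ref{Gen-Moore-Pen}, the first two terms are at most $ac+bd$. The elementary inequality $ac+bd\le\sqrt{(a^2+b^2)(c^2+d^2)}$ together with $\sqrt{a^2+b^2}=|a+ib|$ gives
\[
ac+bd\le |\langle(\mathrm{M}^{\#_\mathrm{A}}\mathrm{M}+i\mathrm{N}^{\#_\mathrm{A}}\mathrm{N})\hat{k}_x',\hat{k}_x'\rangle_\mathrm{A}|\,|\langle(\mathrm{MM}^{\dag_\mathrm{A}}+i\mathrm{NN}^{\dag_\mathrm{A}})\hat{k}_y',\hat{k}_y'\rangle_\mathrm{A}|.
\]
This is bounded by the product of the two $\mathrm{A}$-Berezin numbers, which is the first term of the statement.

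Second, handle the cross term. Write it as $2|\langle \hat{k}_y',\mathrm{M}\hat{k}_x'\rangle_\mathrm{A}\langle \mathrm{N}\hat{k}_x',\hat{k}_y'\rangle_\mathrm{A}|$ and apply Lemma \ref{Gen-Buzano} with $w=\hat{k}_y'$, $u=\mathrm{N}\hat{k}_x'$, $v=\mathrm{M}\hat{k}_x'$. This gives the bound
\[
|\langle \mathrm{N}\hat{k}_x',\mathrm{M}\hat{k}_x'\rangle_\mathrm{A}|+\|\mathrm{M}\hat{k}_x'\|_\mathrm{A}\|\mathrm{N}\hat{k}_x'\|_\mathrm{A}.
\]
Both pieces are now at the single point $x$. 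The first equals $|\langle \mathrm{M}^{\#_\mathrm{A}}\mathrm{N}\hat{k}_x',\hat{k}_x'\rangle_\mathrm{A}|$, which is the conjugate of $\langle \mathrm{N}^{\#_\mathrm{A}}\mathrm{M}\hat{k}_x',\hat{k}_x'\rangle_\mathrm{A}$ by the $\mathrm{A}$-adjoint relation, so it is at most $\mathbf{ber}_\mathrm{A}(\mathrm{N}^{\#_\mathrm{A}}\mathrm{M})$. For the second piece, the AM–GM inequality gives
\[
\|\mathrm{M}\hat{k}_x'\|_\mathrm{A}\|\mathrm{N}\hat{k}_x'\|_\mathrm{A}\le \tfrac12\langle(\mathrm{M}^{\#_\mathrm{A}}\mathrm{M}+\mathrm{N}^{\#_\mathrm{A}}\mathrm{N})\hat{k}_x',\hat{k}_x'\rangle_\mathrm{A}\le\tfrac12\mathbf{ber}_\mathrm{A}(\mathrm{M}^{\#_\mathrm{A}}\mathrm{M}+\mathrm{N}^{\#_\mathrm{A}}\mathrm{N}).
\]

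Summing the pieces and taking the supremum over $x,y\in X$ gives the stated inequality. The step needing the most care is the first one: choosing the pairing with $u=\hat{k}_x'$ and $v=\hat{k}_y'$ in Lemma \ref{Gen-Moore-Pen}, so that the $\#_\mathrm{A}$-terms sit at $x$ and the $\dag_\mathrm{A}$-terms at $y$, and then recombining $ac+bd$ through the Cauchy–Schwarz inequality in $\mathbb{R}^2$ into the complex combinations $\mathrm{M}^{\#_\mathrm{A}}\mathrm{M}+i\mathrm{N}^{\#_\mathrm{A}}\mathrm{N}$ and $\mathrm{MM}^{\dag_\mathrm{A}}+i\mathrm{NN}^{\dag_\mathrm{A}}$. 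A further point to verify is the identity $\langle \mathrm{N}u,\mathrm{M}u\rangle_\mathrm{A}=\langle \mathrm{M}^{\#_\mathrm{A}}\mathrm{N}u,u\rangle_\mathrm{A}$, which holds because $\mathrm{M}\in\mathcal{B}_\mathrm{A}(\mathcal{H})$.
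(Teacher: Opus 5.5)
Your proof is correct and follows essentially the same route as the paper. The paper likewise applies Lemma \ref{Gen-Moore-Pen} with $u=\hat{k}_x'$, $v=\hat{k}_y'$ to the squared terms and recombines $ac+bd$ via the $\mathbb{R}^2$ Cauchy--Schwarz inequality into $|a+ib|\,|c+id|$; it then applies Lemma \ref{Gen-Buzano} with $w=\hat{k}_y'$ to the cross term and AM--GM to $\|\mathrm{M}\hat{k}_x'\|_\mathrm{A}\|\mathrm{N}\hat{k}_x'\|_\mathrm{A}$. The only cosmetic difference is that you reach $\mathbf{ber}_\mathrm{A}(\mathrm{N}^{\#_\mathrm{A}}\mathrm{M})$ through $\mathrm{M}^{\#_\mathrm{A}}\mathrm{N}$ and conjugation, which gives the same modulus.
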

\begin{proof}
Let $\hat{k}_x'$ and $\hat{k}_y'$ be $\mathrm{A}$-normalized reproducing kernels in $\mathcal{H}$. Then
    \begin{eqnarray*}
        &&|\langle (\mathrm{M}+\mathrm{N})\hat{k}_x',\hat{k}_y'\rangle_\mathrm{A}|^2\\
        &=&|\langle \mathrm{M}\hat{k}_x',\hat{k}_y'\rangle_\mathrm{A} + \langle \mathrm{N}\hat{k}_x',\hat{k}_y'\rangle_\mathrm{A}|^2\\
        &\leq&(|\langle \mathrm{M}\hat{k}_x',\hat{k}_y'\rangle_\mathrm{A}|+|\langle \mathrm{N}\hat{k}_x',\hat{k}_y'\rangle_\mathrm{A}|)^2 \\
        &=&|\langle \mathrm{M}\hat{k}_x',\hat{k}_y'\rangle_\mathrm{A}|^2+|\langle \mathrm{N}\hat{k}_x',\hat{k}_y'\rangle_\mathrm{A}|^2+2|\langle \mathrm{M}\hat{k}_x',\hat{k}_y'\rangle_\mathrm{A}||\langle \mathrm{N}\hat{k}_x',\hat{k}_y'\rangle_\mathrm{A}|\\
        &\leq&\langle \mathrm{M}^{\#_\mathrm{A}}\mathrm{M}\hat{k}_x',\hat{k}_x'\rangle_\mathrm{A}\langle\mathrm{MM}^{\dag_\mathrm{A}}\hat{k}_y',\hat{k}_y'\rangle_\mathrm{A}+\langle\mathrm{N}^{\#_\mathrm{A}}\mathrm{N}\hat{k}_x',\hat{k}_x'\rangle_\mathrm{A}\langle \mathrm{NN}^{\dag_\mathrm{A}}\hat{k}_y',\hat{k}_y'\rangle_\mathrm{A}\\
        &&+2|\langle \mathrm{M}\hat{k}_x',\hat{k}_y'\rangle_\mathrm{A}||\langle \hat{k}_y', \mathrm{N}\hat{k}_x'\rangle_\mathrm{A}|(\text{by Lemma}~\ref{Gen-Moore-Pen})\\
        &\leq&\langle \mathrm{M}^{\#_\mathrm{A}}\mathrm{M}\hat{k}_x',\hat{k}_x'\rangle_\mathrm{A}\langle\mathrm{MM}^{\dag_\mathrm{A}}\hat{k}_y',\hat{k}_y'\rangle_\mathrm{A}+\langle\mathrm{N}^{\#_\mathrm{A}}\mathrm{N}\hat{k}_x',\hat{k}_x'\rangle_\mathrm{A}\langle \mathrm{NN}^{\dag_\mathrm{A}}\hat{k}_y',\hat{k}_y'\rangle_\mathrm{A}\\
        &&+\langle \mathrm{M}\hat{k}_x', \mathrm{M}\hat{k}_x'\rangle_\mathrm{A}^\frac{1}{2}\langle\mathrm{N}\hat{k}_x', \mathrm{N}\hat{k}_x'\rangle_\mathrm{A}^\frac{1}{2}+|\langle\mathrm{M}\hat{k}_x', \mathrm{N}\hat{k}_x'\rangle_\mathrm{A}|(\text{by Lemma}~\ref{Gen-Buzano})\\
        &\leq& \sqrt{\langle \mathrm{M}^{\#_\mathrm{A}}\mathrm{M}\hat{k}_x',\hat{k}_x'\rangle_\mathrm{A}^2+\langle \mathrm{N}^{\#_\mathrm{A}}\mathrm{N}\hat{k}_x',\hat{k}_x'\rangle_\mathrm{A}^2}\sqrt{\langle\mathrm{MM}^{\dag_\mathrm{A}}\hat{k}_y',\hat{k}_y'\rangle_\mathrm{A}^2+\langle\mathrm{NN}^{\dag_\mathrm{A}}\hat{k}_y',\hat{k}_y'\rangle_\mathrm{A}^2}\\
        &&+\langle \mathrm{M}^{\#_\mathrm{A}}\mathrm{M}\hat{k}_x', \hat{k}_x'\rangle_\mathrm{A}^\frac{1}{2}\langle \mathrm{N}^{\#_\mathrm{A}}\mathrm{N}\hat{k}_x', \hat{k}_x'\rangle_\mathrm{A}^\frac{1}{2}+|\langle \mathrm{N}^{\#_\mathrm{A}}\mathrm{M}\hat{k}_x', \hat{k}_x'\rangle_\mathrm{A}|
    \end{eqnarray*}
using the inequality $ac+bd\leq\sqrt{a^2+b^2}\sqrt{c^2+d^2}$ for all $a,b,c,d\in\mathbb{R}$. Now, by the arithmetic-geometric mean inequality, we have 
    \begin{eqnarray*}
        &&|\langle (\mathrm{M}+\mathrm{N})\hat{k}_x',\hat{k}_y'\rangle_\mathrm{A}|^2\\
        &\leq& \sqrt{\langle \mathrm{M}^{\#_\mathrm{A}}\mathrm{M} \hat{k}_x',\hat{k}_x'
        \rangle_\mathrm{A}^2+\langle \mathrm{N}^{\#_\mathrm{A}}\mathrm{N} \hat{k}_x',\hat{k}_x'
        \rangle_\mathrm{A}^2}\sqrt{\langle \mathrm{MM}^{\dag_\mathrm{A}} \hat{k}_y',\hat{k}_y'
        \rangle_\mathrm{A}^2+  \langle \mathrm{NN}^{\dag_\mathrm{A}} \hat{k}_y',\hat{k}_y'
        \rangle_\mathrm{A}^2}\\
        &&+\frac{1}{2}(\langle \mathrm{M}^{\#_\mathrm{A}}\mathrm{M}\hat{k}_x', \hat{k}_x'\rangle_\mathrm{A}+\langle \mathrm{N}^{\#_\mathrm{A}}\mathrm{N}\hat{k}_x', \hat{k}_x'\rangle_\mathrm{A})+|\langle \mathrm{N}^{\#_\mathrm{A}}\mathrm{M}\hat{k}_x', \hat{k}_x'\rangle_\mathrm{A}|\\
        &=& |\langle \mathrm{M}^{\#_\mathrm{A}}\mathrm{M} \hat{k}_x',\hat{k}_x'
        \rangle_\mathrm{A}+i\langle \mathrm{N}^{\#_\mathrm{A}}\mathrm{N} \hat{k}_x',\hat{k}_x'
        \rangle_\mathrm{A}||\langle \mathrm{MM}^{\dag_\mathrm{A}} \hat{k}_y',\hat{k}_y'
        \rangle_\mathrm{A}+i\langle \mathrm{NN}^{\dag_\mathrm{A}} \hat{k}_y',\hat{k}_y'
        \rangle_\mathrm{A}|
        \\&&+\frac{1}{2}\langle(\mathrm{M}^{\#_\mathrm{A}}\mathrm{M}+\mathrm{N}^{\#_\mathrm{A}}\mathrm{N})\hat{k}_x',\hat{k}_x'\rangle_\mathrm{A}+|\langle(\mathrm{N}^{\#_\mathrm{A}}\mathrm{M})\hat{k}_x',\hat{k}_x'\rangle_\mathrm{A}|\\
        &\leq&\textbf{ber}_\mathrm{A}(\mathrm{M}^{\#_\mathrm{A}}\mathrm{M}+i\mathrm{N}^{\#_\mathrm{A}}\mathrm{N})\textbf{ber}_\mathrm{A}(\mathrm{MM}^{\dag_\mathrm{A}}+i\mathrm{NN}^{\dag_\mathrm{A}})+\frac{1}{2}\textbf{ber}_\mathrm{A}(\mathrm{M}^{\#_\mathrm{A}}\mathrm{M}+\mathrm{N}^{\#_\mathrm{A}}\mathrm{N})\\
        &&+\textbf{ber}_\mathrm{A}(\mathrm{N}^{\#_\mathrm{A}}\mathrm{M}).
    \end{eqnarray*}
Taking the supremum over all $x,y\in X$, we get our required result.
     \end{proof}
     
\begin{remark}
    In Theorem $4.4$ of \cite{Conde-Berezin}, it is given that 
    \begin{equation*}
     \|\mathrm{M}+\mathrm{N}\|_{\mathbf{ber}_\mathrm{A}}^2\leq \|\mathrm{M}\|_{\mathbf{ber}_\mathrm{A}}^2+\|\mathrm{N}\|_{\mathbf{ber}_\mathrm{A}}^2+\frac{1}{2}\textbf{ber}_\mathrm{A}(\mathrm{M}^{\#_\mathrm{A}}\mathrm{M}+\mathrm{N}^{\#_\mathrm{A}}\mathrm{N})+\textbf{ber}_\mathrm{A}(\mathrm{M}^{\#_\mathrm{A}}\mathrm{N}).   
    \end{equation*}
   Take $\mathrm{A}=\begin{bmatrix}
        \frac{1}{3}&0\\
        0&\frac{1}{5}
    \end{bmatrix},\ \mathrm{M}=\begin{bmatrix}
        1&1\\
        0&0
    \end{bmatrix},\ \mathrm{N}=\begin{bmatrix}
        1&0\\
        1&0
    \end{bmatrix}$. Then the above inequality gives, $\|\mathrm{M}+\mathrm{N}\|_{\mathbf{ber}_\mathrm{A}}^2\leq 4.967$ whereas from Theorem \ref{anorm}, we get $ \|\mathrm{M}+\mathrm{N}\|_{\mathbf{ber}_\mathrm{A}}^2\leq 4.525$.
\end{remark}

Now, taking $\mathrm{A}=\mathrm{I}$, we have the following result.

\begin{corollary}\label{bernorm2}
  Let $\mathrm{M,N}\in\mathcal{CR(H)}$. Then
         \begin{eqnarray*}
    \left\|\mathrm{M}+\mathrm{N}\right\|_\mathbf{ber}^2&\leq& \mathbf{ber}(|\mathrm{M}|^2+i|\mathrm{N}|^2)\mathbf{ber}(\mathrm{MM}^\dag+i\mathrm{NN}^\dag)+\frac{1}{2}\left\||\mathrm{M}|^2+|\mathrm{N}|^2\right\|_{\mathbf{ber}}\\
    &&+\mathbf{ber}(\mathrm{N}^*\mathrm{M}).
\end{eqnarray*}
\end{corollary}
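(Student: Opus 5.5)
The plan is to specialize Theorem \ref{anorm} to $\mathrm{A}=\mathrm{I}$ and then simplify each term on the right-hand side. When $\mathrm{A}=\mathrm{I}$ the hypotheses become trivial. Every operator lies in $\mathcal{B}_\mathrm{I}(\mathcal{H})=\mathcal{B(H)}$. For any closed subspace $V$, the pair $(\mathrm{I},V)$ is compatible, since $V\oplus V^{\perp}=\mathcal{H}$. Hence for $\mathrm{M},\mathrm{N}\in\mathcal{CR(H)}$ all four compatibility conditions hold automatically.

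Next I would identify the objects that appear.
\begin{itemize}
\item The $\mathrm{I}$-adjoint is the usual adjoint: $\mathrm{T}^{\#_\mathrm{I}}=\mathrm{I}^\dag\mathrm{T}^*\mathrm{I}=\mathrm{T}^*$. So $\mathrm{M}^{\#_\mathrm{A}}\mathrm{M}=|\mathrm{M}|^2$, $\mathrm{N}^{\#_\mathrm{A}}\mathrm{N}=|\mathrm{N}|^2$ and $\mathrm{N}^{\#_\mathrm{A}}\mathrm{M}=\mathrm{N}^*\mathrm{M}$.
\item The $\mathrm{I}$-generalized inverse satisfies exactly the four Penrose equations, so $\mathrm{T}^{\dag_\mathrm{I}}=\mathrm{T}^\dag$ by uniqueness of the Moore--Penrose inverse.
\item $\hat{k}_x'=\hat{k}_x$, and $\mathbf{ber}_\mathrm{I}$, $\|\cdot\|_{\mathbf{ber}_\mathrm{I}}$ reduce to $\mathbf{ber}$, $\|\cdot\|_\mathbf{ber}$.
\end{itemize}
Substituting these into Theorem \ref{anorm} gives
$$\|\mathrm{M}+\mathrm{N}\|_\mathbf{ber}^2\leq \mathbf{ber}(|\mathrm{M}|^2+i|\mathrm{N}|^2)\,\mathbf{ber}(\mathrm{MM}^\dag+i\mathrm{NN}^\dag)+\tfrac{1}{2}\mathbf{ber}(|\mathrm{M}|^2+|\mathrm{N}|^2)+\mathbf{ber}(\mathrm{N}^*\mathrm{M}).$$

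The only remaining step is to rewrite the middle term as a Berezin norm. The operator $|\mathrm{M}|^2+|\mathrm{N}|^2$ is positive. The introduction recalls (from \cite{bhunia2023inequalities}) that $\|\mathrm{T}\|_\mathbf{ber}=\mathbf{ber}(\mathrm{T})$ for every $\mathrm{T}\in\mathcal{B(H)}^+$. Therefore $\mathbf{ber}(|\mathrm{M}|^2+|\mathrm{N}|^2)=\||\mathrm{M}|^2+|\mathrm{N}|^2\|_\mathbf{ber}$, which yields the stated inequality.

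I expect no real obstacle. The one point needing a brief justification is the identification $\mathrm{T}^{\dag_\mathrm{I}}=\mathrm{T}^\dag$, which the paper already notes right after the definition of the $\mathrm{A}$-generalized inverse.
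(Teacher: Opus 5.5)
Your proposal is correct and follows the paper's route: the corollary is obtained by setting $\mathrm{A}=\mathrm{I}$ in Theorem \ref{anorm}, where the compatibility hypotheses hold automatically and one has $\mathrm{T}^{\#_\mathrm{I}}=\mathrm{T}^*$ and $\mathrm{T}^{\dag_\mathrm{I}}=\mathrm{T}^\dag$. Your extra observation that $\mathbf{ber}(|\mathrm{M}|^2+|\mathrm{N}|^2)=\||\mathrm{M}|^2+|\mathrm{N}|^2\|_{\mathbf{ber}}$, because the operator is positive, is exactly the step the paper leaves unstated when it writes the middle term as a Berezin norm.
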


\begin{remark}
    In $(2.8)$ of \cite{bhunia2023inequalities}, it is given that
\begin{eqnarray}\label{bhunia2.8}
    \|\mathrm{M}+\mathrm{N}\|^2_\textbf{ber}&\leq&2~\textbf{ber}(\mathrm{M}^*\mathrm{M}+\mathrm{N}^*\mathrm{N}).
\end{eqnarray}
    Let $\mathrm{M}=\begin{bmatrix}
       1&1\\
        0&0
    \end{bmatrix}$ and $\mathrm{N}=\begin{bmatrix}
         1&0\\
        1&0
    \end{bmatrix}$. Then from \eqref{bhunia2.8}, we get 
    $\|\mathrm{M}+\mathrm{N}\|^2_\textbf{ber}\leq 6$. Also from Corollary \ref{bernorm2}, we obtain $\|\mathrm{M}+\mathrm{N}\|^2_\textbf{ber}\leq 5$. This shows that Corollary \ref{bernorm2} gives a better bound than \eqref{bhunia2.8}.
\end{remark}

\begin{corollary}\label{matrixmorm}
          Let $\mathrm{S, T, P, Q}$ be $n\times n$ matrices. Then
          \begin{eqnarray*}
              &&\left\|\begin{bmatrix}
              \mathrm{S}&\mathrm{P}\\
              \mathrm{Q}&\mathrm{T}
          \end{bmatrix}\right\|^2_\mathbf{ber}\\
          &\leq&\max\left\{\mathbf{ber}(|\mathrm{S}|^2+i|\mathrm{Q}|^2),\mathbf{ber}(|\mathrm{T}|^2+i|\mathrm{P}|^2)\right\}\\&&\max\left\{\mathbf{ber}(\mathrm{S}(|\mathrm{S}|^2)^\dag \mathrm{S}^*+i \mathrm{P}(|\mathrm{P}|^2)^\dag \mathrm{P}^*),\mathbf{ber}(\mathrm{T}(|\mathrm{T}|^2)^\dag \mathrm{T}^*+i\mathrm{Q}(|\mathrm{Q}|^2)^\dag \mathrm{Q}^*)\right\}\\
          &&+\frac{1}{2}\max\left\{\left\||\mathrm{S}|^2+|\mathrm{Q}|^2\right\|_{\mathbf{ber}},\left\||\mathrm{P}|^2+|\mathrm{T}|^2\right\|_{\mathbf{ber}}\right\}+\frac{1}{2}(\|\mathrm{Q}^*\mathrm{T}\|+\|\mathrm{P}^*\mathrm{S}\|).
          \end{eqnarray*}
      \end{corollary}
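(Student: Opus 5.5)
We decompose $\begin{bmatrix}\mathrm{S}&\mathrm{P}\\ \mathrm{Q}&\mathrm{T}\end{bmatrix}=\mathrm{M}+\mathrm{N}$ differently from Corollary \ref{ber2ABCD}: take
\[
\mathrm{M}=\begin{bmatrix}\mathrm{S}&0\\ \mathrm{Q}&0\end{bmatrix},\qquad \mathrm{N}=\begin{bmatrix}0&\mathrm{P}\\ 0&\mathrm{T}\end{bmatrix},
\]
and apply Corollary \ref{bernorm2}. Matrices have closed range, so the hypotheses hold. It then remains to compute each ingredient blockwise.

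\textbf{The operators $|\mathrm{M}|^2$, $|\mathrm{N}|^2$ and $\mathrm{N}^*\mathrm{M}$.} Direct multiplication gives
\[
|\mathrm{M}|^2=\begin{bmatrix}|\mathrm{S}|^2+|\mathrm{Q}|^2&0\\0&0\end{bmatrix},\quad |\mathrm{N}|^2=\begin{bmatrix}0&0\\0&|\mathrm{P}|^2+|\mathrm{T}|^2\end{bmatrix},\quad \mathrm{N}^*\mathrm{M}=\begin{bmatrix}0&0\\ \mathrm{P}^*\mathrm{S}+\mathrm{T}^*\mathrm{Q}&0\end{bmatrix}.
\]
Hence $|\mathrm{M}|^2+|\mathrm{N}|^2$ is block diagonal, and Corollary \ref{ber norm lemma} bounds its Berezin norm by $\max\{\||\mathrm{S}|^2+|\mathrm{Q}|^2\|_{\mathbf{ber}},\||\mathrm{P}|^2+|\mathrm{T}|^2\|_{\mathbf{ber}}\}$. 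For $\mathbf{ber}(\mathrm{N}^*\mathrm{M})$ we use Lemma \ref{matrixber} (off-diagonal case, with zero $(1,2)$ block) to get $\le\frac12\|\mathrm{P}^*\mathrm{S}+\mathrm{T}^*\mathrm{Q}\|\le\frac12(\|\mathrm{P}^*\mathrm{S}\|+\|\mathrm{Q}^*\mathrm{T}\|)$, using $\|\mathrm{T}^*\mathrm{Q}\|=\|\mathrm{Q}^*\mathrm{T}\|$. This gives the last term of the statement.

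\textbf{The first product term.} For $\mathbf{ber}(|\mathrm{M}|^2+i|\mathrm{N}|^2)$, the operator is $\mathrm{diag}(|\mathrm{S}|^2+|\mathrm{Q}|^2,\ i(|\mathrm{P}|^2+|\mathrm{T}|^2))$. This does not literally match the stated $\max\{\mathbf{ber}(|\mathrm{S}|^2+i|\mathrm{Q}|^2),\mathbf{ber}(|\mathrm{T}|^2+i|\mathrm{P}|^2)\}$, which suggests the intended proof bypasses Corollary \ref{bernorm2} at this step. Instead we redo the argument of Theorem \ref{anorm} with $\mathrm{A}=\mathrm{I}$, but pair terms differently in the step $ac+bd\le\sqrt{a^2+b^2}\sqrt{c^2+d^2}$. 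We expand $|\langle(\mathrm{M}+\mathrm{N})\hat k_x,\hat k_y\rangle|$ into the four block contributions with $\mathrm{S},\mathrm{Q}$ acting on the first coordinate and $\mathrm{P},\mathrm{T}$ on the second. Applying Lemma \ref{Moore-Penrose inverse} to each block separately gives factors $\langle|\mathrm{S}|^2\cdot,\cdot\rangle\langle \mathrm{SS}^\dag\cdot,\cdot\rangle$, and so on. Pairing $(\mathrm{S},\mathrm{P})$ and $(\mathrm{Q},\mathrm{T})$ in the Cauchy--Schwarz step produces $|\langle(|\mathrm{S}|^2+i|\mathrm{Q}|^2)\cdot,\cdot\rangle|$ and $|\langle(\mathrm{SS}^\dag+i\mathrm{PP}^\dag)\cdot,\cdot\rangle|$. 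Finally we write $\mathrm{SS}^\dag=\mathrm{S}(|\mathrm{S}|^2)^\dag\mathrm{S}^*$, which is the identity $\mathrm{S}^\dag=(\mathrm{S}^*\mathrm{S})^\dag\mathrm{S}^*$ already used in \eqref{moor_pen_block_matr}.

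\textbf{Main obstacle.} The hardest part is this bookkeeping: choosing the pairing so that the products of the $x$-kernel and $y$-kernel factors assemble into exactly the two $\max$ expressions of the statement, with the coordinates of $\hat k_x,\hat k_y$ in $\mathcal{H}_1\oplus\mathcal{H}_2$ handled as in the proof of Lemma \ref{matrixber}. If this exact pairing fails, the fallback is to accept the diagonal form obtained from Corollary \ref{bernorm2} directly and then bound it through Lemma \ref{matrixber}.
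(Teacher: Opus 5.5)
There is a genuine gap, and it comes from your choice of splitting. With $\mathrm{M}=\begin{bmatrix}\mathrm{S}&0\\ \mathrm{Q}&0\end{bmatrix}$ and $\mathrm{N}=\begin{bmatrix}0&\mathrm{P}\\ 0&\mathrm{T}\end{bmatrix}$, your computations of $|\mathrm{M}|^2+|\mathrm{N}|^2$ and $\mathrm{N}^*\mathrm{M}$ are correct, and they do give the last two terms. However, Corollary \ref{bernorm2} then produces the wrong first product term, in both factors.

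\textbf{First factor.} It becomes $\mathbf{ber}(\mathrm{diag}(|\mathrm{S}|^2+|\mathrm{Q}|^2,\ i(|\mathrm{P}|^2+|\mathrm{T}|^2)))\le\max\{\mathbf{ber}(|\mathrm{S}|^2+|\mathrm{Q}|^2),\mathbf{ber}(|\mathrm{P}|^2+|\mathrm{T}|^2)\}$. Since $\sqrt{a^2+b^2}\le a+b$ for $a,b\ge 0$, this is never smaller than the stated $\max\{\mathbf{ber}(|\mathrm{S}|^2+i|\mathrm{Q}|^2),\mathbf{ber}(|\mathrm{T}|^2+i|\mathrm{P}|^2)\}$, and in general it is strictly larger.

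\textbf{Second factor.} For your $\mathrm{M}$ one has $\mathrm{M}^\dag=(\mathrm{M}^*\mathrm{M})^\dag\mathrm{M}^*$. So with $\mathrm{K}=|\mathrm{S}|^2+|\mathrm{Q}|^2$,
$\mathrm{MM}^\dag=\begin{bmatrix}\mathrm{SK}^\dag\mathrm{S}^*&\mathrm{SK}^\dag\mathrm{Q}^*\\ \mathrm{QK}^\dag\mathrm{S}^*&\mathrm{QK}^\dag\mathrm{Q}^*\end{bmatrix}$, and $\mathrm{NN}^\dag$ has the same full-block form. Hence $\mathrm{MM}^\dag+i\mathrm{NN}^\dag$ is not block diagonal, and Lemma \ref{matrixber} does not apply to the second factor at all. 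Your fallback therefore does not prove the statement.

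\textbf{The rescue plan.} Re-running the proof of Theorem \ref{anorm} on a four-block expansion is not carried out, and it is not clear it can be. Squaring a sum of four terms creates six cross terms, whereas Theorem \ref{anorm} controls exactly one cross term via Lemma \ref{Gen-Buzano}. You give no argument that the leftover terms reassemble into the stated maxima.

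\textbf{The fix.} Use the splitting you deliberately avoided, the one from Corollary \ref{ber2ABCD}: $\mathrm{M}=\mathrm{diag}(\mathrm{S},\mathrm{T})$ and $\mathrm{N}=\begin{bmatrix}0&\mathrm{P}\\ \mathrm{Q}&0\end{bmatrix}$. Then:
\begin{itemize}
\item $|\mathrm{M}|^2+i|\mathrm{N}|^2=\mathrm{diag}(|\mathrm{S}|^2+i|\mathrm{Q}|^2,\ |\mathrm{T}|^2+i|\mathrm{P}|^2)$.
\item $\mathrm{MM}^\dag=\mathrm{diag}(\mathrm{SS}^\dag,\mathrm{TT}^\dag)$, with $\mathrm{SS}^\dag=\mathrm{S}(|\mathrm{S}|^2)^\dag\mathrm{S}^*$.
\item By \eqref{moor_pen_block_matr}, $\mathrm{NN}^\dag=\mathrm{diag}(\mathrm{P}(|\mathrm{P}|^2)^\dag\mathrm{P}^*,\ \mathrm{Q}(|\mathrm{Q}|^2)^\dag\mathrm{Q}^*)$.
\item $|\mathrm{M}|^2+|\mathrm{N}|^2=\mathrm{diag}(|\mathrm{S}|^2+|\mathrm{Q}|^2,\ |\mathrm{T}|^2+|\mathrm{P}|^2)$.
\item $\mathrm{N}^*\mathrm{M}=\begin{bmatrix}0&\mathrm{Q}^*\mathrm{T}\\ \mathrm{P}^*\mathrm{S}&0\end{bmatrix}$.
\end{itemize}
Plug these into Corollary \ref{bernorm2}. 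Apply Lemma \ref{matrixber} to the two $\mathbf{ber}$ factors and to $\mathbf{ber}(\mathrm{N}^*\mathrm{M})\le\frac12(\|\mathrm{Q}^*\mathrm{T}\|+\|\mathrm{P}^*\mathrm{S}\|)$, and Corollary \ref{ber norm lemma} to the Berezin-norm term. This gives the statement term by term, which is exactly the paper's proof, with no need to reopen Theorem \ref{anorm}.
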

      \begin{proof}
          Let $\begin{bmatrix}
              \mathrm{S}&\mathrm{P}\\
              \mathrm{Q}&\mathrm{T}
          \end{bmatrix}=\mathrm{M}+\mathrm{N}$, where $\mathrm{M}=\begin{bmatrix}
              \mathrm{S}&0\\
              0&\mathrm{T}
          \end{bmatrix}$ and $\mathrm{N}=\begin{bmatrix}
              0&\mathrm{P}\\
              \mathrm{Q}&0
          \end{bmatrix}$. Then, using Corollaries \ref{ber norm lemma} and \ref{bernorm2}, Equation \eqref{moor_pen_block_matr} and Lemma \ref{matrixber}, we obtain the desired result.
      \end{proof}

\begin{remark}
In Theorem $2.22$ of \cite{bhunia2024berezin} it is given that 
 \begin{eqnarray}
   \left\|\begin{bmatrix}
         \mathrm{S}&\mathrm{P}\\
         \mathrm{Q}&\mathrm{T}
     \end{bmatrix}\right\|_\textbf{ber}^2&\leq&\max\{\textbf{ber}(|\mathrm{S}|+i|\mathrm{Q}|),\textbf{ber}(|\mathrm{T}|+i|\mathrm{P}|)\}\nonumber\\
     &&\max\{\textbf{ber}(|\mathrm{S}^*|+i|\mathrm{P}^*|),\textbf{ber}(|\mathrm{T}^*|+i|\mathrm{Q}^*|)\}\nonumber\\
     &&+\frac{1}{2}\max\{\||\mathrm{S}|^2+|\mathrm{Q}|^2\|_\textbf{ber},\||\mathrm{P}|^2+|\mathrm{T}|^2\|_\textbf{ber}\}\nonumber\\
     &&+\max\{\|\mathrm{Q}^*\mathrm{T}\|_\textbf{ber},\|\mathrm{P}^*\mathrm{S}\|_\textbf{ber}\}.\label{Normmatrix}
\end{eqnarray} 

 Now taking $\mathrm{S}=\mathrm{T}=\begin{bmatrix}
     \frac{1}{2}&0\\
     \frac{1}{2}&0
 \end{bmatrix}$ and $\mathrm{P}=\mathrm{Q}=\begin{bmatrix}
     1&1\\
     0&0
 \end{bmatrix}$, we get from \eqref{Normmatrix}, $\left\|\begin{bmatrix}
         \mathrm{S}&\mathrm{P}\\
         \mathrm{P}&\mathrm{S}
     \end{bmatrix}\right\|_\textbf{ber}^2\leq 2.7077$ whereas from Corollary \ref{matrixmorm} we get  $\left\|\begin{bmatrix}
         \mathrm{S}&\mathrm{P}\\
         \mathrm{P}&\mathrm{S}
     \end{bmatrix}\right\|_\textbf{ber}^2\leq 2.7071$. So in this case, Corollary \ref{matrixmorm} gives a better bound than that of \eqref{Normmatrix}.
\end{remark}

\textit{Acknowledgements.} 
Mr. Sumon Ghosh would like to thank UGC, Govt. of India, for the financial support (NTA Ref. No. 191620179494) in the form of a fellowship. Dr. S.  Saha Mondal was supported by Indian Institute of Engineering Science and Technology, Shibpur, West Bengal, India under the mentorship
of Dr. S. Ojha during the preparation and initial submission of the article. The authors gratefully acknowledge the reviewers for their constructive comments and suggestions, which have significantly enhanced the quality of the article.

\end{document}